\renewcommand{\restriction}{\mathord{\upharpoonright}}
\newcommand{\vertiii}[1]{{\left\vert\kern-0.25ex\left\vert\kern-0.25ex\left\vert #1\right\vert\kern-0.25ex\right\vert\kern-0.25ex\right\vert}}
\newtheorem{proposition}{Proposition}[section]
\newtheorem{lemma}{Lemma}[section]
 \newtheorem{remark}{Remark}[section]
\newtheorem {definition}{Definition}[section]
\newcommand\abs[1]{\left|#1\right|}
\let\div\undefined
\DeclareMathOperator{\div}{div}
\DeclareMathAlphabet{\pazocal}{OMS}{zplm}{m}{n}
\numberwithin{equation}{section}
\DeclareMathOperator{\arsinh}{arsinh}
\begin{document}
%\title{Reliable approximation methods for a class of nonlinear elliptic problems motivated by the Poisson-Boltzmann equation}
\title{Reliable numerical solution of a class of nonlinear elliptic problems generated by the Poisson-Boltzmann equation}
\author{J. Kraus\footnote{University of Duisburg-Essen, Germany}, S. Nakov\footnote{RICAM, Austrian Academy of Sciences},
S. Repin\footnote{University of Jyv\"askyl\"a, Finland; V. A.~Steklov Institute of Mathematics in St.~Petersburg, Russia}}
\date{}
\maketitle
\thispagestyle{empty}
\begin{abstract}
\noindent 
We consider a class of nonlinear elliptic problems associated with models in biophysics, which are described by the Poisson-Boltzmann equation (PBE).
We prove mathematical correctness of the problem, study a suitable class of approximations,
and deduce guaranteed and fully computable bounds of approximation errors. The latter goal is achieved by means of the approach suggested
in~\cite{Repin_2000} for convex variational problems. Moreover, we establish the error identity, which defines the error
measure natural for the considered class of problems and show that it yields computable majorants and minorants of
the global error as well as indicators of local errors that provide efficient adaptation of meshes. Theoretical results are
confirmed by a collection of numerical tests that includes problems on $2D$ and $3D$ Lipschitz domains.

%The Poisson-Boltzmann equation models the electrostatic interaction between bio molecules in a solution consisting of moving ions dissloved in generally high dielectric, such as water. It is widely used in biology and biophysics, for example for calculation of the excitonic couplings between two chromophores attached to both ends of a protein. Despite its wide acceptance, it was not known until 2007 if this equation always has a unique solution and to what functional space it belongs. The main purpose of this article is to examine/ prove the existence and uniqueness of a solution to the general PBE. 
 
\end{abstract}
\quad\quad\quad\textbf{Keywords}: Poisson-Boltzmann equation,
semilinear partial differential equations, existence and uniqueness of solutions, convergence of finite element approximations,  a priori error estimates, guaranteed and efficient a posteriori error bounds, error indicators, adaptive mesh refinement,  qualified and unqualified convergence.

%\pagebreak
%\setcounter{page}{1}
%\pagenumbering{roman}

%\newpage
%\tableofcontents
%\newpage
%\listoftables
%\addcontentsline{toc}{subsection}{List of Tables}
%\newpage
%\listoffigures
%\addcontentsline{toc}{subsection}{List of Figures}
%\newpage

%\listofsymbols
%\newpage
\setcounter{page}{1}
\pagenumbering{arabic}
\section{Introduction}
\subsection {Classical statement of the problem}
%Assumptions: $\Omega\subset \mathbb R^d$ is a bounded Lipschitz domain, $d=2,3$ and  $\Gamma\in C^{0,1}$ encloses a subdomain $\Omega_1$, which is not assumed to be a connected set, such that $\Gamma$ is not touching $\partial\Omega$ and $\Omega_2=\Omega\setminus\left(\Omega_1\cup\Gamma\right)$.\\

Let $\Omega\subset \mathbb R^d,d=2,3$ be a bounded domain with Lipschitz boundary $\partial\Omega$.
Henceforth we assume that  $\Omega$ contains an interior subdomain $\Omega_1$ with Lipschitz boundary
$\Gamma$. In general, $\Omega_1$ may consist of several disconnected parts (in this case all of them are
assumed to have Lipschitz continuous boundaries). We consider the following class of nonlinear elliptic equations
motivated by the Poisson-Boltzmann equation (PBE), which is widely used for computation of electrostatic
interactions in a system of biomolecules in ionic
solution~\cite{Sharp_Honig_1990, Fogolari_Brigo_Molinari_2002, Fogolari_Zuccato_Esposito_Viglino_1999}: 
\begin{subequations}\label{PBE_special_form_regular_nonlinear_part}
\begin{eqnarray}
-\nabla\cdot\left(\epsilon \nabla u\right)+k^2 \sinh(u+w)&=&l \quad\text{in } \Omega_1\cup \Omega_2, \label{PBE_special_form_regular_nonlinear_part_1}\\
	\left[u\right]_\Gamma&=&0,\label{PBE_special_form_regular_nonlinear_part_2}\\
	 \left[\epsilon\frac{\partial u}{\partial n}\right]_\Gamma&=&0, \label{PBE_special_form_regular_nonlinear_part_3}\\
	 u&=&0,\quad \text{on } \partial \Omega, \label{PBE_special_form_regular_nonlinear_part_4}
	  \end{eqnarray}
	 \end{subequations}
where $\Omega_2:=\Omega\setminus\left(\Omega_1\cup\Gamma\right)$, the coefficients
$\epsilon,k\in L^\infty(\Omega)$, $\epsilon_{\max}\ge\epsilon\ge \epsilon_{\min}>0$, $w$
is measurable, and $l\in L^2(\Omega)$. Typically, in biophysical applications, $\Omega_1$
is occupied by one or more macromolecules and $\Omega_2$ is occupied by a solution of
water and moving ions. The coefficients $\epsilon$ and $k$ represent the dielectric constant
and the modified Debye-Huckel parameter and $u$ is the dimensionless electrostatic potential.
Concerning the given functions $k$ and $w$, we can identify three main cases:
\begin{enumerate}[(a)]
\item \label{a} $k_{\max}\ge k(x)\ge k_{\min}>0$ in $\Omega$ and $w\in L^\infty(\Omega)$
\item \label{b} $k(x)\equiv 0$ in $\Omega_1$, $k_{\max}\ge k(x)\ge k_{\min}>0$ in $\Omega_2$ and $w\in L^\infty(\Omega_2)$
\item \label{c} $k(x)\equiv 0$ in $\Omega_2$, $k_{\max}\ge k(x)\ge k_{\min}>0$ in $\Omega_1$ and $w\in L^\infty(\Omega_1)$
\end{enumerate}
Throughout the paper, the major attention is paid to the case \ref{b}, which arrises when solving the PBE and is the
most interesting from the practical point of view. The cases \ref{a} and \ref{c} can be studied analogously
(with some rather obvious modifications). The case with nonhomogeneous Dirichlet boundary condition $u=g$
on $\partial \Omega$
can also be treated in this framework provided that the boundary
condition is defined as the trace of a function $g$ such that
 $g\in H^1(\Omega)\cap L^\infty(\Omega)$ and $\nabla g\in L^s(\Omega)$ with $s>\max\{2,d\}$.
 
The reliable and efficient solution of the nonlinear Poisson Boltzmann equation (PBE) for complex
geometries of the interior domain $\Omega_1$ (with Lipschitz boundary) and piecewise constant
dielectrics has important applications in biophysics and biochemistry, e.g., in modeling the effects
of water and ion screening on the potentials in and around soluble proteins, nucleic acids,
membranes, and polar molecules and ions, see \cite{Sharp_Honig_1990} and the references therein.
Although the solution of the linearized PBE, as in the linear Deybe-Huckel theory, often yields accurate
approximations \cite{Knapp_Schoberl_2014} certain mathematical models are valid only when based
on the nonlinear PBE.

Over the recent years adaptive finite element methods have proved to be an adequate technique
in the numerical solution of elliptic problems with local features due to point sources, heterogeneous
coefficients or nonsmooth boundaries or interfaces, see e.g., \cite{CC_Praetorius_2014, Praetorius_Zee_2016}
and also successfully used to solve the nonlinear PBE~\cite{Chen2006b,Holst2012}.
Adaptivity heavily relies on reliable and efficient error indicators that are typically developed in the
framework of a posteriori error control. While the theory of a posteriori error estimates for linear
elliptic partial differential equations is already well established and understood,
%%see, e.g.~\cite{CC_Verfuerth_1999, Repin_2008_book,Verfuerth_2013_book} and the references therein,
it is far less developed for nonlinear problems. A posteriori error analysis
based on functional estimates has already been successfully applied to variational nonlinear problems including
obstacle problems in~\cite{Repin_noninear_var_problems_2012, Repin_Valdman_2017}.
The accuracy verification approach taken in this work is also based on arguments
that are commonly used in duality theory and convex analysis and can be found, e.g.
in~\cite{Ekeland_Temam, Repin}.
Another important issue in the efficient solution of the nonlinear PBE is related to the fast solution
of the systems of nonlinear and finally linear algebraic equations that arise from--for instance
adaptive finite element--discretization. Multigrid methods may provide optimal or nearly optimal algorithms
in terms of computational complexity to perform this task, see, e.g., \cite{Oberoi_Allewell_1993},
a topic which is beyond the scope of this paper.

The main questions studied in the paper are related to the well posedness of Problem~\eqref{PBE_special_form_regular_nonlinear_part} and a posteriori error estimation of its numerical solution. We use a suitable weak formulation (Definition \ref{weak_formulation_for_the_class_of_nonlinear_problems}), where the nonlinearity does not satisfy any polynomial growth condition and consequently it does not induce a bounded mapping from $H_0^1(\Omega)$ to its dual $H^{-1}(\Omega)$. For this (more general) weak formulation we can guarantee the existence of a solution and prove its uniqueness using a result of Brezis and Browder \cite{Brezis_Browder_1978_One_Property_of_Sobolev_Spaces}. Additionally, in Propositio~\ref{Proposition_1}, we show that the solution is bounded (here \cite{Brezis_Browder_1978_One_Property_of_Sobolev_Spaces} 
is used again together with special test functions suggested in Stampacchia \cite{Stampacchia_1965, Kinderlehrer_Stampacchia}). Boundedness of the solution is
important and later used in the derivation of functional a posteriori error estimates. By applying the general framework from \cite{Repin_2000} and \cite{Repin} we derive guaranteed and computable bounds of the difference between the exact
solution and any function from the respective energy class in terms of the energy and combined energy norms (equations \eqref{Explicit_form_of_Error_Estimate_with_Forcing_Functionals} and \eqref{Upper_Bound_CEN_Error}). Moreover, we obtain an error equality \eqref{Explicit_form_of_Functional_error_equality} with respect to a certain measure for the error which is the sum of the usual combined energy norm $\vertiii{\nabla(v-u)}^2+\vertiii{y^*-p^*}_*^2$ and a nonlinear measure. In the case of a linear elliptic equation of the form $-\div(\epsilon\nabla u)+u=l$, this nonlinear measure reduces to $\|v-u\|_{L^2(\Omega)}^2+\|\div(y^*-p^*)\|_{L^2(\Omega)}^2$, where $v$ and $y^*$ are approximations to the exact solution $u$ and the exact flux $p^*=\epsilon\nabla u$. One advantage of the presented error estimate is that it is valid for any conforming approximations of $u$ and $\epsilon\nabla u$ and that it does not rely on Galerkin orthogonality or properties specific to the used numerical method. Another advantage is that only the mathematical structure of the problem is exploited and therefore no mesh dependent constants are present in the estimate. Majorants of the error not only give guaranteed bounds of  global (energy) error norms but also generate efficient error indicators (cf. \eqref{PBE_special_form_regular_nonlinear_part_1}, Figures \ref{Example_run26_2D_Function_f_200_Iso_Lines0000} and \ref{run26_2D_Th_Adapted_With_Functional_Error_Indicator0011}). Also, we derive a simple, but efficient lower bound for the error in the combined energy norm. Using only the error majorant, we obtain an analog of Cea's lemma which formes a basis  for  the a priori convergence analysis of finite element 
approximations for this class of semilinear problems. Finally, we present three numerical examples that verify the accuracy of error majorants and minorants and confirm efficiency of the error indicator in mesh adaptive procedures.

The outline of the paper is as follows. In Section~2, we recall some facts from the duality theory and general
a posteriori error estimation method for convex variational problems. 
Next, we briefly discuss correctness of Problem \eqref{PBE_special_form_regular_nonlinear_part} and prove
an a priori $L^\infty(\Omega)$ estimate for the solution $u$. In Section~3, we apply the abstract framework from
Section~2 and derive explicit forms of all the respective terms.
A special attention is paid to the general error identity that defines
a combined error measure natural for the considered class of problems.  At the end of Section~3, we prove
convergence of the conforming finite element method based on $P_1$ Lagrange elements. In Section~4, we
consider numerical examples in $2D$ and $3D$ and compare the results with solutions obtained by adaptive
mesh refinements based on different indicators. The last section includes a summary of the results and comments
on possible generalizations of the method to a wider class of nonlinear problems.

\section{Abstract framework}\label{Section_Abstract_Framework}
First, we briefly recall some results from the duality theory (\cite{Repin,Ekeland_Temam}). Consider a class of variational problems having the following common form:
\begin{align}\label{general_variational_problem}
&\text{Find } u\in V \text{ such that}\nonumber\\
&(P)\quad J(u)=\inf\limits_{v\in V}{J(v)}, \text{ where } J(v)=G(\Lambda v)+F(v).
\end{align}
Here, $V$, $Y$ are reflexive Banach spaces with the norms $\|.\|_V$ and $\|.\|_Y$, respectively, $F:V\to\mathbb {\overline R}$, $G:Y\rightarrow \mathbb {\overline R}$ are convex and proper functionals, and $\Lambda: V\to Y$ is a bounded linear operator. By $0_V$ we denote the zero element in $V$. It is assumed that $J$ is coercive and lower semicontinuous. In this case, Problem $(P)$ has a solution $u$, which is unique if $J$ is strictly convex.
 
The  spaces topologically dual to $V$ and $Y$ 
are denoted by $V^*$ and $Y^*$, respectively.
They are endowed  with the norms $\|.\|_{V^*}$ and $\|.\|_{Y^*}$. 
Henceforth, $\langle v^*,v\rangle$ denotes the duality product of $v^*\in V^*$ and $v\in V$. Analogously, $(y^*,y)$ is the duality product of $y^*\in Y^*$ and $y\in Y$.
 $\Lambda^*: Y^*\to V^*$ is the operator adjoint to $\Lambda$. It is defined by the relation
 \begin{align*}
\langle \Lambda^*y^*,v\rangle= (y^*,\Lambda v),\,\forall v\in V,\,\forall y^*\in Y^*.
 \end{align*}
We recall that a convex functional $J: V\rightarrow \mathbb{\overline  R}$ is called {\it uniformly convex} in a
ball $B(0_V,\delta)$ (see, e.g. \cite{Repin}) if there exists
a nonnegative proper and lower semicontinuous functional
$\Upsilon_\delta$, $\Upsilon_\delta:V\rightarrow \mathbb {\overline R}$ with $\Upsilon_\delta(v)=0$ iff $ v=0_V$
such that for all $v_1,v_2\in B(0_V,\delta)$ the following inequality holds:
\begin{align}\label{Defining_inequality_for_uniform_convexity}
J\left(\frac{v_1+v_2}{2}\right)+\Upsilon_\delta(v_1-v_2)\leq \frac{1}{2}\left(J(v_1)+J(v_2)\right).
\end{align}
The functional $\Upsilon_\delta$ enforces the standard midpoint convexity inequality and therefore is called
a {\it forcing} functional. 
\begin{remark}\label{Remark_forcing_functional}
In what follows, we will use the term forcing functional under slightly weaker conditions than usual hereby dropping
the requirement that $\Upsilon(v)=0$ implies that $v=0$.
%As we will see, $F$ and $F^*$ are not uniformly convex in the full sense of Definition %\ref{Def_uniformly_convex_functional} since  $\Upsilon_F$ and $\Upsilon_{F^*}$ 
\end{remark}

The functional $J^*: V^*\rightarrow \mathbb {\overline R}$ defined by the relation
\begin{align*}
J^*(v^*):= \sup\limits_{v\in V}{\{\langle v^*,v \rangle -J(v)\}}
\end{align*}
is called  {\it dual} (or Fenchel conjugate) conjugate to $J$ (see, e.g. \cite{Ekeland_Temam} ). In accordance with the general duality theory of the calculus of variations, the primal Problem \eqref{general_variational_problem} has a dual counterpart:
\begin{align}\label{variational_problem_for_pStar}
\begin{aligned}
&\text{Find }p^*\in Y^*\text{ such that}\\
&(P^*) \quad I^*(p^*)=\sup\limits_{y^*\in Y^*}{I^*(y^*)}, \text{ where } I^*(y^*):=-G^*(y^*)-F^*(-\Lambda^*y^*),
\end{aligned}
\end{align}
where $G^*$ and $F^*$ are the functionals conjugate to  $G$ and $F$, respectively. The problems $(P)$ and $(P^*)$ 
are generated by the Lagrangian  $L:V\times Y^*\to \mathbb {\overline R}$ defined by the relation
\begin{align*}
L(v,y^*)= (y^*,\Lambda v) -G^*(y^*)+F(v).
\end{align*}
If we additionally assume that $G^*$ is coercive and that $F(0_V)$ is finite, then it is well known that problems $(P)$ and $(P^*)$ have unique solutions $u\in V$ and $p^*\in Y^*$ and that strong duality relations hold (see \cite{Repin}, or Proposition 2.3, Remark 2.3, and Proposition 1.2 from chapter VI in \cite{Ekeland_Temam}):
\begin{align}\label{strong_duality_holds}
J(u)=\inf\limits_{v\in V}{J(v)}=\inf\limits_{v\in V}\sup\limits_{y^*\in Y^*}{L(v,y^*)}=\sup\limits_{y^*\in Y^*}\inf\limits_{v\in V}{L(v,y^*)}=\sup\limits_{y^*\in Y^*}{I^*(y^*)}=I^*(p^*).
\end{align}
Furthermore, the pair $(u,p^*)$ is a saddle point for the Lagrangian $L$, i.e.,
\begin{align}\label{saddle_point_inequalities}
L(u,y^*)\leq L(u,p^*)\leq L(v,p^*),\,\forall v\in V,\,\forall y^*\in Y^*
\end{align} 
and $u$ and $p^*$ satisfy the relations 
%\begin{align}\label{relations_between_solutions_of_primal_dual_problems}
%\Lambda u\in \partial G^*(p^*) \\
%p^*\in \partial G(\Lambda u).
%\end{align}
\begin{align}\label{relations_between_solutions_of_primal_dual_problems}
\Lambda u&\in \partial G^*(p^*),\qquad p^*\in \partial G(\Lambda u).
\end{align}

Now let $\Upsilon_G,\Upsilon_{G^*},\Upsilon_F,\Upsilon_{F^*}$ be forcing functionals for $G,G^*,F,F^*$,
respectively (it is not required that all of them are nontrivial).

\noindent
Using the linearity of $\Lambda$, we find that
\begin{align*} 
&\mathrel{\phantom{=}}\Upsilon_G(\Lambda v-\Lambda u)\leq \frac{1}{2}G(\Lambda v)+\frac{1}{2}G(\Lambda u)-G\left(\frac{\Lambda v+\Lambda u}{2}\right)\\
&\leq \frac{1}{2}G(\Lambda v)+\frac{1}{2}G(\Lambda u)-G\left(\frac{\Lambda v+\Lambda u}{2}\right)+\frac{1}{2}F(v)+\frac{1}{2}F(u)-F\left(\frac{v+u}{2}\right)-\Upsilon_F(v-u)\\
&=\frac{1}{2}J(v)+\frac{1}{2}J(u)-J\left(\frac{v+u}{2}\right)-\Upsilon_F(v-u)\\
&\leq \frac{1}{2}J(v)-\frac{1}{2}J(u)-\Upsilon_F(v-u).
\end{align*}
Similarly,
\begin{align*}
\Upsilon_{G^*}(y^*-p^*)\leq \frac{1}{2}I^*(p^*)-\frac{1}{2}I^*(y^*)-\Upsilon_{F^*}(-\Lambda^*y^*+\Lambda^*p^*).
\end{align*}
Summing up the above two inequalities and noting that $J(u)=I^*(p^*)$ we obtain the principle error estimate (see \cite{Repin_2000, Repin})
\begin{align}\label{Func_a_posteriori_estimate_with_forcing_functionals}
&\mathrel{\phantom{=}}\underline{\Upsilon_G(\Lambda v-\Lambda u)+\Upsilon_{G^*}(y^*-p^*)+\Upsilon_F(v-u)+\Upsilon_{F^*}(-\Lambda^*y^*+\Lambda^*p^*)}\nonumber\\
&\leq \frac{1}{2}(J(v)-I^*(y^*))=\frac{1}{2}\left[G(\Lambda v)+F(v)+G^*(y^*)+F^*(-\Lambda^* y^*)\right]\nonumber\\
&=\underline{\frac{1}{2}\left[D_G(\Lambda v,y^*)+D_F(v,-\Lambda^*y^*)\right]}=:\frac{1}{2}M_\oplus^2(v,y^*),
\end{align}
where 
\begin{align*}
D_G(\Lambda v,y^*):=G(\Lambda v)+G^*(y^*)-\langle y^*,\Lambda v\rangle
\end{align*}
 and 
 \begin{align*}
 D_F(v,-\Lambda^*y^*):=F(v)+F^*(-\Lambda^*y^*)+\langle \Lambda^*y^*,v\rangle
 \end{align*}
 are the {\it compound} functionals for $G$ and $F$, respectively \cite{Repin}. 
 A compound functional is nonnegative by the definition. Moreover, the equality 
\begin{align}\label{Duality_Gap_Equals_MvyStar}
J(v)-I^*(y^*)=D_G(\Lambda v,y^*)+D_F(v,-\Lambda^*y^*)=M_\oplus^2(v,y^*),
\end{align}
shows that  $D_G$ and $D_F$ can vanish simultaneously  
if and only  $v=u$ and $y^*=p^*$. The relation \eqref{Func_a_posteriori_estimate_with_forcing_functionals} 
exposes the general form of  the a posteriori error estimate
of the functional type expressed in terms of forcing functionals. Moreover, setting $v:=u$ and $y^*:=p^*$ in \eqref{Duality_Gap_Equals_MvyStar}, we obtain analogous identities for the primal and dual parts of the error:
\begin{subequations}\label{primal_dual_parts_of_error}
\begin{align}
&J(u)-I^*(y^*)=M_\oplus^2(u,y^*)=D_G(\Lambda u,y^*)+D_F(u,-\Lambda^*y^*), \label{primal_dual_parts_of_error_1}\\
&J(v)-I^*(p^*)=M_\oplus^2(v,p^*)=D_G(\Lambda v,p^*)+D_F(v,-\Lambda^*p^*).\label{primal_dual_parts_of_error_2} 
\end{align}
\end{subequations}
Using the fact that $J(u)=I^*(p^*)$ and that the above equalities \eqref{primal_dual_parts_of_error_1}, \eqref{primal_dual_parts_of_error_2} hold, we obtain another important relation (see \cite{Repin})
\begin{align}\label{important_relation_satisfied_by_the_majorant_M}
&\mathrel{\phantom{=}}M_\oplus^2(v,y^*)=J(v)-I^*(y^*)\nonumber\\
&=J(v)-I^*(p^*)+J(u)-I^*(y^*)=M_\oplus^2(v,p^*)+M_\oplus^2(u,y^*).
\end{align}
Notice that $M_\oplus^2(v,y^*)$ depends on the approximations
$v$ and $y^*$ only and, therefore, is fully computable. The  right-hand side of \eqref{important_relation_satisfied_by_the_majorant_M} can be viewed as a certain measure of the distance between $(u,p^*)$ and $(v,y^*)$, which vanishes if and only if $v=u$ and $y^*=p^*$. Hence the relation 
\begin{align}\label{Functional_error_equality}
D_G(\Lambda v, p^*)+D_F(v,-\Lambda^*p^*)+D_G(\Lambda u,y^*)+D_F(u,-\Lambda^*y^*)=M_\oplus^2(v,y^*)
\end{align}
establishes the equality of the computable term $M_\oplus^2(v,y^*)$ and an error measure natural for this class of variational problems. 

It is worth noting  that the   identity (\ref{Functional_error_equality}) can be represented
in terms of norms if $G$ and $F$ are quadratic functionals. For example,  if $V=H_0^1(\Omega)$, $V^*=H^{-1}(\Omega)$, $Y=[L^2(\Omega)]^d=Y^*$, $G(\Lambda v)=G(\nabla v)=\int\limits_{\Omega}{\frac{1}{2}A\nabla v\cdot \nabla v dx}$ and $F(v)=\int\limits_{\Omega}{\left(\frac{1}{2}v^2-lv\right)dx}$
(where $A$ is a symmetric positive definite matrix with bounded entries), then
\begin{align}\label{Compound_Functionals_Quadratic_Case}
\begin{aligned}
 D_G(\Lambda v,p^*)&=\frac{1}{2}\int\limits_{\Omega}{A\nabla (v-u)\cdot \nabla (v-u) dx},\\
 D_F(v,-\Lambda^*p^*)&=\frac{1}{2}\|v-u\|_{L^2(\Omega)}^2,\\
 D_G(\Lambda u,y^*) &= \frac{1}{2}\int\limits_{\Omega}{A^{-1}(y^*-p^*)\cdot (y^*-p^*) dx},\\
 D_F(u,-\Lambda^*y^*)&=\frac{1}{2}\|\div (y^* - p^*)\|_{L^2(\Omega)}^2.
 \end{aligned}
 \end{align}
 In this case,  the minimizer of  \eqref{general_variational_problem} solves  the linear elliptic problem $-\div(A\nabla u) + u = l$ in $\Omega$ and \eqref{Functional_error_equality} is reduced to the error identity 
\begin{align}\label{Functional_error_equality_quadratic_case}
\begin{aligned}
\int\limits_{\Omega}{A\nabla (v-u)\cdot \nabla (v-u) dx}&+\int\limits_{\Omega}{A^{-1}(y^*-p^*)\cdot (y^*-p^*) dx}\\
&+\|v-u\|_{L^2(\Omega)}^2+\|\div (y^* - p^*)\|_{L^2(\Omega)}^2=2M_\oplus(v,y^*)^2.
\end{aligned}
\end{align}
The sum of the first and the third term in \eqref{Functional_error_equality_quadratic_case}  represents the primal, the sum of the second and fourth term the dual error.

\subsection{Variational form of the problem}
\begin{definition}\label{weak_formulation_for_the_class_of_nonlinear_problems}
We call $u$ a weak solution of \eqref{PBE_special_form_regular_nonlinear_part} if $u\in H_0^1(\Omega)$ and $u$
is such that $b(x,u+w)v\in L^1(\Omega)$ for any  $v\in H_0^1(\Omega)\cap L^\infty(\Omega)$ and 
\begin{align}\label{un_weak_formulation}
a(u,v)+\int_{\Omega}{b(x,u+w)v dx}=\int\limits_{\Omega}{lvdx}, \,  \forall v\in H_0^1(\Omega)\cap L^\infty(\Omega),
\end{align}
where $a(u,v)=\int\limits_{\Omega}{\epsilon\nabla u\cdot\nabla v dx}$ and $b(x,z):=k^2(x)\sinh(z)$.
\end{definition}

%see the example of Christian Remling, $f=\frac{1}{1-|x|}\notin L^1(B_1)$, but $\int_{B_1}{|fv|dx}\leq C\|v\|_{H^1(B_1)},\,\forall v\in H_0^1(B_1)$ )
The problem has the variational form \eqref{general_variational_problem} if we set $V=H_0^1(\Omega)$ and define $J:H_0^1(\Omega)\to\mathbb R\cup \{+\infty\}$
as follows:
\begin{align}\label{definition_of_J}
J(v):=\left\{
\begin{aligned}
&\int\limits_{\Omega}{\left[\frac{\epsilon(x)}{2}\abs{\nabla v}^2+k^2\cosh(v+w)-lv\right]dx},\text{ if }  k^2\cosh(v+w)\in L^1(\Omega_2),\\
&+\infty, \text{ if } k^2\cosh(v+w) \notin L^1(\Omega_2).
\end{aligned}
\right.
\end{align}
Using the Lebesgue dominated convergence theorem together with the fact that at the minimizer $u$ we have $\cosh(u+w)\in L^1(\Omega)$, it can be seen that the necessary condition for $u$ to be a minimizer of $J$ is
\begin{align*}
\int_{\Omega}{\epsilon \nabla u\cdot\nabla v dx}+\int_{\Omega}{k^2\sinh(u+w)vdx}=\int\limits_{\Omega}{lvdx},\,\forall v\in H_0^1(\Omega)\cap L^\infty(\Omega),
\end{align*}
which is exactly \eqref{un_weak_formulation}. Since $J(v)$ is strictly convex, coercive, and sequentially weakly lower semicontinuous (s.w.l.s) on $H_0^1(\Omega)$ it has a unique minimizer. We note that $J(v)$ is s.w.l.s. because the functional $\int\limits_{\Omega}{\left(\frac{\epsilon}{2}|\nabla v|^2-lv\right) dx}$ is convex and Gateaux differentiable and, therefore, s.w.l.s. over $H_0^1(\Omega)$  (see Corollary 2.4 in \cite{Showalter}). For $d=3$, the functional $\int\limits_{\Omega}{k^2(x)\cosh(v+w) dx}$ is not Gateaux differentiable. In view of Fatou's lemma and the compact embedding of $H_0^1(\Omega)$ into $L^2(\Omega)$ the functional $\int\limits_{\Omega}{k^2(x)\cosh(v+w) dx}$ is also s.w.l.s.. Uniqueness of the solution of \eqref{un_weak_formulation} follows from the monotonicity property of $b$, namely,
\begin{align*}
\int_{\Omega}{\left(b(x,v+w)-b(x,z+w)\right)\left(v-z\right)dx}\ge 0, \, \forall v,z\in H_0^1(\Omega).
\end{align*}
 If $u_1,u_2\in H_0^1(\Omega)$ are two different solutions of \eqref{un_weak_formulation}, then $$a(u_1-u_2,v)+\int_{\Omega}{\left(b(x,u_1+w)-b(x,u_2+w)\right)vdx}=0,\,\forall v\in H_0^1(\Omega)\cap L^\infty(\Omega).$$ Now, applying the theorem in \cite{Brezis_Browder_1978_One_Property_of_Sobolev_Spaces} to $b(x,u_1+w)-b(x,u_2+w)\in H^{-1}(\Omega)\cap L_{loc}^1(\Omega)$ and the function $v=u_1-u_2\in H_0^1(\Omega)$, we conclude that $$a(u_1-u_2,u_1-u_2)+\int_{\Omega}{\left(b(x,u_1+w)-b(x,u_2+w)\right)\left(u_1-u_2\right)dx}=0$$ and, consequently, $u_1=u_2$. We arrive at the following assertion:
\begin{proposition}\label{Proposition_1}
Problem \eqref{general_variational_problem} has a unique minimizer $u$, which coincides with the unique weak solution $u\in H_0^1(\Omega)$ of Problem \eqref{un_weak_formulation}.
\end{proposition}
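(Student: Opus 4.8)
The plan is to assemble the facts collected in the preceding discussion into three components: existence and uniqueness of a minimizer of $(P)$, the fact that this minimizer solves the weak formulation \eqref{un_weak_formulation}, and uniqueness of the weak solution; the identification in the statement then follows by combining them.

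First, I would establish existence of a minimizer by the direct method of the calculus of variations. Since $J$ is not identically $+\infty$ (for instance $J(0)<\infty$, using that $k^2\cosh(w)\in L^1(\Omega)$ in case \ref{b}, as $k\equiv 0$ on $\Omega_1$ and $w\in L^\infty(\Omega_2)$), the infimum is finite. Coercivity follows from $\epsilon\ge\epsilon_{\min}>0$ combined with the Poincar\'e inequality to absorb the linear term $-\int_\Omega lv\,dx$, while the $\cosh$ term is nonnegative; hence any minimizing sequence is bounded in $H_0^1(\Omega)$ and, by reflexivity, admits a weakly convergent subsequence. Lower semicontinuity along this subsequence is exactly the s.w.l.s.\ property recalled above: the quadratic-plus-linear part is weakly lower semicontinuous by convexity and G\^ateaux differentiability, and the $\cosh$ term is handled by passing to an a.e.\ convergent subsequence (via the compact embedding $H_0^1(\Omega)\hookrightarrow L^2(\Omega)$) and invoking Fatou's lemma. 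Uniqueness then comes from strict convexity of $J$, which is inherited from the strictly convex Dirichlet term since $v\mapsto\nabla v$ is injective on $H_0^1(\Omega)$.

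Second, I would derive the weak formulation as the Euler--Lagrange condition at the minimizer $u$. For a fixed test function $v\in H_0^1(\Omega)\cap L^\infty(\Omega)$ and $t\in\mathbb R$, I would examine the scalar map $t\mapsto J(u+tv)$. Because $v$ is bounded, the relevant difference quotients of $k^2\cosh(u+w+tv)$ are, by the mean value theorem and the addition formula for $\sinh$, dominated uniformly for $|t|\le 1$ by a multiple of $k^2\cosh(u+w)\in L^1(\Omega)$; the dominated convergence theorem then legitimizes differentiation under the integral sign even where $J$ itself fails to be G\^ateaux differentiable. Setting the derivative at $t=0$ to zero gives precisely \eqref{un_weak_formulation}. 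Conversely, convexity of $J$ ensures that any solution of \eqref{un_weak_formulation} is a global minimizer, so the two notions agree at the level of the Euler--Lagrange equation.

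Third, and this is where the genuine difficulty lies, I would prove uniqueness of the weak solution. The obstacle is that the natural test function $v=u_1-u_2$ for two weak solutions $u_1,u_2$ lies in $H_0^1(\Omega)$ but not necessarily in $L^\infty(\Omega)$, so it is \emph{not} admissible in \eqref{un_weak_formulation}, and the exponential growth of $\sinh$ supplies no polynomial bound placing $b(x,u_i+w)$ in $H^{-1}(\Omega)$ a priori. The remedy is the Brezis--Browder theorem \cite{Brezis_Browder_1978_One_Property_of_Sobolev_Spaces}: at the weak solutions the difference $b(x,u_1+w)-b(x,u_2+w)$ belongs to $H^{-1}(\Omega)\cap L^1_{loc}(\Omega)$, and the theorem guarantees that its $H^{-1}$--$H_0^1$ duality pairing with $u_1-u_2$ coincides with the Lebesgue integral $\int_\Omega\bigl(b(x,u_1+w)-b(x,u_2+w)\bigr)(u_1-u_2)\,dx$. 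Subtracting the two weak formulations and using this identification yields $a(u_1-u_2,u_1-u_2)+\int_\Omega\bigl(b(x,u_1+w)-b(x,u_2+w)\bigr)(u_1-u_2)\,dx=0$; since $b(x,\cdot)=k^2\sinh(\cdot)$ is monotone, both terms are nonnegative, forcing $\nabla(u_1-u_2)=0$ and hence $u_1=u_2$. Combining the three components, the unique minimizer of $(P)$ is a weak solution, and by uniqueness of the weak solution it is exactly that solution, which proves the assertion.
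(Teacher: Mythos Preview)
Your proposal is correct and follows essentially the same approach as the paper: existence and uniqueness of the minimizer via the direct method (coercivity from the Dirichlet term, s.w.l.s.\ from convexity plus Fatou's lemma with the compact embedding), derivation of the Euler--Lagrange equation by dominated convergence using $k^2\cosh(u+w)\in L^1(\Omega)$ and boundedness of the test function, and uniqueness of the weak solution via monotonicity of $b(x,\cdot)$ together with the Brezis--Browder theorem to justify testing with $u_1-u_2$. You even spell out a couple of steps (the domination in the Euler--Lagrange argument, the reason $u_1-u_2$ is not immediately admissible) more carefully than the paper does.
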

%%%%%% I stop corrections here %%%%%%%%%

Next, we show that the solution to Problem \eqref{un_weak_formulation} is essentially bounded.
To prove this, we need the following lemma (see \cite{Kinderlehrer_Stampacchia}).
\begin{lemma}\label{Lemma_B1}
Let $\varphi(t)$ denote a function which is nonnegative and nonincreasing for $s_0\leq t<\infty$. Further, let
\begin{align*}
\varphi(h)\leq C\frac{\varphi(s)^\beta}{(h-s)^\alpha},\,\forall h>s>s_0,
\end{align*}
where $C,\alpha$ are positive constants and $\beta>1$.
If $e \in  \mathbb R$ is defined by $e^\alpha:=C \varphi(s_0)^{\beta-1}2^{\frac{\alpha\beta}{\beta-1}}$,
then $\varphi(s_0+e)=0$.
\end{lemma}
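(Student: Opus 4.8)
The plan is to use the classical Stampacchia iteration argument: I would construct an increasing sequence of levels $t_k$ converging to $s_0+e$ and show that $\varphi(t_k)$ decays geometrically to $0$, and then invoke the monotonicity of $\varphi$ to conclude $\varphi(s_0+e)=0$. Concretely, I would set
\[
t_k := s_0 + e\left(1 - 2^{-k}\right), \qquad k = 0,1,2,\dots,
\]
so that $t_0 = s_0$, the sequence is strictly increasing with $t_k \uparrow s_0+e$, and the consecutive gaps are $t_{k+1}-t_k = e\,2^{-(k+1)}$.

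First I would establish geometric decay by induction, claiming $\varphi(t_k) \le \varphi(s_0)\, r^{-k}$ for the ratio $r := 2^{\alpha/(\beta-1)}$. The base case is immediate. For the inductive step I would apply the hypothesis with $h=t_{k+1}$ and $s=t_k$, obtaining
\[
\varphi(t_{k+1}) \le \frac{C\,\varphi(t_k)^\beta}{(t_{k+1}-t_k)^\alpha} = \frac{C\,2^{(k+1)\alpha}}{e^\alpha}\,\varphi(t_k)^\beta,
\]
and then substitute the inductive bound $\varphi(t_k)\le\varphi(s_0)r^{-k}$. The key observation is that with the choice $r = 2^{\alpha/(\beta-1)}$ one has $r^{\beta-1}=2^\alpha$, which makes the $k$-dependent factor $\left(2^\alpha/r^{\beta-1}\right)^k$ collapse to $1$; the surviving constant prefactor reduces to $1$ precisely when $e^\alpha = C\,\varphi(s_0)^{\beta-1}\,2^{\alpha\beta/(\beta-1)}$, that is, exactly the defining relation for $e$. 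Hence the induction closes and $\varphi(t_k) \le \varphi(s_0)\,2^{-\alpha k/(\beta-1)} \to 0$, since $\alpha>0$ and $\beta>1$.

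Finally I would pass to the limit: because $\varphi$ is nonincreasing and $t_k < s_0+e$, we have $0 \le \varphi(s_0+e) \le \varphi(t_k)$ for every $k$, and letting $k\to\infty$ forces $\varphi(s_0+e)=0$. The one technical point requiring care is the initial step $k=0$, where $s=t_0=s_0$ lies on the boundary of the admissible range $s>s_0$; I would justify the estimate at $s=s_0$ by applying the hypothesis for $s>s_0$ and letting $s\downarrow s_0$, using the monotonicity of $\varphi$ (so that its right limit satisfies $\varphi(s_0^+)\le\varphi(s_0)$) and the continuity of $(h-s)^\alpha$. The main obstacle is essentially bookkeeping rather than analysis: one must select the decay ratio $r$ so that the $k$-dependence cancels exactly and then verify that the prescribed value of $e$ is precisely the threshold at which the recursion becomes self-sustaining, so matching the exponents correctly is the crux of the argument.
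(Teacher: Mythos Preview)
Your argument is correct and is precisely the classical Stampacchia iteration found in the reference the paper cites (Kinderlehrer--Stampacchia); the paper itself does not supply a proof of this lemma but merely quotes it. Your handling of the boundary case $s=s_0$ via monotonicity and a limit is the right fix for the strict inequality in the hypothesis.
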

Now, we present the main result of this section.  
\begin{proposition}\label{Proposition_2}
The unique weak solution $u$ to Problem \eqref{un_weak_formulation} belongs to $L^\infty(\Omega)$. Moreover, there is a positive constant $\overline e>0$, depending only on $d$, $\Omega$, $\|l\|_{L^2(\Omega)}$, $\epsilon_{\min}$, such that $\|u\|_{L^\infty(\Omega)}\leq \|w\|_{L^\infty(\Omega_2)}+\overline e$. If $l=0$, then the constant $\overline e$ is equal to zero.
\end{proposition}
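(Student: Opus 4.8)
The plan is to establish essential boundedness of the weak solution $u$ via the Stampacchia truncation technique, reducing everything to an application of Lemma~\ref{Lemma_B1}. The key observation is that, because of case~\ref{b}, the modified Debye-H\"uckel parameter $k$ vanishes in $\Omega_1$, so the bound on $w$ is only available in $\Omega_2$; this is why the claimed estimate reads $\|w\|_{L^\infty(\Omega_2)}+\overline e$ rather than an $L^\infty(\Omega)$-norm of $w$. First I would fix a threshold $s_0 := \|w\|_{L^\infty(\Omega_2)}$ and, for $h > s_0$, introduce the truncated test function $v_h := (u-h)^+ = \max\{u-h,0\}$, together with the super-level set $A(h) := \{x\in\Omega : u(x) > h\}$. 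Since $u\in H_0^1(\Omega)$, each $v_h$ lies in $H_0^1(\Omega)$, but it need not be bounded, so strictly speaking it is not an admissible test function in \eqref{un_weak_formulation}. The remedy is to invoke the Brezis-Browder theorem \cite{Brezis_Browder_1978_One_Property_of_Sobolev_Spaces} exactly as in the uniqueness argument above: applying it to $b(x,u+w)-l \in H^{-1}(\Omega)\cap L^1_{loc}(\Omega)$ and to $v_h\in H_0^1(\Omega)$ legitimizes testing the weak form against the unbounded truncation.

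The heart of the matter is the sign of the nonlinear term. On the set $A(h)$ where $u > h \geq s_0 = \|w\|_{L^\infty(\Omega_2)}$, I would argue that $u+w$ is strictly positive wherever $k\neq 0$: indeed on $A(h)\cap\Omega_2$ we have $u+w \ge u - \|w\|_{L^\infty(\Omega_2)} > h - s_0 \ge 0$, so $\sinh(u+w)\ge 0$ there, while on $\Omega_1$ the coefficient $k$ vanishes and the term drops out entirely. Hence $\int_\Omega k^2\sinh(u+w)\,v_h\,dx \ge 0$, and testing \eqref{un_weak_formulation} with $v_h$ yields
\begin{align*}
\epsilon_{\min}\int_{A(h)}|\nabla u|^2\,dx \le a(u,v_h) \le \int_\Omega l\,v_h\,dx = \int_{A(h)} l\,(u-h)\,dx.
\end{align*}
This is the clean coercivity-versus-forcing inequality that drives the iteration, and securing the correct sign of the nonlinearity through the case~\ref{b} geometry is the main obstacle; once it is in place the argument becomes a standard De Giorgi-Stampacchia estimate.

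It remains to convert the energy inequality into the hypothesis of Lemma~\ref{Lemma_B1}. I would set $\varphi(h) := \int_{A(h)}|\nabla u|^2\,dx$ (equivalently one may track the measure $|A(h)|$), which is clearly nonnegative and nonincreasing in $h$. Using the Cauchy-Schwarz inequality on the right-hand side, $\int_{A(h)} l(u-h)\,dx \le \|l\|_{L^2(\Omega)}\,\|(u-h)^+\|_{L^2(A(h))}$, followed by the Sobolev-Poincar\'e inequality to bound $\|(u-h)^+\|_{L^2}$ (or the appropriate $L^{2^*}$-norm for $d=3$) in terms of $\|\nabla(u-h)^+\|_{L^2}$, and then H\"older's inequality to introduce a power of $|A(h)|$, I would obtain an estimate of the form
\begin{align*}
\varphi(h) \le \frac{C\,\varphi(s)^{\beta}}{(h-s)^{\alpha}}, \qquad \forall\, h > s > s_0,
\end{align*}
with an exponent $\beta > 1$ whose value depends on $d$ through the Sobolev embedding (this is where the dependence of $\overline e$ on $d$ and $\Omega$, and only on $\|l\|_{L^2(\Omega)}$ and $\epsilon_{\min}$, enters). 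The dimensions $d=2,3$ must be handled with the correct Sobolev exponent, but both give $\beta>1$. Lemma~\ref{Lemma_B1} then furnishes an $\overline e>0$ with $\varphi(s_0+\overline e)=0$, i.e. $|A(s_0+\overline e)|=0$, which means $u \le s_0 + \overline e = \|w\|_{L^\infty(\Omega_2)} + \overline e$ almost everywhere. Applying the identical argument to $-u$ (testing with $(-u-h)^+$ and using $\sinh$ odd) bounds $u$ from below by the same constant, giving $\|u\|_{L^\infty(\Omega)} \le \|w\|_{L^\infty(\Omega_2)} + \overline e$. Finally, when $l=0$ the forcing term vanishes identically, the inequality forces $\varphi(s_0)=0$ directly, so $\overline e = 0$.
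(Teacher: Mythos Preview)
Your proposal is correct and follows essentially the same approach as the paper: Stampacchia-type truncation, justification of the unbounded test function via the Brezis--Browder theorem, the sign argument for the nonlinearity exploiting that $k\equiv 0$ in $\Omega_1$, Sobolev embedding combined with H\"older's inequality to obtain the iteration inequality, and finally Lemma~\ref{Lemma_B1}. The only minor stylistic difference is that the paper uses the two-sided truncation $G_s(u)=\mathrm{sgn}(u)\max\{|u|-s,0\}$ (handling both bounds at once) and tracks $|A(s)|$ directly, whereas you use the one-sided truncation $(u-h)^+$ and then repeat the argument for $-u$; both routes are equivalent.
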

\begin{proof}
To prove the boundedness of $u$ we apply the theorem in \cite{Brezis_Browder_1978_One_Property_of_Sobolev_Spaces} once again. 

The first step is to show that~\eqref{un_weak_formulation} holds for $v=G_s(u):=\text{sgn}(u)\max{\{|u|-s,0\}}$ and $s\ge \|w\|_{L^\infty(\Omega_2)}$. Similar test functions $G_s$ have been used in \cite[Theorem B.2]{Kinderlehrer_Stampacchia} in the context of linear elliptic problems.

 First, we note that by Stampacchia's theorem (Theorem 2.2.5 in \cite{Kesavan}) $G_s$ is Lipschitz continuous with $G_s(0)=0$ and hence $G_s(u)\in H_0^1(\Omega)$. From $a(u,\cdot)\in H^{-1}(\Omega)$, $(l,\cdot)\in H^{-1}(\Omega)$ and using \eqref{un_weak_formulation} it follows that $b(x,u+w)\in H^{-1}(\Omega)\cap L_{loc}^1(\Omega)$. Then, in view of Br{\'e}zis and Browder's theorem (\cite{Brezis_Browder_1978_One_Property_of_Sobolev_Spaces}), it suffices to show that 
 \begin{align}\label{Prop2_2:0}
 b(x,u+w)G_s(u)\ge f \text{ a.e. for some } f\in L^1(\Omega).
 \end{align}
Choosing $s\ge \|w\|_{L^\infty(\Omega_2)}$, using the monotonicity of $b(x,\cdot)$, and the fact that $b(x,0)=0$, we obtain 
 \begin{equation}\label{Prop2_2:1}
  b(x,u+w)G_s(u)=\left\{
  \begin{array}{rcl}
  b(x,u+w)(u-s)\ge &0 & \text{ for } u>s\\
  &0 &\text{ for } u\in[-s,s]\\
  b(x,u+w)(u+s)\ge &0 & \text{ for } u<-s,
  \end{array}
  \right.
  \end{equation}
  which shows the assumption \eqref{Prop2_2:0} for $f=0$.
  
  Now we are ready to prove that $u\in L^\infty(\Omega)$.
  First, we consider the case $l=0$. From \eqref{Prop2_2:1}, it follows that 
  \begin{align}\label{Prop2_2:2}
  \int_{\Omega}{ b(x,u+w)G_s(u)dx}\ge 0.
  \end{align}
   Moreover,
  \begin{eqnarray}\label{Prop2_2:3}
  a(u,G_s(u))&=&\int_{\Omega}{\epsilon\nabla u\cdot\nabla G_s(u)}=\int_{\Omega}{\epsilon\nabla G_s(u)\cdot\nabla G_s(u) dx}\nonumber\\
  &\ge& \epsilon_{\min}\|\nabla G_s(u)\|_{L^2(\Omega)}^2\ge \frac{\epsilon_{\min}}{C_{F}^2}\|G_s(u)\|_{L^2(\Omega)}^2,
  \end{eqnarray}
  where $C_{F}$ is the constant in Friedrichs' inequality $\|v\|_{L^2(\Omega)}\leq C_{F}\|\nabla v\|_{L^2(\Omega)}$ that holds for all $v\in H_0^1(\Omega)$.
  Finally, using \eqref{un_weak_formulation}, \eqref{Prop2_2:2}, and \eqref{Prop2_2:3} we get
  \begin{align*}
  \|G_s(u)\|_{L^2(\Omega)}^2\leq 0, \text{ for all } s\ge \|w\|_{L^\infty(\Omega_2)}.
  \end{align*}
Consequently $|u|\leq s$ almost everywhere and for all $s\ge \|w\|_{L^\infty(\Omega_2)}$.\\
In the case where $l$ is not identically zero in $\Omega$, we further estimate $a(u,G_s(u))$ from below and $\int\limits_{\Omega}{l G_s(u) dx}$ from above using the Sobolev embedding $H^1(\Omega)\hookrightarrow L^q(\Omega)$ where $q=\infty$ for $d=1$, $q<\infty$ for $d=2$, and $q=\frac{2d}{d-2}$ for $d\ge 3$. With $q^*$ we will denote the H{\"o}lder conjugate to $q$.  Thus, $q^*=1$ for $d=1$, $q^*=\frac{q}{q-1}>1$ for $d=2$, and $q^*=\frac{2d}{d+2}$ for $d>2$. In order to treat both cases in which we are interested simultaneously, namely $d=2,3$, we can take $q=6$ and $q^*=6/5$. With $C_E$ we denote the embedding constant in the inequality $\|v\|_{L^6(\Omega)}\leq C_E\|v\|_{H^1(\Omega)},\,\forall v\in H^1(\Omega)$, which depends only on the domain $\Omega$ and~$d$. Moreover, we define $A(s):=\{x\in\Omega: |u(x)|> s\}$. For $a(u,G_s(u))$, we have
\begin{align}\label{lower_bound_a_u_Gsu}
a(u,G_s(u))&=\int_{\Omega}{\epsilon\nabla G_s(u)\cdot\nabla G_s(u) dx}\ge \frac{\epsilon_{\min}}{1+C_{F}^2}\|G_s(u)\|_{H^1(\Omega)}^2
\end{align}
and for $\int\limits_{\Omega}{lG_s(u)dx}$ we obtain
\begin{align}\label{upper_bound_l_Gsu}
\int\limits_{\Omega}{lG_s(u)dx}&=\int\limits_{A(s)}{lG_s(u)dx}\leq \|l\|_{L^{q^*}(A(s))}\|G_s(u)\|_{L^{q}(\Omega)}\nonumber\\
&\leq C_E\|l\|_{L^{q^*}(A(s))}\|G_s(u)\|_{H^1(\Omega)}.
\end{align}
Combining \eqref{lower_bound_a_u_Gsu}, \eqref{upper_bound_l_Gsu}, \eqref{Prop2_2:2}, and \eqref{un_weak_formulation}, we obtain
\begin{align}\label{last_inequality_before_estimate_in_terms_of_As_measures}
\frac{\epsilon_{\min}}{1+C_{F}^2}\|G_s(u)\|_{H^1(\Omega)}\leq C_E\|l\|_{L^{q^*}(A(s))}.
\end{align}
The final step before applying Lemma \ref{Lemma_B1} is to estimate the left-hand side of \eqref{last_inequality_before_estimate_in_terms_of_As_measures} from below in terms of $|A(h)|$ for $h>s\ge \|w\|_{L^\infty(\Omega_2)}$ and the right-hand side of \eqref{last_inequality_before_estimate_in_terms_of_As_measures} from above in terms of $|A(s)|$.
Again using the Sobolev embedding $H^1(\Omega)\hookrightarrow L^q(\Omega)$ and H{\"o}lder's inequality yields
\begin{align}\label{lower_bound_in_terms_of_Ah}
&\|G_s(u)\|_{H^1(\Omega)}\ge \frac{1}{C_E}\left(\int\limits_{\Omega}{|G_s(u)|^{q}dx}\right)^{\frac{1}{q}}=\frac{1}{C_E}\left(\int\limits_{A(s)}{||u|-s|^{q}dx}\right)^{\frac{1}{q}}\nonumber\\
&\ge\frac{1}{C_E}\left(\int\limits_{A(h)}{(h-s)^{q}dx}\right)^{\frac{1}{q}}=\frac{1}{C_E}(h-s)|A(h)|^{\frac{1}{q}}
\end{align}
and 
\begin{align}\label{upper_bound_in_terms_of_As}
\|l\|_{L^{q^*}(A(s))}\leq \|l\|_{L^2(\Omega)}|A(s)|^{\frac{2-q^*}{2q^*}}.
\end{align}
Combining \eqref{lower_bound_in_terms_of_Ah}, \eqref{upper_bound_in_terms_of_As}, and \eqref{last_inequality_before_estimate_in_terms_of_As_measures}, we obtain the following inequality for the nonnegative and nonincreasing function $\varphi(t):=|A(t)|$
\begin{align} 
|A(h)|\leq \left(\frac{C_E^2\left(1+C_{F}^2\right)}{\epsilon_{\min}}\|l\|_{L^2(\Omega)}\right)^{q}\frac{|A(s)|^{\frac{q-2}{2}}}{(h-s)^q}, \text{ for all } h>s\ge \|w\|_{L^\infty(\Omega_2)}.
\end{align}
Since $\frac{q-2}{q}=2>1$, by applying Lemma \ref{Lemma_B1} we conclude that there is some $e>0$ such that
\begin{align*}
0<e^{q}&=\left(\frac{C_E^2\left(1+C_{F}^2\right)}{\epsilon_{\min}}\|l\|_{L^2(\Omega)}\right)^{q}|A(\|w\|_{L^\infty(\Omega_2)})|^{\frac{q-4}{2}}2^{\frac{q(q-2)}{q-4}}\\
&\leq \left(\frac{C_E^2\left(1+C_{F}^2\right)}{\epsilon_{\min}}\|l\|_{L^2(\Omega)}\right)^{q}|\Omega|^{\frac{q-4}{2}}2^{\frac{q(q-2)}{q-4}}=:\overline e^{q}
\end{align*}
and $|A(\|w\|_{L^\infty(\Omega_2)}+\overline e)|=0$. Hence $\|u\|_{L^\infty(\Omega)}\leq \|w\|_{L^\infty(\Omega_2)}+\overline e$.
\end{proof}
\begin{remark}
Since $k=0$ in $\Omega_1$, $w\in L^\infty(\Omega_2)$ and $u\in L^\infty(\Omega)$, we conclude that \eqref{un_weak_formulation} holds for all $v\in H_0^1(\Omega)$ resulting in a standard weak formulation. If $k^2$ is uniformly positive in the whole domain $\Omega$ and $w\in L^\infty(\Omega)$, then we have that $\|u\|_{L^\infty(\Omega)}\leq \|w\|_{L^\infty(\Omega)}+\overline e$. On the other hand, if $k=0$ in $\Omega_2$, $k^2$ is uniformly positive in $\Omega_1$, and $w\in L^\infty(\Omega_1)$, we have $\|u\|_{L^\infty(\Omega)}\leq \|w\|_{L^\infty(\Omega_1)}+\overline e$.
\end{remark}

\section{A posteriori error estimates}
We set $V:=H_0^1(\Omega)$, $Y:=[L^2(\Omega)]^d$ ($d=2,3$), and $\Lambda$ the gradient operator $\nabla: H_0^1(\Omega)\to [L^2(\Omega)]^d$. We further denote 
$g: \Omega \times \mathbb R^3\rightarrow \mathbb R, \quad g(x,\xi):=\frac{\epsilon(x)}{2}\abs{\xi}^2$, and $B:\Omega\times  \mathbb R\rightarrow \mathbb R,\quad B(x,\xi):=k^2(x)\cosh(\xi)$. With this notation, we have 
\begin{align*}
&G(\Lambda v):=\int\limits_{\Omega}{g(x,\nabla v(x))dx}=\int\limits_{\Omega}{\frac{\epsilon}{2}\abs{\nabla v}^2dx},\\
& F(v):=\int\limits_{\Omega}{B(x,v(x)+w(x))dx}=\int\limits_{\Omega}{k^2 \cosh(v+w)dx}-\int\limits_{\Omega}{lv dx}.
\end{align*}
For any $v\in V$ the functional $G(\Lambda v)$ is finite, while $F:V\to\mathbb R\cup\{+\infty\}$ may take the value $+\infty$ for some $v\in V$ if $d\ge 3$ (e.g $v=\log{\frac{1}{|x|^\alpha}},\,\alpha\ge d$ on the unit ball in $\mathbb R^d$). However, if $d\leq 2$, then $\exp(v)\in L^1(\Omega),\,\forall v\in H_0^1(\Omega)$ and $F:V\to\mathbb R$ (see \cite{Best_constants_in_some_exponential_Sobolev_inequalities}). Also, $F(0_V)$ is obviously finite since $w\in L^\infty(\Omega_2)$. We set $V^*=H^{-1}(\Omega)$ and $Y^*=Y=[L^2(\Omega)]^d$. In this case, $\Lambda^*$ coincides with $-\div$ considered as an operator from $[L^2(\Omega)]^d$ to $H^{-1}(\Omega)$. First we will give an explicit form of the error estimate in terms of forcing functionals using \eqref{Func_a_posteriori_estimate_with_forcing_functionals} and then we will present the particular form of the error equality \eqref{Functional_error_equality} where the error is measured in a special ''nonlinear norm''. This measure contains the usual combined energy norm terms, i.e. the sum of the energy norms of the errors for the primal and dual problem, but also two additional nonnegative terms due to the nonlinearity $B(x,\xi)$ (or equivalently $b(x,\xi)$) which in some cases may dominate the usual energy norm terms. We start by deriving explicit expressions for $G^*,F^*,\Upsilon_G, \Upsilon_{G^*}, \Upsilon_F, \Upsilon_{F^*}$ and then we will use these expressions to get an explicit form of the abstract error estimates \eqref{Func_a_posteriori_estimate_with_forcing_functionals} and \eqref{Functional_error_equality}.
\subsection{Fenchel Conjugates of $G$ and $F$}
It is easy to find that $G^*(y^*)=\int\limits_{\Omega}{\frac{1}{2\epsilon(x)}|y^*(x)|^2dx}$.
%\begin{align}\label{Expression_for_GStar} Since $G^*:Y^*\to\mathbb R$, the Lagrangian $L$ for problem \eqref{variational_problem_for_un} satisfies $L:V\times Y^*\to \mathbb R\cup \{+\infty\}$. Relations \eqref{relations_between_solutions_of_primal_dual_problems} now translate to $p^*=\epsilon\nabla u$ since $G,\, G^*$ are Gateaux differentiable and thus have unique subgradients $\epsilon\nabla u$ and $\frac{p^*}{\epsilon}$ at $\Lambda u$ and $p^*$, respectively.\\
%%Let us find the explicit form of $F^*(-\Lambda^*y^*)$.
For $y^*\in H(\div;\Omega)$ and an arbitrary function $z:\Omega_2\to \mathbb R$, we introduce 
\begin{align}\label{notation_I_y*}
I_{y^*}(z):=\int\limits_{\Omega_2}{\left[(\div y^*+l)z-B(x,z+w)\right]dx} .
\end{align}
Recalling that the nonlinearity $B$~is supported on $\Omega_2$, we have
\begin{align}\label{Computing_F_star}
&F^*(-\Lambda^*y^*)=\sup\limits_{z\in H_0^1(\Omega)}{\left[\langle -\Lambda^* y^*,z\rangle - F(z)\right]}=
\sup\limits_{z\in H_0^1(\Omega)}{\left[( -y^*,\Lambda z) - F(z)\right]}\nonumber\\
&=\sup\limits_{z\in H_0^1(\Omega)}{\int\limits_{\Omega}{\left[-y^*\cdot\nabla z-B(x,z+w)+lz\right]dx}}=\quad(\text{if $y^*\in H(\div;\Omega)$})\nonumber\\
&=\sup\limits_{z\in H_0^1(\Omega)}{\int\limits_{\Omega}{\left[\div y^*z-B(x,z+w)+lz\right]dx}}\quad (\text{finite if } \div y^*+l=0 \text{ in } \Omega_1)\nonumber\\
&=\sup\limits_{z\in H_0^1(\Omega)}{I_{y^*}(z)}\leq \int\limits_{\Omega_2}{\sup\limits_{\xi\in\mathbb R}{\left[\left(\div y^*(x)+l(x)\right)\xi-B\left(x,\xi+w(x)\right)\right]}dx}\nonumber\\
&=\int\limits_{\Omega_2}{\left[\left(\div y^*(x)+l(x)\right)\xi_0(x)-B\left(x,\xi_0(x)+w(x)\right)\right]dx}=I_{y^*}(\xi_0).
\end{align}
\noindent Here $\xi_0:\Omega_2\to\mathbb R$ is computed from the condition
\begin{align}\label{Necessary_Condition_for_Maximum_in_Ksi}
&\frac{d}{d\xi}\left[\left(\div y^*(x)+l(x)\right)\xi-B\left(x,\xi+w(x)\right)\right]=0, \,\, \text{for a.e.}\,\,x\in \Omega_2,
\end{align}
which is equivalent to
\begin{align*}
&\div y^*(x)+l(x)-k^2(x)\sinh\left(\xi+w(x)\right)=0 \,\, \text{for a.e.}\,\,x\in \Omega_2.
\end{align*}
We notice that \eqref{Necessary_Condition_for_Maximum_in_Ksi} is a necessary condition for a maximum which is also sufficient since $B(x,\cdot)$ is convex.
The solution of the last equation exists, is unique, and is given by
\begin{align}\label{definition_ksi0}
&\xi_0(x)=\arsinh\left(\rho_k(y^*)\right)-w(x)\nonumber\\
&=\ln\left(\rho_k(y^*)+\sqrt{\rho_k^2(y^*)+1}\right)-w(x)=\ln\left(\Theta\left(\rho_k(y^*)\right)\right)-w(x),
\end{align}
where $\rho_k(y^*):=\frac{\div y^*(x)+l(x)}{k^2(x)}$ and $\Theta(s):=s+\sqrt{s^2+1}$ for $s\in\mathbb R$. Note that the exact solution $p^*=\epsilon\nabla u$ of
the dual Problem $(P^*)$ also satisfies the relation $\div(\epsilon\nabla u)+l=0$
because for any $x \in \Omega_1$ it holds $k(x)=0$.
%in $\Omega_1$ since $k(x)=0,\,\forall x\in \Omega_1$.
Moreover, since $u\in L^\infty(\Omega)$, $w\in L^\infty(\Omega_2)$, and $l\in L^2(\Omega)$, we see that the
%weak divergence
$\div p^*=k^2 \sinh(u+w)+l \in L^2(\Omega)$
%is in $L^2(\Omega)$
and thus $p^*\in H(\div;\Omega)$.
In Proposition \ref{prop_L2_assumption_on_div_k_zero_in_Omega_m}, we will later prove that we have
not overestimated the supremum over $z\in H_0^1(\Omega)$ in \eqref{Computing_F_star} and that we actually have equalities everywhere.
Denoting $S:=\arsinh\left(\rho_k(y^*)\right)$, and using the expression for $\xi_0(x)$ and the formula $\cosh(\arsinh(x))=\sqrt{x^2+1},\,\forall x\in\mathbb R$,
for any $y^*\in H(\div;\Omega)\subset [L^2(\Omega)]^d=Y^*$ with $\div y^*+l=0$ in $\Omega_1$ we obtain an explicit formula for $F^*(-\Lambda^*y^*)$:
%\begin{align}\label{Expression_for_Fstar}
%&F^*(-\Lambda^* y^*)=\nonumber\\
%&\int\limits_{\Omega_2}{\left[(\div y^*+l)\left(\ln{\left(\frac{\div y^*+l}{k^2}+\sqrt{\left(\frac{\div y^*+l}{k^2}\right)^2+1}\right)}-w\right)-k^2\sqrt{\left(\frac{\div y^*+l}{k^2}\right)^2+1}\right]dx}
%\end{align}
\begin{align}\label{Expression_for_Fstar}
\begin{aligned}
&F^*(-\Lambda^* y^*)=\int\limits_{\Omega_2}{\left[k^2\rho_k(y^*)\left(\ln{\left(\Theta\left(\rho_k(y^*)\right)\right)}-w\right)-k^2\sqrt{\rho_k^2(y^*)+1}\right]dx}\\
&=\int\limits_{\Omega_2}{\left[k^2\sinh(S)(S-w)-k^2\cosh(S)\right]dx}
\end{aligned}
\end{align}

\begin{remark}\label{Remark_log_inequality_on_R}
Since $\left|\ln\left(t+\sqrt{t^2+1}\right)\right|\leq |t|,\, \forall t\in\mathbb R$, the function $\ln\left(\Theta(f(x))\right)-w(x)$ belongs to $L^2(\Omega_2)$ for any $f\in L^2(\Omega_2)$ and we conclude that $\xi_0(x)\in L^2(\Omega_2)$ if $y^*\in H(\div;\Omega)$. Therefore the integral in \eqref{Expression_for_Fstar} is well defined. 
\end{remark}

%\begin{proposition}\label{logarithm_proposition_1}
%\begin{align*}
%\left|\ln\left(t+\sqrt{t^2+1}\right)\right|\leq |t|,\quad \forall t\in\mathbb R.
%\end{align*}
%\end{proposition}
%
%\begin{proof}
%For $t=0$ the inequality is obviously true. For $t>0$ the inequality is equivalent to $\ln\left(t+\sqrt{t^2+1}\right)\leq t$, and for $t<0$ it is equivalent to $-\ln\left(t+\sqrt{t^2+1}\right)\leq -t$. Consider the function $g(t)=t-\ln\left(t+\sqrt{t^2+1}\right)$. We have $g(0)=0$ and 
%\begin{align*}
%&g'(t)=1-\frac{1}{t+\sqrt{t^2+1}}\left(1+\frac{1}{2\sqrt{t^2+1}}2t\right)=1-\frac{1}{t+\sqrt{t^2+1}}\frac{\sqrt{t^2+1}+t}{\sqrt{t^2+1}}\\
%&=1-\frac{1}{\sqrt{t^2+1}}>0,\quad \forall t\in\mathbb R .
%\end{align*}
%This means that  $g(t)>0,\,\forall t>0$, or, equivalently, $\ln\left(t+\sqrt{t^2+1}\right)<t,\,\forall t>0$ and $g(t)<0,\,\forall t<0$, or, equivalently, $-\ln\left(t+\sqrt{t^2+1}\right)<-t,\,\forall t<0$.
%%\begin{flushright}
% %$\blacksquare$
%%\end{flushright}
%\end{proof}

Now our goal is to prove that the inequality $\sup\limits_{z\in H_0^1(\Omega)}{I_{y^*}(z)}\leq I_{y^*}(\xi_0)$ holds
as the equality. In other words, we want to prove that the error estimate remains sharp and that the computed majorant
$M_\oplus^2(v,y^*)$ will be indeed zero if approximations $(v,y^*)$ coincide with the exact solution $(u, p^*)$.
\begin{proposition}\label{prop_L2_assumption_on_div_k_zero_in_Omega_m}
For any $y^*\in H(\div;\Omega)$ with $\div y^*+l=0$ in $\Omega_1$ it holds
 $$\sup\limits_{z\in H_0^1(\Omega)}{I_{y^*}(z)}=I_{y^*}(\xi_0)<\infty.$$ 
\end{proposition}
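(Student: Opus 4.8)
The plan is to establish the nontrivial inequality $\sup_{z\in H_0^1(\Omega)} I_{y^*}(z) \ge I_{y^*}(\xi_0)$; the reverse inequality $\sup_{z\in H_0^1(\Omega)} I_{y^*}(z) \le I_{y^*}(\xi_0)$ is already contained in the derivation of \eqref{Computing_F_star}, since for a.e.\ $x\in\Omega_2$ the integrand $(\div y^*+l)\xi - B(x,\xi+w)$ is maximized over $\xi\in\mathbb R$ by $\xi=\xi_0(x)$. First I would record that $I_{y^*}(\xi_0)$ is finite: by \eqref{definition_ksi0}, $B(x,\xi_0+w)=k^2\cosh(S)=\sqrt{(\div y^*+l)^2+k^4}$, which lies in $L^1(\Omega_2)$ because $\div y^*+l\in L^2(\Omega)$, $k\in L^\infty(\Omega)$ and $|\Omega_2|<\infty$, while $(\div y^*+l)\xi_0\in L^1(\Omega_2)$ as a product of two $L^2$ functions (recall $\xi_0\in L^2(\Omega_2)$ by Remark \ref{Remark_log_inequality_on_R}).

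The difficulty is that the pointwise maximizer $\xi_0$ need not belong to $H_0^1(\Omega)$ and that $B$ grows exponentially, so one must produce admissible competitors $z\in H_0^1(\Omega)$ with $I_{y^*}(z)$ close to $I_{y^*}(\xi_0)$ and carefully justify the passage to the limit in $\int_{\Omega_2} B(x,z+w)\,dx$. Since $I_{y^*}(z)$ depends only on $z|_{\Omega_2}$ and any function compactly supported in $\Omega_2$ extends by zero to an element of $H_0^1(\Omega)$, it suffices to approximate $\xi_0$ by functions of the latter type. I would do this in two steps. First, truncate: with $T_M(t):=\max\{-M,\min\{M,t\}\}$ and $\xi_0^M:=T_M(\xi_0)$, I claim $I_{y^*}(\xi_0^M)\to I_{y^*}(\xi_0)$ as $M\to\infty$. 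Second, for fixed $M$, approximate the bounded function $\xi_0^M$ by admissible competitors.

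For the second step, pick $\psi_n\in C_c^\infty(\Omega_2)$ with $\psi_n\to\xi_0^M$ in $L^2(\Omega_2)$ and set $z_n:=T_M(\psi_n)$. Then $z_n$ is Lipschitz with compact support in $\Omega_2$, hence (by Stampacchia's theorem and extension by zero) $z_n\in H_0^1(\Omega)$, with $|z_n|\le M$; moreover $z_n\to\xi_0^M$ in $L^2(\Omega_2)$ because $T_M$ is a contraction fixing $\xi_0^M$. Passing to a subsequence converging a.e., the linear term $\int_{\Omega_2}(\div y^*+l)z_n\,dx$ converges by $L^2$-convergence, and the nonlinear term converges by dominated convergence, the integrand $k^2\cosh(z_n+w)$ being bounded by the constant $k^2\cosh\!\big(M+\|w\|_{L^\infty(\Omega_2)}\big)\in L^1(\Omega_2)$. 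Hence $I_{y^*}(z_n)\to I_{y^*}(\xi_0^M)$, giving $\sup_{z\in H_0^1(\Omega)} I_{y^*}(z)\ge I_{y^*}(\xi_0^M)$.

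For the first step, as $M\to\infty$ one has $\xi_0^M\to\xi_0$ pointwise a.e.; the linear term converges by dominated convergence (dominated by $|(\div y^*+l)\xi_0|\in L^1$), and for the nonlinear term I would use the uniform-in-$M$ bound
\[
k^2\cosh(\xi_0^M+w)\le k^2\cosh(|\xi_0|+|w|)\le e^{2\|w\|_{L^\infty(\Omega_2)}}\,k^2\cosh(S)=e^{2\|w\|_{L^\infty(\Omega_2)}}\sqrt{(\div y^*+l)^2+k^4}\in L^1(\Omega_2),
\]
obtained by applying $\cosh(a+b)\le e^{|b|}\cosh(a)$ twice (once to pass from $\xi_0^M+w$ to $|\xi_0|+|w|$, once to pass from $\xi_0=S-w$ back to $S$). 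Dominated convergence then yields $I_{y^*}(\xi_0^M)\to I_{y^*}(\xi_0)$, and letting $M\to\infty$ gives $\sup_{z\in H_0^1(\Omega)} I_{y^*}(z)\ge I_{y^*}(\xi_0)$, which together with the easy inequality proves the claimed equality (and finiteness). The main obstacle is precisely the construction of this uniform $L^1$-dominating function for the exponentially growing nonlinearity while controlling the effect of the shift $w$; the boundedness of $w$ on $\Omega_2$ and the membership $\div y^*+l\in L^2(\Omega)$ are exactly what make it available.
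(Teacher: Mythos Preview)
Your argument is correct and reaches the same conclusion, but the route differs from the paper's. The paper approximates the \emph{data} rather than the maximizer: it takes $f_n\in C_c^\infty(\Omega_2)$ converging a.e.\ to $f=\rho_k(y^*)$ with a pointwise $L^2$ majorant $h$, and $w_n\in C_c^\infty(\Omega_2)$ converging a.e.\ to $w$ with $|w_n|\le \|w\|_{L^\infty(\Omega_2)}+2$, and then sets $z_n:=\ln(\Theta(f_n))-w_n\in C_c^\infty(\Omega_2)$ directly. The delicate point there is to dominate $k^2 e^{|z_n|}$, which requires a case distinction on the sign of $f_n$ to show $|\ln(\Theta(f_n))|\le \ln(\Theta(h))$ and then uses $\Theta(h)\in L^2(\Omega_2)$. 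By contrast, you first truncate $\xi_0$ and only afterwards approximate the bounded truncation in $L^2$; this two-step decomposition lets you dominate the nonlinear term in the second step by a constant and in the first step by the clean closed form $e^{2\|w\|_{L^\infty(\Omega_2)}}\sqrt{(\div y^*+l)^2+k^4}$, avoiding the sign analysis and the separate approximation of $w$. Your approach is slightly more elementary and modular; the paper's is one-shot and yields smooth (not just Lipschitz) competitors, but both rest on the same ingredients ($\xi_0\in L^2(\Omega_2)$, boundedness of $w$ on $\Omega_2$, and dominated convergence).
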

\begin{proof}
The idea is to approximate $f=\frac{\div y^*+l}{k^2}\in L^2(\Omega_2)$ and $w_{\restriction_{\Omega_2}}\in L^\infty(\Omega_2)$ by $C_0^\infty(\Omega_2)$ functions (in the a.e. sense) and use the Lebesgue dominated convergence theorem. Let $f_n\in C_0^\infty(\Omega_2)$ and $w_n\in C_0^\infty(\Omega_2)$ be such that $f_n(x)\to f(x),\, a.e.$ in $\Omega_2$, $|f_n(x)|\leq h(x)\in L^2(\Omega_2)$ (see Theorem 4.9 in \cite{Brezis_FA}), $w_n(x)\to w(x),\,a.e.$ in $\Omega_2$, $|w_n(x)|\leq m+2$, where $m:=\|w\|_{L^\infty(\Omega_2)}$. Then $z_n(x):=\ln\left(\Theta\left(f_n(x)\right)\right)-w_n(x)\to \xi_0(x),\,a.e.$ in $\Omega_2$ and $z_n\in C_0^\infty(\Omega_2)\subset H_0^1(\Omega_2)\subset H_0^1(\Omega)$ (by extending the functions by zero in $\Omega_1$). Since $B(x,\cdot)$ is continuous, we have the pointwise a.e. in $\Omega_2$ convergence
\begin{eqnarray*}
\left(\div y^*(x)+l(x)\right)z_n(x)-B\left(x,z_n+w(x)\right)\to\left(\div y^*(x)+l(x)\right)\xi_0(x)-B(x,\xi_0(x)+w(x))
\end{eqnarray*}
Now we search for a function in $L^1(\Omega_2)$ that majorates the function $\vert \left(\div y^*(x)+l(x)\right)z_n(x)-B\left(x,z_n+w(x)\right)\vert$:
\begin{align}\label{inequality_1}
&\mathrel{\phantom{=}}\left|\left(\div y^*(x)+l(x)\right)z_n(x)-k^2(x)\cosh\left(z_n(x)+w(x)\right)\right|\nonumber\\
&\leq  |\div y^*(x)+l(x)||z_n(x)|+k^2(x)e^{\|w\|_{L^\infty(\Omega_2)}}e^{|z_n(x)|}
\end{align}
Our next goal is to bound $|z_n(x)|$ in \eqref{inequality_1}. For the first summand, we have
$$|z_n(x)|=\left|\ln\left(\Theta\left(f_n(x)\right)\right)-w_n(x)\right|\leq |f_n(x)|+m+2\leq h(x)+m+2\in L^2(\Omega_2)$$
where Remark \ref{Remark_log_inequality_on_R} has been used. However, this bound cannot be used in the second
term because $e^h$ might not belong even to $L^1(\Omega_2)$. In order to find an $L^1$-majorant for the second
summand in \eqref{inequality_1}, we distinguish the following two cases:

In the first case $ f_n(x)>0$.
Then $\left|\ln\left(\Theta\left(f_n(x)\right)\right)\right|\leq \left|\ln\left(\Theta\left(h(x)\right)\right)\right|.$

In the second case ($f_n(x)\leq 0$), we have $\Theta\left(f_n(x)\right)\leq 1$. Therefore, $0\ge~ f_n(x)\ge -h(x)$.
Since $\Theta(s)$ is a monotonically increasing function,
\begin{align*}
&\Theta(0)=1\ge \Theta(f_n(x))\ge \Theta(-h(x))>0.
\end{align*}
From here we obtain
\begin{align*}
&\ln(1)=0\ge \ln\left(\Theta\left(f_n(x)\right)\right)\ge \ln\left(\Theta\left(-h(x)\right)\right)
\end{align*}
and using the relation $\Theta(-h)=\frac{1}{\Theta(h)}$ we conclude that
\begin{align*}
& \left|\ln\left(\Theta\left(f_n(x)\right)\right)\right|\leq \left|\ln\left(\Theta\left(-h(x)\right)\right)\right|=\left|\ln\left(\Theta\left(h(x)\right)\right)\right|.
\end{align*}
Finally, for almost all $x\in\Omega_2$ we have 
\begin{align*}
&\mathrel{\phantom{=}}|z_n(x)|=\left|\ln\left(\Theta\left(f_n(x)\right)\right)-w_n(x)\right|\leq  \left|\ln\left(\Theta\left(h(x)\right)\right)\right|+m+2\\
&= \ln\left(\Theta\left(h(x)\right)\right)+m+2, \text{ because } h(x)\ge 0,\, \text{for a.e.} \, x\in\Omega_2.
\end{align*}
Therefore, 
\begin{align*}
&\mathrel{\phantom{=}}\left|\left(\div y^*(x)+l(x)\right)z_n(x)-k^2(x)\cosh\left(z_n(x)+w(x)\right)\right|\\
&\leq |\div y^*(x)+l(x)|\left(h(x)+\|w\|_{L^\infty(\Omega_2)}+2\right)\\
&+k^2(x)e^{2\|w\|_{L^\infty(\Omega_2)}+2}\Theta\left(h(x)\right):=H(x)\in L^2(\Omega_2),
\end{align*}
where in the last line we used the fact that $\Theta\left(h(x)\right)\in L^2(\Omega_2)$.
%\begin{equation*}
%\begin{split}
%\left|\div y^*(x)\left(w_n(x)+\tilde u^L_{h_L}(x)\right)-k^2(x)\cosh\left(w_n(x)+\tilde u^L_{h_L}(x)\right)\right|\\
%\leq  |\div y^*(x)|\left(|w_n(x)|+\|\tilde u^L_{h_L}\|_{L^\infty(\Omega)}\right)+k^2(x)e^{\|\tilde u^L_{h_L}\|_{L^\infty(\Omega)}}e^{|w_n(x)|}\\
%\leq  |\div y^*(x)|\left(h(x)+m+2+\|\tilde u^L_{h_L}\|_{L^\infty(\Omega)}\right)\\
%+k^2(x)e^{\|\tilde u^L_{h_L}\|_{L^\infty(\Omega)}}e^{\left|\ln\left(h(x)+\sqrt{h^2(x)+1}\right)\right|+m+2}\\
%= |\div y^*(x)|\left(h(x)+2\|\tilde u^L_{h_L}\|_{L^\infty(\Omega)}+2\right)\\
%+k^2(x)e^{\|u^l\|_{L^\infty(\Omega)}}e^{\ln\left(h(x)+\sqrt{h^2(x)+1}\right)+\|\tilde u^L_{h_L}\|_{L^\infty(\Omega)}+2}\\
%= |\div y^*(x)|\left(h(x)+2\|\tilde u^L_{h_L}\|_{L^\infty(\Omega)}+2\right)\\
%+k^2(x)e^{2\|\tilde u^L_{h_L}\|_{L^\infty(\Omega)}+2}\underbrace{\left(h(x)+\sqrt{h^2(x)+1}\right)}_{\in L^2(\Omega_s)}:=H(x)\in L^2(\Omega_s).
%\end{split}
%\end{equation*}
%Above we used the inequality $$1<\frac{1}{-y+\sqrt{y^2+1}}\leq 2y+1,\,\forall y>0$$ and therefore $$\left|\frac{1}{-h(x)+\sqrt{h^2(x)+1}}\right|^2\leq |2h(x)+1|^2\in L^1(\Omega_s)$$ 
All the conditions of the Lebesgue's dominated convergence theorem are satisfied and we see that $I_{y^*}(z_n)\to I_{y^*}(\xi_0)$ and, consequently, $\sup\limits_{z\in H_0^1(\Omega)}{I_{y^*}(z)}=I_{y^*}(\xi_0)$.
%\begin{flushright}
 %$\blacksquare$
%\end{flushright}
\end{proof}

\subsection{Forcing Functionals}

Now our goal is to compute the forcing functionals $\Upsilon_G,\Upsilon_{G^*},\Upsilon_F,\Upsilon_{F^*}$.
For any $y_1,y_2\in Y=[L^2(\Omega)]^d$, we have
\begin{align}\label{Upsilon_G}
&\frac{1}{2}G(y_1)+\frac{1}{2}G(y_2)-G\left(\frac{y_1+y_2}{2}\right)=\frac{1}{2}\int\limits_{\Omega}{\frac{\epsilon}{2}|y_1|^2dx}+\frac{1}{2}\int\limits_{\Omega}{\frac{\epsilon}{2}|y_2|^2dx}-\int\limits_{\Omega}{\frac{\epsilon}{2}\left(\frac{y_1+y_2}{2}\right)^2dx}\nonumber\\
&=\frac{1}{8}\int\limits_{\Omega}{\epsilon|y_1-y_2|^2dx}=:\Upsilon_G(y_1-y_2).
\end{align}
Similarly, for any $y_1^*,y_2^*\in Y^*=[L^2(\Omega)]^d$ we get
\begin{align}\label{Upsilon_Gstar}
\Upsilon_{G^*}(y_1^*-y_2^*)=\frac{1}{8}\int\limits_{\Omega}{\frac{1}{\epsilon}|y_1^*-y_2^*|^2dx}.
\end{align}
We note that according to the definition of uniformly convex functional, $F$ is not uniformly convex because $k=0$ in $\Omega_1$ and therefore $F$ is affine on the linear subspace $H_0^1(\Omega_m)\subset H_0^1(\Omega)=V$,
where functions in $H_0^1(\Omega_1)$ are extended by zero into $\Omega_2$.
Thus, if there is a nonnegative functional $\Upsilon_F$ such that \eqref{Defining_inequality_for_uniform_convexity}
is satisfied for all $v_1,v_2\in V$, it will be necessarily zero for all $v_1,v_2$ such that $v_1=v_2$ in $\Omega_2$
(see Remark \ref{Remark_forcing_functional}). If $F_1$ is uniformly convex with forcing functional $\Upsilon_{F_1}$
and $F_2$ is convex, then $F=F_1+F_2$ is uniformly convex with a forcing functional $\Upsilon_F=\Upsilon_{F_1}$.
Since $F(v)=\int\limits_{\Omega}{B(x,v+w)dx}-\int\limits_{\Omega}{lvdx}$ it is enough to find a forcing functional
$\Upsilon_{\tilde B_x}:\mathbb R\to\mathbb R$ for $\tilde B_x(.):=B(x,.+w(x)):\mathbb R\to\mathbb R$ for a.e.
$x\in \Omega$. In this case, we will define $\Upsilon_F(v_1-v_2):=\int_{\Omega}{\Upsilon_{\tilde B_x}(v_1-v_2)dx}$.
We need to find $\Upsilon_{\tilde B_x}$ such that
\begin{align}
\Upsilon_{\tilde B_x}(\xi_1-\xi_2)\leq \frac{1}{2}\tilde B_x(\xi_1)+\frac{1}{2}\tilde B_x(\xi_2)-\tilde B_x\left(\frac{\xi_1+\xi_2}{2}\right),\,\forall \xi_1,\xi_2\in\mathbb R.
\end{align}
If we denote $\zeta:=\xi_1-\xi_2$ then $\xi_2=\xi_1-\zeta$ and we have
\begin{align*}
&\Upsilon_{\tilde B_x}(\zeta)\leq \frac{1}{2}\tilde B_x(\xi_1)+\frac{1}{2}\tilde B_x(\xi_1-\zeta)-\tilde B_x\left(\frac{2\xi_1-\zeta}{2}\right) ,\,\forall \xi_1,\zeta\in\mathbb R.
\end{align*}
Thus, we define 
\begin{align*}
&\Upsilon_{\tilde B_x}(\zeta):=\inf\limits_{\xi_1\in\mathbb R}{\left[\frac{1}{2}\tilde B_x(\xi_1)+\frac{1}{2}\tilde B_x(\xi_1-\zeta)-\tilde B_x\left(\frac{2\xi_1-\zeta}{2}\right) \right]}\nonumber\\
&=\inf\limits_{\xi_1\in\mathbb R}{\left[\frac{k^2(x)}{2}\left(\cosh(\xi_1+w(x))+\cosh(\xi_1-\zeta+w(x))-2\cosh\left(\frac{2\xi_1-\eta}{2}+w(x)\right)\right)\right]}.
\end{align*}
A necessary condition for $\bar \xi_1$ to minimize the above expression is that the first derivative with respect to $\xi_1$ vanishes at $\bar \xi_1$, i.e.,
%\begin{align}\label{forcing_functional_for_B_necessary_condition_for_min}
%&\frac{d}{d\xi_1}\left[\frac{k^2(x)}{2}\left(\cosh(\xi_1+u^l(x))+\cosh(\xi_1-\eta+u^l(x))\right.\right.\nonumber\\
%&\quad\quad\left.\left.-2\cosh\left(\frac{2\xi_1-\eta}{2}+u^l(x)\right)\right)\right]=0.
%\end{align}
\begin{equation}\label{forcing_functional_for_B_necessary_condition_for_min}
    \begin{split}
   \frac{d}{d\xi_1}\left[\frac{k^2(x)}{2}\left(\cosh(\xi_1+w(x))+\cosh(\xi_1-\zeta+w(x))\right.\right.\\
\left.\left.-2\cosh\left(\frac{2\xi_1-\zeta}{2}+w(x)\right)\right)\right]=0.
    \end{split}
\end{equation}
Since the second derivative with respect to $\xi_1$
%%is equal to the expression itself which
is always positive because of the convexity of $\cosh$, we see that
the necessary condition for a minimum is also a sufficient condition.
After using the formula $\sinh(\xi)=\frac{\exp(\xi)-\exp(-\xi)}{2}$ and making the substitutions $\exp(\xi_1+w(x))=r>0$
and $\exp(\zeta/2)=a>0$ in \eqref{forcing_functional_for_B_necessary_condition_for_min} we get
\begin{align*}
\frac{1}{2}\left(r-\frac{1}{r}\right)+\frac{1}{2}\left(\frac{r}{a^2}-\frac{a^2}{r}\right)-\left(\frac{r}{a}-\frac{a}{r}\right)=0
\end{align*}
from which after solving for $r$ we get $r=a$ and $\bar\xi_1(x)=\frac{\zeta}{2}-w(x)$. Therefore,
\begin{align}
\Upsilon_{\tilde B_x}(\zeta)=k^2(x)\left[\cosh\left(\frac{\zeta}{2}\right)-1\right]
\end{align}
and thus
\begin{align}
\Upsilon_F(v-u)=\int\limits_{\Omega}{k^2\left[\cosh\left(\frac{v-u}{2}\right)-1\right]dx}.
\end{align}
Here we note that
\begin{align}\label{Quadratic_lower_bound_for_Upsilon_F}
 \Upsilon_F(v-u)\ge \int\limits_{\Omega}{\frac{k^2}{8}(v-u)^2dx}:=\underline{\Upsilon}_F(v-u)
 \end{align}
 since $\cosh(\frac{\xi}{2})-1\ge \frac{\xi^2}{8},\,\forall \xi\in\mathbb R$ and thus $\underline{\Upsilon}_F$ is also
 a forcing functional (see Figure \ref{Lower_Bound_Upsilon_F}). If $k^2$ is uniformly positive on the whole
 domain $\Omega$, then $\sqrt{\underline{\Upsilon}_F(v-u)}$ is equivalent to the $L^2(\Omega)$ norm of $v-u$.
 \begin{figure}
 \centering
        \centering
        \captionsetup{width=.7\linewidth}
        \includegraphics[width=0.8\linewidth]{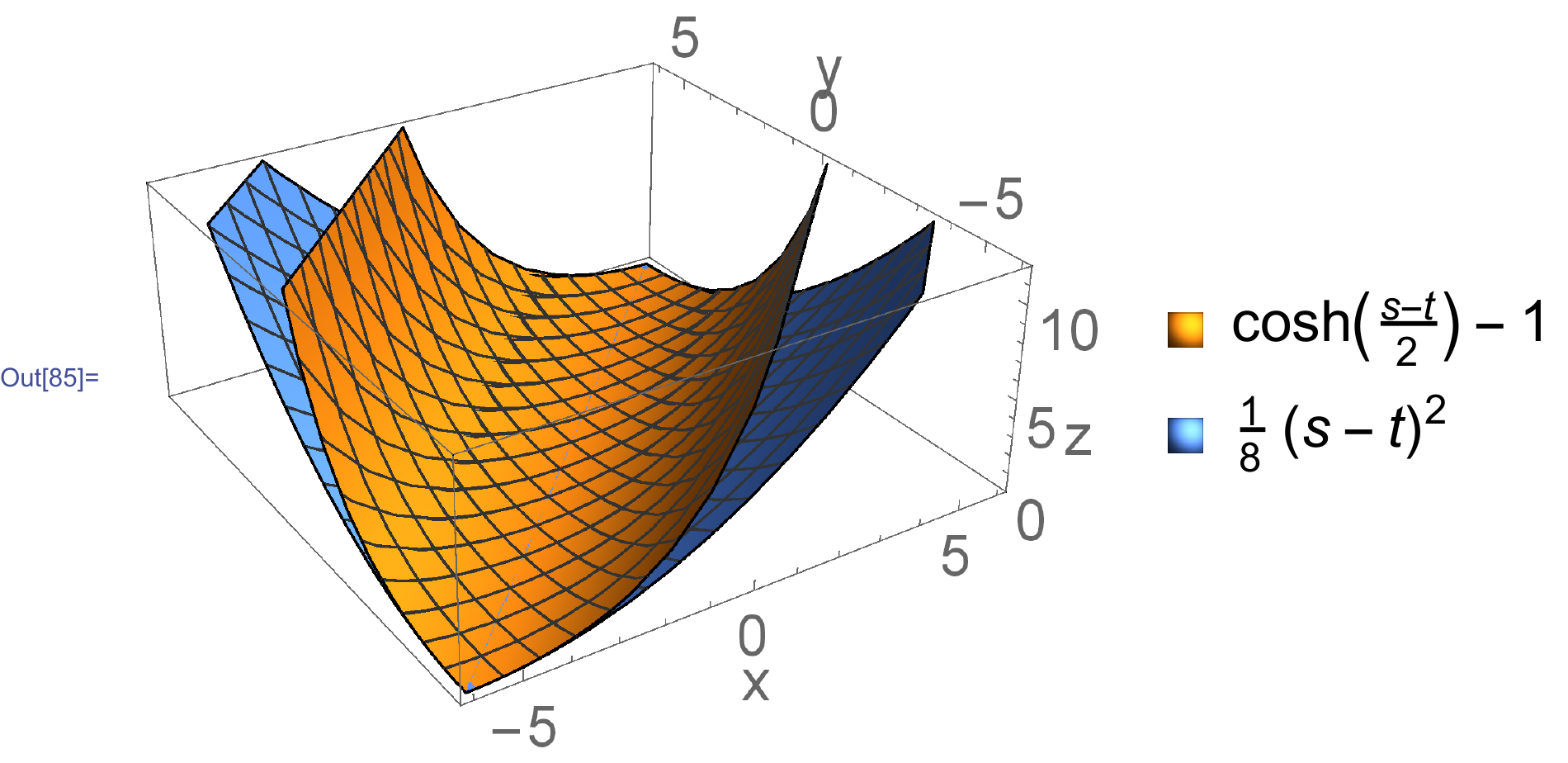}
        \caption{Lower bound of $\cosh\left(\frac{s-t}{2}\right)-1$ formed by a quadratic function.}
        \label{Lower_Bound_Upsilon_F}
\end{figure}
Finally, we obtain a forcing functional for $F^*$. We have derived an explicit expression for $F^*$ only for arguments
of the form $-\Lambda^*y^*$ where $y^*\in H(\div;\Omega)\subset [L^2(\Omega)]^d=Y^*$. Therefore, we will search
only for a forcing functional $\Upsilon_{F^*}$ which takes such arguments, i.e. $F^*(-\Lambda^*y^*+\Lambda^*p^*)$.
We should also note that these arguments are abstract elements from the dual space of $H_0^1(\Omega)$.
However (as we already saw in \eqref{Computing_F_star}), if $y^*\in H(\div;\Omega)$, then 
%\begin{align}
%&F^*(-\Lambda^*y^*)=\sup\limits_{w\in H_0^1(\Omega)}{\int\limits_{\Omega}{\left[\div y^*w-B(x,w+u^l)\right]dx}}\nonumber\\
%&=\sup\limits_{w\in L^2(\Omega)}{\int\limits_{\Omega}{\left[\div y^*w-B(x,w+u^l)\right]dx}}\nonumber\\
%&=:\tilde F^*(\div y^*)
%\end{align}
\begin{eqnarray}
F^*(-\Lambda^*y^*)&=&\sup\limits_{z\in H_0^1(\Omega)}{\int\limits_{\Omega}{\left[\div y^*z-B(x,z+w)+lz\right]dx}}\nonumber\\
&=&\sup\limits_{z\in L^2(\Omega)}{\int\limits_{\Omega}{\left[\div y^*z-B(x,z+w)+lz\right]dx}}\nonumber\\
&=:&\tilde F^*(\div y^*)
\end{eqnarray}
%\begin{equation}
%\begin{split}
%F^*(-\Lambda^*y^*)=\sup\limits_{w\in H_0^1(\Omega)}{\int\limits_{\Omega}{\left[\div y^*w-B(x,w+u^l)\right]dx}}\\
%=\sup\limits_{w\in L^2(\Omega)}{\int\limits_{\Omega}{\left[\div y^*w-B(x,w+u^l)\right]dx}}\\
%=:\tilde F^*(\div y^*)
%\end{split}
%\end{equation}
where $\tilde F^*$ is the Fenchel conjugate of the functional $\tilde F: L^2(\Omega)\to\mathbb R\cup\{+\infty\}$ defined by 
\begin{align*}
\tilde F(z)=\int\limits_{\Omega}{\left[B(x,z+w)-lz\right]dx},\,\forall z\in L^2(\Omega)
\end{align*}
and the supremum over $z\in H_0^1(\Omega)$ is equal to the supremum over $z\in L^2(\Omega)$ due to Remark \ref{Remark_log_inequality_on_R} and Proposition \ref{prop_L2_assumption_on_div_k_zero_in_Omega_m}\footnote{Note that the functional $(\div y^*,z)-\tilde F(z)$ is only upper semicontinuous and is not continuous over $L^2(\Omega)$ and thus we cannot use a density argument to prove that both suprema are equal.}.
Thus, for all $y^*\in H(\div;\Omega)$ we have
\begin{align}\label{defining_equality_for_tildeF_change_of_variables}
F^*(-\Lambda^*y^*)=\tilde F^*(\div y^*)
\end{align}
and additionally $\tilde F^*(\div y^*)$ is equal to the expression in \eqref{Expression_for_Fstar}:
\begin{align*}
&\Tilde F^*(\div y^*)=I_{y^*}(\xi_0)=:\int\limits_{\Omega_2}{f_x^*(\div y^*)dx}
\end{align*}
where $f_x^*:\mathbb R\to\mathbb R$ and for a.e. $x\in\Omega_2$ and for all $ r\in\mathbb R$
%\begin{align*}
%&f_x^*(s)=\left(s+l(x)\right) \left(\ln{\left(\frac{s+l(x)}{k^2(x)}+\sqrt{\left(\frac{s+l(x)}{k^2(x)}\right)^2+1}\right)}-w(x)\right)-k^2(x)\sqrt{\left(\frac{s+l(x)}{k^2(x)}\right)^2+1},\\
%&\quad\forall s\in\mathbb R.
%\end{align*}
\begin{align*}
&f_x^*(r)=\left(r+l(x)\right) \left(\arsinh\left(\frac{r+l(x)}{k^2(x)}\right)-w(x)\right)-k^2(x)\cosh\left(\arsinh\left(\frac{r+l(x)}{k^2(x)}\right)\right).
\end{align*}
  If $\Upsilon_{\tilde F^*}$ is a forcing functional for $\tilde F^*$, then we have
 \begin{align*}
\tilde F^*\left(\frac{\div y^*-\div p^*}{2}\right)+\Upsilon_{\tilde F^*}(\div y^*-\div p^*)\leq \frac{1}{2}\tilde F^*(\div y^*)+\frac{1}{2}\tilde F^*(\div p^*),
\end{align*}
which due to \eqref{defining_equality_for_tildeF_change_of_variables} is exactly the same as 
\begin{align*}
F^*\left(\frac{-\Lambda^*y^*+\Lambda^*p^*}{2}\right)+\Upsilon_{F^*}(-\Lambda^*y^*+\Lambda^*p^*)\leq \frac{1}{2}F^*(-\Lambda^*y^*)+\frac{1}{2}F^*(-\Lambda^*p^*).
\end{align*}
Here $\Upsilon_{F^*}(-\Lambda^*(y^*-p^*)):=\Upsilon_{\tilde F^*}(\div(y^*-p^*)),\,\forall y^*,p^*\in H(\div;\Omega)$.
With this remark, it is clear that we only need to find the forcing functional
$\Upsilon_{\tilde F^*}: L^2(\Omega)\supset\text{R}(\div)\to\mathbb {\overline R}$ where $\text{R}(\div)$ is the range
of the divergence operator as an operator from $[L^2(\Omega)]^d$ to $L^2(\Omega)$. Again, since $\tilde F^*$ is an
integral functional, i.e. $\tilde F^*(\div y^*)=\int\limits_{\Omega_2}{f_x^*(\div y^*)dx}$ it is enough to find a forcing
functional $\Upsilon_{f_x^*}:\mathbb R\to\mathbb R$  for $f_x^*$.

For any $\xi_1,\xi_2\in\mathbb R$ it holds
\begin{align*}
\Upsilon_{f_x^*}(\xi_1-\xi_2)\leq \frac{1}{2}f_x^*(\xi_1)+\frac{1}{2}f_x^*(\xi_2)-f_x^*\left(\frac{\xi_1+\xi_2}{2}\right).
\end{align*}
Since $f_x^*(r)=:h_x^*(r+l(x))$, where 
\[
h_x^*(r)=r \left(\arsinh\left(\frac{r}{k^2(x)}\right)-w(x)\right)-k^2(x)\cosh\left(\arsinh\left(\frac{r}{k^2(x)}\right)\right),\forall r\in\mathbb R,
\]
it suffices to find a forcing functional $\Upsilon_{h_x^*}$ for $h_x^*(r)$ and then define
$\Upsilon_{f_x^*}(\zeta)=\Upsilon_{h_x^*}(\zeta),\,\forall\zeta\in\mathbb R$.
Again, we make the substitution $\zeta=\xi_1-\xi_2$. We have
\begin{align*}
\inf\limits_{\xi_1\in\mathbb R}{\left[\frac{1}{2}h_x^*(\xi_1)+\frac{1}{2}h_x^*(\xi_1-\zeta)-h_x^*\left(\frac{2\xi_1-\zeta}{2}\right)\right]}=0,\,\forall \zeta\in\mathbb R.
\end{align*}
This means that the function $h_x^*$ is not uniformly convex over $\mathbb R$. However, if we restrict $\xi_1,\xi_2$ to a ball $B(0,\delta)\subset \mathbb R^2,\,\delta>0$, then the above infimum is greater than zero, and according to the definition of uniform convexity, $h_x^*$ will be uniformly convex over the ball $B(0,\delta)$. This can be useful in the context of our particular problem when $l\in L^\infty(\Omega_2)$, since from Proposition~\ref{Proposition_2} we have that $\div(\epsilon\nabla  u)+l=\div p^*+l=k^2\sinh(u+w)\in L^\infty(\Omega_2)$. Therefore, if we pick the approximations $y^*\in H(\div;\Omega)$ for the solution $p^*$ of the dual Problem $(P^*)$  with $\div y^*+l=0$ in  $\Omega_1$ and additionally such that $\div y^*+l\in L^\infty(\Omega_2)$ with $-M\leq \div y^*+l\leq M$ for some $M>0$, then for almost each $x\in\Omega_2$
%\begin{eqnarray*}%\label{bounds_for_the_divergence_of_ystar_pstar_for_the_forcing_functional_of_Fstar}
%&-\|k^2\|_{L^\infty(\Omega_2)}\sinh\left(2\|w\|_{L^\infty(\Omega_2)}+\overline e\right)&\leq &\div p^*+l\leq \|k^2\|_{L^\infty(\Omega_2)}\sinh\left(2\|w\|_{L^\infty(\Omega_2)}+\overline e\right),\\ %\label{bounds_for_the_divergence_of_ystar_pstar_for_the_forcing_functional_of_Fstar_a}\\
%&-\|k^2\|_{L^\infty(\Omega_2)}\sinh\left(2\|w\|_{L^\infty(\Omega_2)}+\overline e\right)-M&\leq& \div y^*-\div p^*\\
%&&\leq& \|k^2\|_{L^\infty(\Omega_2)}\sinh\left(2\|w\|_{L^\infty(\Omega_2)}+\overline e\right)+M ,% \label{bounds_for_the_divergence_of_ystar_pstar_for_the_forcing_functional_of_Fstar_b}
%\end{eqnarray*}
\begin{align*}%\label{bounds_for_the_divergence_of_ystar_pstar_for_the_forcing_functional_of_Fstar}
&-\|k^2\|_{L^\infty(\Omega_2)}\sinh\left(2\|w\|_{L^\infty(\Omega_2)}+\overline e\right)\leq \div p^*+l\leq \|k^2\|_{L^\infty(\Omega_2)}\sinh\left(2\|w\|_{L^\infty(\Omega_2)}+\overline e\right),\\ %\label{bounds_for_the_divergence_of_ystar_pstar_for_the_forcing_functional_of_Fstar_a}\\
&-\|k^2\|_{L^\infty(\Omega_2)}\sinh\left(2\|w\|_{L^\infty(\Omega_2)}+\overline e\right)-M\leq \div y^*-\div p^*\\
&\leq \|k^2\|_{L^\infty(\Omega_2)}\sinh\left(2\|w\|_{L^\infty(\Omega_2)}+\overline e\right)+M ,% \label{bounds_for_the_divergence_of_ystar_pstar_for_the_forcing_functional_of_Fstar_b}
\end{align*}
and thus we can choose $\delta=\max{\{\|k^2\|_{L^\infty(\Omega_2)}\sinh\left(2\|w\|_{L^\infty(\Omega_2)}+\overline e\right),M\}}$ and %the $l^\infty$ ''ball''
$B(0,\delta)=[-\delta,\delta]^2$ in $\mathbb R^2$.
In this case, for any $-2\delta\leq\zeta\leq 2\delta$
\begin{align*}
\Upsilon_{h_x^*}(\zeta)=\inf\limits_{-\delta\leq \xi_1\leq\delta}{\left[\frac{1}{2}h_x^*(\xi_1)+\frac{1}{2}h_x^*(\xi_1-\zeta)-h_x^*\left(\frac{2\xi_1-\zeta}{2}\right)\right]}>0.
\end{align*}
Now, depending on $k$ and the above defined $\delta$, one can find a constant $C_1$ such that for all $\xi_1\in[-\delta,\delta]$ and $\zeta\in [-2\delta,2\delta]$ the following inequality is satisfied
\begin{align*}
C_1\zeta^2&\leq \frac{1}{2}h_x^*(\xi_1)+\frac{1}{2}h_x^*(\xi_1-\zeta)-h_x^*\left(\frac{2\xi_1-\zeta}{2}\right).
\end{align*}
This means that %if $\div y^*,\div p^*$ satisfy \eqref{bounds_for_the_divergence_of_ystar_pstar_for_the_forcing_functional_of_Fstar_a} and \eqref{bounds_for_the_divergence_of_ystar_pstar_for_the_forcing_functional_of_Fstar_b} 
we can define $\Upsilon_{f_x^*}(\zeta)=\Upsilon_{h_x^*}(\zeta)=C_1\zeta^2$ and consequently the forcing functional
$\Upsilon_{F^*}$ as follows
\begin{align}\label{Forcing_functional_FStar}
&\Upsilon_{F^*}(-\Lambda^* y^*+\Lambda^* p^*)=\Upsilon_{\tilde F^*}(\div y^*-\div p^*)\nonumber\\
&=\int\limits_{\Omega_2}{\Upsilon_{f_x^*}(\div y^*-\div p^*) dx}=\int\limits_{\Omega_2}{C_1(\div y^*-\div p^*)^2dx}.
\end{align}

%For example, when $\xi_1\in[-10,10]$ and $\zeta\in [-5,5]$, we can take $C_1=0.01$, $C_2=25$, $C_3=10$, $\alpha=1$.
\subsection{Error measures}\label{Error_measures}
In this section, we apply the abstract framework from Section \ref{Section_Abstract_Framework} and derive explicit forms of
relations~\eqref{Func_a_posteriori_estimate_with_forcing_functionals} and~\eqref{Functional_error_equality} adapted to our problem.
Using \eqref{Quadratic_lower_bound_for_Upsilon_F} and
\eqref{Forcing_functional_FStar}, for any $v\in H_0^1(\Omega)$ and $y^*\in Y_M^*$, where
$$Y_M^*:=\{y^*\in H(\div;\Omega), \text{ s.t. } \div y^*+l=0 \text{ in } \Omega_1 \text{ and } -M\leq \div y^*+l\leq M \text{ in } \Omega_2\},$$ the estimate \eqref{Func_a_posteriori_estimate_with_forcing_functionals} takes the form
%\begin{align}\label{Explicit_form_of_Error_Estimate_with_Forcing_Functionals}
%\begin{aligned}
%&\frac{1}{8}\int\limits_{\Omega}{\epsilon|\nabla (v-u)|^2dx}+\frac{1}{8}\int\limits_{\Omega}{\frac{1}{\epsilon}|y^*-p^*|^2dx}+\frac{1}{8}\int\limits_{\Omega}{k^2(v- u)^2dx}+C_1\int\limits_{\Omega}{|\div y^*-\div  p^*|^2dx}\\
%&\leq \frac{1}{8}\int\limits_{\Omega}{\epsilon|\nabla (v-u)|^2dx}+\frac{1}{8}\int\limits_{\Omega}{\frac{1}{\epsilon}|y^*-p^*|^2dx}\\
%&+\int\limits_{\Omega}{k^2\left[\cosh\left(\frac{v-u}{2}\right)-1\right]dx}+C_1\int\limits_{\Omega}{|\div y^*-\div p^*|^2dx}\leq \frac{1}{2}M_{\oplus}^2(v,y^*),
%\end{aligned}
%\end{align}
\begin{align}\label{Explicit_form_of_Error_Estimate_with_Forcing_Functionals}
&\frac{1}{8}\int\limits_{\Omega}{\epsilon|\nabla (v-u)|^2dx}+\frac{1}{8}\int\limits_{\Omega}{\frac{1}{\epsilon}|y^*-p^*|^2dx}+\frac{1}{8}\int\limits_{\Omega}{k^2(v- u)^2dx}+C_1\int\limits_{\Omega}{|\div y^*-\div  p^*|^2dx}\nonumber\\
&\leq \frac{1}{8}\int\limits_{\Omega}{\epsilon|\nabla (v-u)|^2dx}+\frac{1}{8}\int\limits_{\Omega}{\frac{1}{\epsilon}|y^*-p^*|^2dx}\\
&+\int\limits_{\Omega}{k^2\left[\cosh\left(\frac{v-u}{2}\right)-1\right]dx}+C_1\int\limits_{\Omega}{|\div y^*-\div p^*|^2dx}\leq \frac{1}{2}M_{\oplus}^2(v,y^*)\nonumber,
\end{align}
where the constant $C_1$ depends on $k$, $\|w\|_{L^\infty(\Omega_2)}$, $\overline e$, and $M$. The quantity $M_{\oplus}^2(v,y^*)$ is fully computable and is given by the relation
\begin{align}\label{Explicit_form_of_Nonlinear_Majorant}
\begin{aligned}
M_{\oplus}^2(v,y^*)&=D_G(\Lambda v,y^*)+D_F(v,-\Lambda^*y^*)\\
&=G(\Lambda v)+G^*(y^*)-\langle y^*,\Lambda v\rangle+F(v)+F^*(-\Lambda^*y^*)+\langle \Lambda^*y^*,v\rangle\\
&=\int\limits_{\Omega}{\eta^2(x)dx}=\frac{1}{2}\vertiii{\epsilon\nabla v-y^*}_*^2+D_F(v,-\Lambda^*y^*),
\end{aligned}
\end{align}
where
\begin{align}\label{integrand_of_the_Majorant}
 \begin{aligned}
    \eta^2(x)=\left\{
                \begin{array}{ll}
                \frac{1}{2\epsilon}|\epsilon\nabla v-y^*|^2,\, x\in \Omega_1\\
                 \frac{1}{2\epsilon}|\epsilon\nabla v-y^*|^2+k^2\cosh(v+w)-lv\\
+k^2\rho_k(y^*)\left(\ln{\left(\Theta\left(\rho_k(y^*)\right)\right)}-w\right)
-k^2\sqrt{\rho_k^2(y^*)+1}-\div y^* v,\, x\in \Omega_2
                \end{array}
              \right.
\end{aligned}
\end{align}
%\eta^2(x)&=\frac{1}{2\epsilon}|\epsilon\nabla v-y^*|^2+k^2\cosh(v+w)-lv\\
%&+\mathbbm{1}_{\Omega_2}\left[k^2\rho_k(y^*)\left(\ln{\left(\Theta\left(\rho_k(y^*)\right)\right)}-w\right)
%-k^2\sqrt{\rho_k^2(y^*)+1}-\div y^* v\right]
It is clear that $\eta^2(x)\ge 0$ since it is the sum of the compound functionals generated by $\tilde g_x(s):=g(x,s)$ and $\tilde B_x(s)-l(x)s=B(x,s+w(x))-l(x)s$ evaluated at $(\nabla v(x),y^*(x))$ and $(v(x),\div y^*(x))$ respectively. It therefore qualifies as an error indicator, provided that $y^*$ is chosen appropriately, which we demonstrate with numerical experiments in the next section.
One can also work with the space $Y_\infty^*:=\{y^*\in H(\div;\Omega) \text{ s.t. } \div y^*+l=0 \text{ in }\Omega_2\}$ instead of $Y_M^*$. In this case, $F^*$ does not posses a nonzero forcing functional and we skip the term with $C_1$ in \eqref{Explicit_form_of_Error_Estimate_with_Forcing_Functionals}.

Using the expression for $G^*$, we obtain
\begin{align}\label{DGvpStar}
D_G(\Lambda v,p^*)=\frac{1}{2}\int\limits_{\Omega}{\epsilon |\nabla (v-u)|^2 dx}=:\frac{1}{2}\vertiii{\nabla(v-u)}^2
\end{align}
and 
\begin{align}\label{DGuyStar}
D_G(\Lambda u,y^*) =\frac{1}{2}\int\limits_{\Omega}{\frac{1}{\epsilon}|y^*-p^*|^2 dx}=:\frac{1}{2}\vertiii{y^*-p^*}_*^2.
\end{align}
Now, we find explicit expressions for the nonlinear measures $D_F(v,-\Lambda^*p^*)$ and $D_F(u,-\Lambda^*y^*)$ similar to the ones for the case of quadratic $F$ in \eqref{Compound_Functionals_Quadratic_Case} for the linear elliptic equation $-\div(A\nabla u) + u =l$. First, we prove the following assertion:
\begin{proposition}\label{Proposition_Main_Algebraic_Inequality_for_DF}
For all $s,t\in\mathbb R$ it holds 
\begin{align}\label{Main_Algebraic_Inequality_for_DF}
\frac{(t-s)^2}{2}\leq A(s,t) \leq \frac{(\sinh(t)-\sinh(s))^2}{2},
\end{align}
where $A(s,t)=\cosh(t)-\cosh(s)+s\sinh(s)-t\sinh(s)$.
\end{proposition}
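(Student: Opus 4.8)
The plan is to fix $s$ and treat both inequalities as statements about the single-variable function $t\mapsto A(s,t)$. I would first observe that, rewriting the definition, $A(s,t)=\cosh t-\cosh s-\sinh s\,(t-s)$ is precisely the Bregman divergence of the convex function $\cosh$ between $t$ and $s$; the two desired bounds then read as a strong-convexity lower estimate (exploiting $\cosh''=\cosh\ge 1$) and a ``gradient'' upper estimate expressed through $\sinh=\cosh'$. The elementary facts driving both arguments are $\partial_t A(s,t)=\sinh t-\sinh s$ together with $A(s,s)=0$ and $\partial_t A(s,t)\big|_{t=s}=0$, which say that $t=s$ is a doubly degenerate point.

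For the lower bound I would introduce the auxiliary function $\phi(t):=A(s,t)-\tfrac12(t-s)^2$. One computes $\phi(s)=0$, $\phi'(t)=\sinh t-\sinh s-(t-s)$ with $\phi'(s)=0$, and $\phi''(t)=\cosh t-1\ge 0$. Hence $\phi$ is convex with a critical point at $t=s$, so $t=s$ is a global minimum and $\phi\ge\phi(s)=0$, which is exactly the left inequality.

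For the upper bound I would set $\psi(t):=\tfrac12(\sinh t-\sinh s)^2-A(s,t)$, so that $\psi(s)=0$. The key computation is the factorization
\[
\psi'(t)=(\sinh t-\sinh s)\cosh t-(\sinh t-\sinh s)=(\sinh t-\sinh s)(\cosh t-1).
\]
Since $\cosh t-1\ge 0$ for all $t$ and $\sinh$ is strictly increasing, $\psi'(t)$ has the same sign as $t-s$; thus $\psi$ decreases on $(-\infty,s]$ and increases on $[s,\infty)$, so $t=s$ is again a global minimum and $\psi\ge 0$, giving the right inequality.

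The only place that needs care is the upper bound: unlike $\phi$, the function $\psi$ need not be convex, so the convexity shortcut is unavailable and one must instead read off the sign of $\psi'$ directly. The factorization $\psi'(t)=(\sinh t-\sinh s)(\cosh t-1)$ is what makes this monotonicity analysis immediate, and spotting it is the main (and rather modest) step; everything else is routine differentiation together with the observation that both auxiliary functions vanish along with their first derivatives at $t=s$.
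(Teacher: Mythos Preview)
Your proof is correct and follows the same overall strategy as the paper: introduce the auxiliary differences and show they are nonnegative by one-variable calculus. For the lower bound your argument is essentially identical to the paper's (they locate the unique critical point $t=s$ of $A_1=A-\tfrac12(t-s)^2$ and check the second derivative, whereas you phrase this as convexity of $\phi$).

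For the upper bound there is a small but genuine difference in execution. The paper fixes $t$ and differentiates $A_2(s,t)=\tfrac12(\sinh t-\sinh s)^2-A(s,t)$ in $s$, obtaining $\partial_s A_2=\cosh s\,(\sinh s-\sinh t-s+t)$; this forces a case split $t=0$ versus $t\neq 0$ because the second derivative at the critical point $s=t$ equals $\cosh t(\cosh t-1)$ and vanishes when $t=0$, so that case is handled separately. You instead fix $s$ and differentiate in $t$, which yields the cleaner factorization $\psi'(t)=(\sinh t-\sinh s)(\cosh t-1)$ and lets you read off the sign of $\psi'$ directly without any case distinction or second-derivative test. Both routes are elementary; yours is a bit more economical here.
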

\begin{proof}
For the first inequality, denote $A_1(s,t):=A(s,t)-\frac{(t-s)^2}{2}$. We prove that for any fixed $s\in\mathbb R$, $A_1(s,t)\ge 0$ for all $t\in\mathbb R$. If $s=0$, we have $\cosh(t)-1\ge \frac{t^2}{2}$ for all $t\in\mathbb R$. If $s\neq 0$, the necessary condition for a minimum in $t$ is $\frac{\partial A_1}{\partial t}(s,t)=0$ which is equivalent to $\sinh(t)-\sinh(s)-t+s=0$. The only solution of this equation is $t=s$ because the function $\sinh(t)-t$ is strictly monotonically increasing. It is left to observe that at $t=s$ we have $\frac{\partial^2 A_1}{\partial t^2}=\cosh(s)-1>0$ and that $A_1(s,t=s) = 0$.
For the second inequality, denote $A_2(s,t):=\frac{(\sinh(t)-\sinh(s))^2}{2}- A(s,t)$.  If $t=0$, the inequality $A_2(s,0)\ge 0$ reduces to the inequality $q(s):=\frac{\sinh^2(s)}{2}-1+\cosh(s)-s\sinh(s)\ge 0$ which is true since  the minimum of the function $q(s)$ is $0$. If $t\neq 0$, the necessary condition for a minimum in $s$ is $\frac{\partial A_2}{\partial s}=0$ which is equivalent to $\cosh(s)(\sinh(s)-\sinh(t)-s+t)=0$. The only solution of this equation is $s=t$. Now, it is left to observe that at $s=t$ we have $\frac{\partial^2 A_2}{\partial s^2}=\cosh(t)(\cosh(t)-1)>0$ and that $A_2(s=t,t)=0$.
\end{proof}
Since for the exact solution $u$ we have $\rho_k(p^*)=\sinh(u+w)$ and $u=\arsinh{\left(\rho_k(p^*)\right)}-w$ a.e. in $\Omega_2$, we find that
%\begin{align}\label{Expression_for_DFvMinusLambdaStarpStar}
$$
\begin{aligned}
D_F(v,-\Lambda^*p^*)
&=\int\limits_{\Omega_2}{\left(k^2\cosh(v+w)-lv+k^2\sinh(u+w)u-k^2\cosh(u+w)-\div p^* v\right)dx} \\
&=\int\limits_{\Omega_2}{k^2\left(\cosh(v+w)-\cosh(u+w)+u\sinh(u+w)-v\sinh(u+w)\right)dx}.
\end{aligned}
$$
%\end{align}
Similarly,
%\begin{align}\label{Expression_for_DFuMinusLambdaStaryStar}
%\begin{aligned}
%&D_F(u,-\Lambda^*y^*)\\
%&=\int\limits_{\Omega_2}{k^2\cosh\left(\arsinh\left(\frac{\div p^*+l}{k^2}\right)\right)-k^2\cosh\left(\arsinh\left(\frac{\div y^*+l}{k^2}\right)\right)+(\div y^*+l)\arsinh\left(\frac{\div y^*+l}{k^2}\right) -(\div y^*+l)\arsinh\left(\frac{\div p^*+l}{k^2}\right)}
%\end{aligned}
%\end{align}
%
%\begin{align}\label{Expression_for_DFuMinusLambdaStaryStar}
$
\begin{aligned}
&D_F(u,-\Lambda^*y^*)=\int_{\Omega_2}{k^2\left(\cosh(T)-\cosh(S)+S\sinh(S) -T\sinh(S)\right)dx},
\end{aligned}
$
%\end{align}
where  $T:=\arsinh\left(\rho_k(p^*)\right)$.
The nonlinear quantities $D_F(v,-\Lambda^*p^*)$ and $D_F(u,-\Lambda^*y^*)$
measure the error in $v$ and in $\div y^*$, respectively.
Using inequality~\eqref{Main_Algebraic_Inequality_for_DF}, we can represent these two measures in a form, which resembles the corresponding
estimates in the case~\eqref{Compound_Functionals_Quadratic_Case} of a quadratic functional $F$, namely,
\begin{align}\label{inequality_for_DFvMinusLambdaStarpStar}
\int\limits_{\Omega_2}{\frac{k^2}{2}(v-u)^2 dx}\leq D_F(v,-\Lambda^*p^*)\leq \int\limits_{\Omega_2}{\frac{k^2}{2}(\sinh(v+w)-\sinh(u+w))^2 dx}
\end{align}
and 
\begin{align}\label{inequality_for_DFuMinusLambdaStaryStar}
\int\limits_{\Omega_2}{\frac{k^2}{2}(T-S)^2 dx}\leq D_F(u,-\Lambda^*y^*)\leq \int\limits_{\Omega_2}{\frac{1}{2k^2}(\div p^*-\div y^*)^2 dx}.
\end{align}
Note that for $k\ge k_{\min}>0$ in $\Omega$ the equivalences
$\int\limits_{\Omega}{\frac{k^2}{2}(v-u)^2 dx} \eqsim \|v-u\|_{L^2(\Omega)}^2$ and
$\int\limits_{\Omega}{\frac{1}{2k^2}(\div p^*-\div y^*)^2 dx} \eqsim \|\div y^*-\div p^*\|_{L^2(\Omega)}^2$
hold.
Moreover, replacing the nonlinear term $k^2\sinh(u+w)$ with $u$, the inequalities \eqref{inequality_for_DFvMinusLambdaStarpStar}
and \eqref{inequality_for_DFuMinusLambdaStaryStar} reduce to the equalities for $D_F(v,-\Lambda^*p^*)$ and $D_F(u,-\Lambda^*y^*)$
in~\eqref{Compound_Functionals_Quadratic_Case} because in this case the inverse function of $f(x)=x$ is again $f(x)$. The functions on the left-hand side, in the middle, and on the right-hand side in the inequality \eqref{Proposition_Main_Algebraic_Inequality_for_DF} are depicted on Figure \ref{Proposition_3p2_Inequality}.
\begin{figure}
 \centering
        \centering
        \captionsetup{width=.7\linewidth}
        \includegraphics[width=0.8\linewidth]{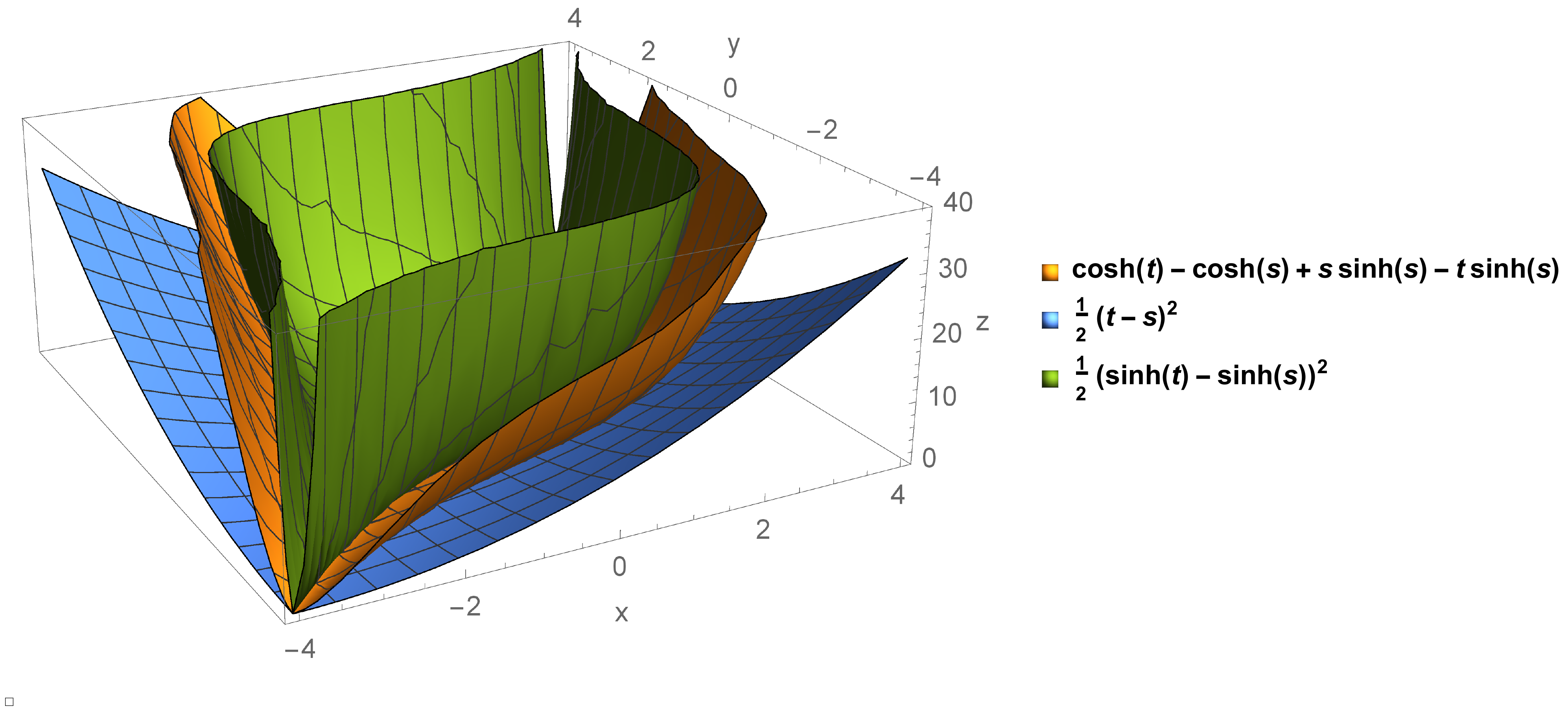}
        \caption{Constituting functions in the inequality \eqref{Main_Algebraic_Inequality_for_DF}.}
        \label{Proposition_3p2_Inequality}
\end{figure}
Further, if $v$ is in a $\delta_1$-neighborhood of $u$ in $L^\infty(\Omega)$ norm, then we can find a constant 
$C_1\left(\delta_1,\|u\|_{L^\infty(\Omega)}\right)>1$ such that
\begin{align}\label{Upper_Quadratic_Bound_for_DFvMinusLambdaStarpStar}
\int\limits_{\Omega_2}{\frac{k^2}{2}(\sinh(v+w)-\sinh(u+w))^2 dx}\leq C_1\left(\delta_1,\|u\|_{L^\infty(\Omega)}\right)\int\limits_{\Omega_2}{\frac{k^2}{2}(v-u)^2 dx}.
\end{align}
 Analogously, if $l\in L^\infty(\Omega_2)$ and $\|\div (y^*-p^*)\|_{L^\infty(\Omega)}\leq \delta_2$ (recall that when $l\in L^\infty(\Omega_2)$, $\div p^* $ is in $L^\infty(\Omega)$), then we can find a constant $C_2\left(\delta_2,\|\div p^*\|_{L^\infty(\Omega)}\right)<1$ such that 
 \begin{align}\label{Lower_Quadratic_Bound_for_DFuMinusLambdaStaryStar}
 C_2\left(\delta_2,\|\div p^*\|_{L^\infty(\Omega)}\right)\int\limits_{\Omega_2}{\frac{1}{2k^2}(\div p^*-\div y^*)^2 dx}\leq \int\limits_{\Omega_2}{\frac{k^2}{2}(T-S)^2 dx}.
\end{align}
Notice that if $k^2\ge k_{\min}>0$ in $\Omega$, then everywhere in \eqref{inequality_for_DFvMinusLambdaStarpStar}, \eqref{inequality_for_DFuMinusLambdaStaryStar}, \eqref{Upper_Quadratic_Bound_for_DFvMinusLambdaStarpStar}, and \eqref{Lower_Quadratic_Bound_for_DFuMinusLambdaStaryStar},
the integrals are taken over the entire domain $\Omega$. Now, the abstract error identity \eqref{Functional_error_equality} takes the form
\begin{align}\label{Explicit_form_of_Functional_error_equality}
&\mathrel{\phantom{=}}\frac{1}{2}\vertiii{\nabla(u-v)}^2+\frac{1}{2}\vertiii{p^*-y^*}_*^2\nonumber\\
&+ \int\limits_{\Omega_2}{\frac{k^2}{2}(v-u)^2 dx}+C_2\left(\delta_2,\|\div p^*\|_{L^\infty(\Omega)}\right)\int\limits_{\Omega_2}{\frac{1}{2k^2}(\div p^*-\div y^*)^2 dx}\nonumber\\
&\leq \underline{\frac{1}{2}\vertiii{\nabla(u-v)}^2+\frac{1}{2}\vertiii{p^*-y^*}_*^2+D_F(v,-\Lambda^*p^*)+D_F(u,-\Lambda^*y^*)=M_{\oplus}^2(v,y^*)}\\
&\leq \frac{1}{2}\vertiii{\nabla(u-v)}^2+\frac{1}{2}\vertiii{p^*-y^*}_*^2\nonumber\\
&+C_1\left(\delta_1,\|u\|_{L^\infty(\Omega)}\right)\int\limits_{\Omega_2}{\frac{k^2}{2}(v-u)^2 dx}+\int\limits_{\Omega_2}{\frac{1}{2k^2}(\div y^*-\div p^*)^2 dx}\nonumber
\end{align}
where we have used that $p^*=\epsilon\Lambda u=\epsilon\nabla  u$.
Relation \eqref{Explicit_form_of_Functional_error_equality} shows that the computable majorant $M_\oplus^2(v,y^*)$
is bounded from below and above  by a multiple of one and the same error norm. Note that the left-hand side inequality
in~\eqref{Explicit_form_of_Functional_error_equality} is a stronger version of the left-hand inequality
in~\eqref{Explicit_form_of_Error_Estimate_with_Forcing_Functionals}.
Since $D_F(v,-\Lambda^*p^*)\ge 0$ and $D_F(u,-\Lambda^*y^*)\ge 0$ we also obtain a guaranteed bound on the error
in the combined energy norm:
\begin{align}\label{Upper_Bound_CEN_Error}
&\vertiii{\nabla( u-v)}^2+\vertiii{p^*-y^*}_*^2\leq 2M_{\oplus}^2(v,y^*).
\end{align}
From the pointwise equality
\begin{align}\label{Second_pointwise_error_equality}
&\mathrel{\phantom{=}}\frac{1}{\epsilon}|\epsilon\nabla v-y^*|^2=\frac{1}{\epsilon}\left|\epsilon\nabla(v-u)-(y^*- p^*)\right|^2\nonumber\\
&=\epsilon|\nabla(v-u)|^2+\frac{1}{\epsilon}|y^*- p^*|^2-2(y^*-p^*)\cdot\nabla(v- u),
\end{align}
after applying Young's inequality and integrating over $\Omega$, we obtain a lower bound for the error in combined energy norm:
\begin{align}\label{two_sided_bound_for_combined_energy_norm_error}
\begin{aligned}
&\frac{1}{2}\vertiii{\epsilon \nabla v-y^*}_*^2\leq \vertiii{\nabla (v- u)}^2+\vertiii{y^*- p^*}_*^2
\end{aligned}
\end{align}
\begin{remark}
Integrating \eqref{Second_pointwise_error_equality} over $\Omega$  we obtain the algebraic identity
\begin{align}\label{Prager_Synge_Equality}
\vertiii{\epsilon\nabla v-y^*}_*^2=\vertiii{\nabla(v-u)}^2 + \vertiii{y^*-p^*}_*^2 - 2\int\limits_{\Omega}{(y^*-p^*)\cdot\nabla(v-u) dx},
\end{align}
from which the Prager-Synge identity is derived.
Comparing the last relation with \eqref{Explicit_form_of_Functional_error_equality}, by using the fact that
$M_\oplus(v,y^*)^2=\frac{1}{2}\vertiii{\epsilon \nabla v-y^*}_*^2+D_F(v,-\Lambda^*y^*)$, we arrive at the relation
\begin{align}\label{relation_DFs_and_integral_term_in_Prager_Synge_extended_equality_2}
D_F(v,-\Lambda^*y^*)=D_F(v,-\Lambda^* p^*)+D_F( u,-\Lambda^*y^*)+\int\limits_{\Omega}{(y^*- p^*)\cdot \nabla (v-u) dx}.
\end{align}
From here, it is seen that if the integral on the right-hand side is small compared to the other terms, then the error
in $v$ and $\div y^*$ measured with $D_F(v,-\Lambda^*p^*)+D_F(u,-\Lambda^*y^*)$ is controlled mainly by the
computable term $D_F(v,-\Lambda^*y^*)$ in the majorant $M_\oplus^2(v,y^*)$. Moreover, \eqref{Prager_Synge_Equality}
enables us to give a practical estimation of the error in combined energy norm, which is very close to the real error in all
of the experiments that we have conducted.
\end{remark}
%{\color{red}\underline{Comment:} For example, such situation may arise if under uniform refinement with $P_1$ Lagrangian elements we have $\|v-u\|_{L^2(\Omega)}=O(h^2)$, and $y^*$ is partially equilibrated. Then $\int\limits_{\Omega}{(y^*- p^*)\cdot \nabla (v-u) dx}=O(h^3)$, $D_F(v,-\Lambda^*p^*)=O(h^4)$, $D_F( u,-\Lambda^*y^*)=O(h^2)$, $\vertiii{\nabla (v- u)}^2=O(h^2)$, $\vertiii{y^*- p^*}_*^2=O(h^2)$.}
We end this section by presenting a near best approximation result. Contrary to the result in \cite[Theorem 6.2]{Chen2006b},
we do not make any restrictive assumptions on the meshes to ensure that the finite element approximations $u_h$ are uniformly
bounded in $L^\infty$ norm. In our considerations, let $V_h$ be a closed subspace of $H_0^1(\Omega)$ and let $u_h$ be the
unique minimizer of $J$ over $V_h$, which is also the unique solution of the Galerkin problem
\begin{align}\label{Discrete_Weak_Formulation}
&\text{Find } u_h\in V_h \text{ such that }\nonumber\\
&a(u_h,v)+\int\limits_{\Omega}{b(x,u_h+w)v dx}=(l,v),\,\text{ for all } v\in V_h\cap L^\infty(\Omega).
\end{align}
Then, using \eqref{primal_dual_parts_of_error_2} and the expression \eqref{DGvpStar} for $D_G(\Lambda v,p^*)$,
for any $v\in V_h$ we can write
\begin{align*}
&\mathrel{\phantom{=}}\vertiii{\nabla (u_h-u)}^2+2D_F(u_h,-\Lambda^*p^*)=2\left(J(u_h)-J(u)\right)\\
&\leq 2\left(J(v)-J(u)\right)=\vertiii{\nabla (v-u)}^2+2D_F(v,-\Lambda^*p^*).
\end{align*}
Since $2D_F(u_h,-\Lambda^*p^*)\ge 0$, we obtain
\begin{align}\label{quasi_optimal_a_priori_error_estimate_1}
\vertiii{\nabla (u_h-u)}^2\leq \inf\limits_{v\in V_h}{\bigg\{\vertiii{\nabla (v-u)}^2+\int\limits_{\Omega_2}{k^2(\sinh(v+w)-\sinh(u+w))^2 dx}\bigg\}}
\end{align}
where we have used \eqref{inequality_for_DFvMinusLambdaStarpStar}. Since we use the finite element method with $P_1$ Lagrange elements, let $V_h$ be the corresponding space where $h$ refers to the maximum element size. With $I_h(\varphi)$ we denote the Lagrange finite element interpolant of $\varphi\in C^0(\Omega)$. Using \eqref{quasi_optimal_a_priori_error_estimate_1} we can show unqualified convergence of the finite element approximations $u_h$ to $u$ when $h\to 0$. Let $\varepsilon >0$ and $\bar u\in C_0^\infty(\Omega)$ is such that $\|\nabla(\bar u-u)\|_{L^2(\Omega)}\leq \varepsilon$ and $\|\bar u\|_{L^\infty(\Omega)}\leq \|u\|_{L^\infty(\Omega)}+2$. Also, let $L$ be the Lipschitz constant in the inequality $|\sinh(s)-\sinh(t)|\leq L|s-t|$ for all $s,t \in \left[-2\|w\|_{L^\infty(\Omega_2)}-\overline e-2, 2\|w\|_{L^\infty(\Omega_2)}+\overline e+2\right]$.
Then by applying the triangle inequality together with Young's inequality, we obtain
\begin{align}\label{inequality_2}
%\begin{aligned}
&\mathrel{\phantom{=}}\vertiii{\nabla (u_h-u)}^2\leq 2\left(\vertiii{\nabla (I_h(\bar u)-\bar u)}^2+\vertiii{\nabla (\bar u-u)}^2\right)\\
&+2\left(\int\limits_{\Omega}{k^2(\sinh(I_h(\bar u)+w)-\sinh(\bar u+w))^2dx}+\int\limits_{\Omega}{k^2(\sinh(\bar u+w)-\sinh(u+w))^2dx}\right).\nonumber
%\end{aligned}
\end{align}
For the first term in \eqref{inequality_2}, by assuming mesh regularity, we have 
\begin{align*}
&\vertiii{\nabla (I_h(\bar u)-\bar u)}^2+\vertiii{\nabla (\bar u-u)}^2\leq \epsilon_{\max}\left(C|\bar u|_2^2h^2+\varepsilon^2\right)
\end{align*}
where $|\bar u|_2$ denotes the $H^2$ seminorm of $\bar u$ and $C>0$ is a constant depending on the mesh regularity. Using the fact that $\|I_h(\bar u)\|_{L^\infty(\Omega)}\leq \|\bar u\|_{L^\infty(\Omega)}\leq \|u\|_{L^\infty(\Omega)}+2$, for the second term in \eqref{inequality_2} we obtain the upper bound
%To estimate the second term in \eqref{inequality_2}, we use \eqref{Upper_Quadratic_Bound_for_DFvMinusLambdaStarpStar}, where we note that $\|I_h(\bar u)-\bar u\|_{L^\infty(\Omega)}\leq 2\|\bar u\|_{L^\infty(\Omega)}\leq 2\left(\|w\|+\overline e\right)$ and $\|\bar u-u\|_{L^\infty(\Omega)}\leq 2\left(\|w\|_{L^\infty(\Omega)}+2\overline e\right)+2$ because $\|I_h(\bar u)\|_{L^\infty(\Omega)}\leq \|\bar u\|_{L^\infty(\Omega)}$.
\begin{align*}
 &2k_{\max}^2L^2\left( \|I_h(\bar u)-\bar u\|_{L^2(\Omega)}^2+ \|\bar u- u\|_{L^2(\Omega)}^2\right)\\
 &\leq 2 k_{\max}^2L^2 C_F^2\left( \|\nabla(I_h(\bar u)-\bar u)\|_{L^2(\Omega)}^2+ \|\nabla(\bar u- u)\|_{L^2(\Omega)}^2 \right)\leq  2 k_{\max}^2L^2 C_F^2 \left(C|\bar u|_2^2 h^2+\varepsilon^2\right).
\end{align*}
This shows that the right-hand side of \eqref{inequality_2} can be made as small as desired provided that we choose $\varepsilon$ and $h$ small enough and therefore $\vertiii{\nabla (u_h-u)}\to 0$ when $h\to 0$. Moreover, \eqref{quasi_optimal_a_priori_error_estimate_1} can be also used to obtain qualified convergence of $u_h$ in energy norm under additional assumptions on the interface $\Gamma$, the meshes, and the regularity of $u$.

\section{Numerical results}
In the following we present numerical examples illustrating the functional a posteriori error equality \eqref{Explicit_form_of_Functional_error_equality} as well as the constituting terms of the equality. All numerical experiments are carried out in FreeFem++ developed and maintained by Frederich Hecht \cite{FreeFem} and all pictures are generated in VisIt \cite{VisIt}. We solve adaptively the homogeneous nonlinear Problem \eqref{PBE_special_form_regular_nonlinear_part} with $w:=w_{h_{ref}}=g-z_{h_{ref}}$ where $z_{h_{ref}}$ is a good Galerkin finite element approximation of the solution $z$ of
\begin{subequations}\label{PBE_Reference_Solution}
\begin{eqnarray}%\label{PBE_Reference_Solution}
-\nabla\cdot(\epsilon \nabla z)&=&-k^2\sinh(g)+l  \quad\text{in } \Omega_1\cup \Omega_2,\\
	\left[z\right]_\Gamma&=&0,\\
	 \left[\epsilon\frac{\partial z}{\partial n}\right]_\Gamma&=&0,\\
	 z&=&0,\quad \text{on } \partial \Omega,
	  \end{eqnarray}
 \end{subequations}
for given functions $g$ and $l$. We compare the accuracy of the adaptively computed solution $u_{h}$ of \eqref{PBE_special_form_regular_nonlinear_part} for $w=w_{h_{ref}}$ to the reference solution $z_{h_{ref}}$. The adaptive mesh refinement is based on the error indicator $\|\sqrt{2}\eta\|_{L^2(O_i)}$ where the function $\eta$ is defined in \eqref{integrand_of_the_Majorant} and $\eta^2$ is the integrand of the majorant $M_\oplus^2(v,y^*)$. The factor $\sqrt{2}$ accounts for the factor $2$ in \eqref{Upper_Bound_CEN_Error}. More precisely, we find approximations $u_h$ to the exact solution $u\in H_0^1(\Omega)$ of
\begin{align}\label{test_examples_setup}
\int\limits_{\Omega}{\epsilon\nabla u\cdot\nabla v dx}+\int\limits_{\Omega}{b(x,u+w_{h_{ref}})vdx}=\int\limits_{\Omega}{lv dx}=0,\,\forall v\in H_0^1(\Omega).
\end{align}
In all examples, we used piecewise constant parameters $\epsilon$ and $k$, and for $y^*\in H(\div;\Omega)$, we used a patchwise equilibrated reconstruction of the numerical flux $\epsilon \nabla u_{h}$ based on \cite{Braess_Schoberl_2006}. More precisely, we find $y^*$ in the Raviart-Thomas space $RT_0$ over the same mesh, such that its divergence is equal to the $L^2$ orthogonal projection of $k^2\sinh(u_h+w)+l$ onto the space of piecewise constants. 

Recall that 
\[
M_\oplus^2(v,y^*)=M_\oplus^2(v,p^*)+M_\oplus^2(u,y^*),
\]
where $M_\oplus^2(v,y^*)=\frac{1}{2}\vertiii{\epsilon\nabla v-y^*}_*^2+D_F(v,-\Lambda^*y^*)$ and $M_\oplus^2(v,p^*)=J(v)-J(u) = \frac{1}{2}\vertiii{\nabla(v-u)}^2+D_F(v,-\Lambda^*p^*)$ is the primal error, whereas $M_\oplus^2(u,y^*)=I^*(p^*)-I^*(y^*)=\frac{1}{2}\vertiii{y^*-p^*}_*^2+D_F(u,-\Lambda^*y^*)$ is the dual error. Further, we use $v$ for the approximate solution $u_{h}$ and $u$ for the reference solution $z_{h_{ref}}$ and define the efficiency index of the lower bound for the error in combined energy norm \eqref{two_sided_bound_for_combined_energy_norm_error} by 
\[
I_{\text{Eff}}^{\text{CEN,Low}}:=\frac{\frac{\sqrt{2}}{2}\vertiii{\epsilon\nabla v-y^*}_* }{\sqrt{\vertiii{\nabla(v-u)}^2 +\vertiii{y^*-p^*}_*^2 }}.
\]
Similarly, 
\[
I_{\text{Eff}}^{\text{CEN,Up}}:=\frac{\sqrt{2M_\oplus^2(v,y^*)}}{\sqrt{ \vertiii{\nabla(v-u)}^2+\vertiii{y^*-p^*}_*^2  }}
\]
defines the efficiency index of the upper bound \eqref{Upper_Bound_CEN_Error} for the error in combined energy norm,
\[
I_{\text{Eff}}^{\text{E}}:=\frac{\sqrt{2M_\oplus^2(v,y^*) } }{\vertiii{\nabla(v-u)}}
\]
defines the efficiency index of the upper bound for the error in energy norm, and  
\[
P_{\text{rel}}^{\text{CEN}}:=\frac{\vertiii{\epsilon\nabla v-y^*}_*}{ \sqrt{\vertiii{\nabla v}^2 +\vertiii{y^*}_*^2 }  }
\]
defines the practical estimate of the relative error in combined energy norm.
\subsection{Example 1 (2D)}
In the first example, the domain $\Omega$ is a square with a side $20$ with $\Omega_1$ being a regular 15-sided polygon with a radius of its circumscribed circle equal to $2$. The coefficients $\epsilon$ and $k$ are\\
\begin{minipage}{0.5\textwidth}
\begin{align*}%\label{definition_of_eps}
&\epsilon(x)=\left\{
\begin{aligned}
&\epsilon_1=1,\quad x\in \Omega_1,\\
&\epsilon_2=100,\quad x\in\Omega_2.
\end{aligned}
\right.
\end{align*}
\end{minipage}
\begin{minipage}{0.5\textwidth}
\begin{align*}%\label{definition_of_eps}
&k(x)=\left\{
\begin{aligned}
&k_1=0.15,\quad x\in \Omega_1,\\
&k_2=0.4,\quad x\in\Omega_2.
\end{aligned}
\right.
\end{align*}
\end{minipage}
and 
\begin{align*}
g=L\left(  \exp\left(  -b_1 \left( \frac{(x_1-c_1))^2}{\sigma_1^2}-1\right) \right)-\exp\left(  -b_2 \left( \frac{(x_2-c_2)^2}{\sigma_2^2}-1\right) \right) \right),
\end{align*}
$l=0$, where $b_1=2=b_2=2$, $c_1=-1$, $c_2=6$, $\sigma_1=\sigma_2=1.5$, $L=0.8$.
The reference solution $z_{h_{ref}}$ is computed on an adapted mesh with $50\,086\,142$ triangles.
Note that $k^2=0.0225$ in $\Omega_1$ and $k^2=0.16$ in $\Omega_2$. The mesh adaptation is done
with the built in function ''adaptmesh'' of freefem++.
The localized error indicator $\|\sqrt{2}\eta\|_{L^2(O_i)}$, computed on each vertex patch $O_i$ of
the mesh, is compared to its average value over all patches and the local mesh size is divided by
two if this average is smaller then the local value.

Table~\ref{Example1_Table1} illustrates the main error
identity~\eqref{important_relation_satisfied_by_the_majorant_M} and the convergence of its constituent
parts. Further, it is seen that the dual error $2M_\oplus(u,y^*)$ dominates the primal error in this example.
This is due to the fact that the term $2D_F(u,-\Lambda^*y^*)$, measuring the error in
$\div y^*$ (cf. \eqref{inequality_for_DFuMinusLambdaStaryStar}
and~\eqref{Lower_Quadratic_Bound_for_DFuMinusLambdaStaryStar}), is much larger than
$\vertiii{\nabla( v-u)}^2+D_F(v,-\Lambda^*p^*)$, where $D_F(v,-\Lambda^*p^*)$ behaves like
$\|v-u\|_{L^2(\Omega_2)}^2$ (cf. \eqref{inequality_for_DFvMinusLambdaStarpStar}
and~\eqref{Upper_Quadratic_Bound_for_DFvMinusLambdaStarpStar}). 
 As we mentioned earlier, for $y^*$ we use a partially equilibrated reconstruction of the numerical
 flux $\epsilon\nabla v$ which is the reason why the integral term in~\eqref{Prager_Synge_Equality}
 is negligible compared to the combined energy norm of the error. This fact is confirmed by the values
 of the efficiency index of the lower bound \eqref{two_sided_bound_for_combined_energy_norm_error}.

{\small
\begin{center}
\captionof{table}{Example 1 (2D)} \label{Example1_Table1}
\vspace{1ex} 
%\begin{tabular}{ |p{1.9cm}|p{1.8cm}|p{2.cm}|p{2.1cm}|p{2cm}|p{2cm}|p{2cm}| }
\begin{tabular}{ |c|c|c|c|c|c|c| }
\hline
\multicolumn{7}{|c|}{Example 1 (2D): $k_1=0.15,\,k_2=0.4,\,\epsilon_1=1,\,\epsilon_2=100$} \\
\hline
\#elts &$\frac{\|v-u\|_0}{\|u\|_0}[\%]$ &$\frac{\vertiii{\nabla(v-u)}}{\vertiii{\nabla u}}[\%]$ & $\frac{\vertiii{y^*-p^*}_*}{\vertiii{p^*}_*}[\%]$ & $2M_\oplus^2(v,y^*)$& $2M_\oplus^2(v,p^*)$ &$2M_\oplus^2(u,y^*)$\\
\hline
196 		      &15.0077 	 &51.5582 		&86.1021			& 1778.14		&66.5980 	        &1711.54\\
347 		      &5.69339         &30.8534 	        &41.7241			& 703.594		&20.7780 		&682.816\\
630 		      &4.20384         &21.7715 		&31.4858			&217.719		&10.2201		&207.498 \\
1315     	      &2.39552	 &15.8532 		&23.1244			& 76.8018		&5.37574		&71.4261\\
2865            &1.87075         & 11.7353 		&17.1655 		        &33.9310			&2.94414		&30.9869\\
5938            &0.64611 	 &7.93001 		&11.4692			&16.0812		&1.33874 		&14.7425\\
12006          &0.36985	 &5.64786  		& 8.23544			&7.75232		&0.67872		&7.07360\\
24571          &0.16023 	 &3.94241 		& 5.76054			&3.85268		&0.33039		&3.52229\\
48483          &0.08909	 &2.80265 		& 4.09366			&1.90043		&0.16682		&1.73361\\
97423          &0.03961	 &1.97875 		& 2.88455			&0.96275		&0.08304 		&0.87970\\
192905       &0.02230 	 &1.39832 		& 2.03200				&0.47524		&0.04136 		&0.43388\\
386185       &0.01015 	 &0.99471 		&1.44616			&0.24134		&0.02082   	&0.22052\\
\hline
\end{tabular}
\end{center}
}

{\small
\begin{center}
\captionof{table}{Example 1 (2D)} \label{Example1_Table2} 
\vspace{1ex}
%\begin{tabular}{ | p{1.9cm}|p{2.cm}|p{2.cm}|p{2.6cm}|p{2.6cm}| }
\begin{tabular}{ |c|c|c|c|c| }
\hline
\multicolumn{5}{|c|}{Example 1 (2D): $k_1=0.15,\,k_2=0.4,\,\epsilon_1=1,\,\epsilon_2=100$} \\
\hline
\#elts & $\vertiii{\nabla(v-u)}^2$ &$\vertiii{y^*-p^*}_*^2$ & $2D_F(v,-\Lambda^*p^*)$ &$2D_F(u,-\Lambda^*y^*)$\\
\hline
196 		   &56.5057 	 &157.588 				&10.0923			&1553.95		\\
347 		   &20.2350     &37.0058 	     			&0.54296			&645.811		\\
630 		   &10.0756     &21.0729				&0.14450			&186.425		\\
1315     	   &5.34235	 &11.3668				&0.03338			&60.0593		\\
2865            &2.92742     &6.26338 	 		&0.01671			&24.7235		\\
5938            &1.33673 	 &2.79619 			&0.00200			&11.9462		\\
12006          &0.67805	 &1.44169 			&0.00067			&5.63191		\\
24571          &0.33038 	 &0.70538				&0.00001  		&2.81691		\\
48483          &0.16696	 &0.35622 			&0.00000			&1.37739		\\
97423          &0.08323	 &0.17687 			&0.00000			&0.70283		\\
192905       &0.04156 	 &0.08777 			&0.00000			&0.34611		\\
386185       &0.02103 	 &0.04445 			&0.00000			&0.17606		\\
\hline
\end{tabular}
\end{center}
}

{\small
\begin{center}
\captionof{table}{Example 1 (2D)} \label{Example1_Table3} 
\vspace{1ex}
%\begin{tabular}{ | p{1.9cm}|p{2.2cm}|p{1.7cm}|p{1.6cm}|p{1.6cm}|p{1.6cm}|p{2.4cm}| }
\begin{tabular}{ | c|c|c|c|c|c|c| }
\hline
\multicolumn{7}{|c|}{Example 1 (2D): $k_1=0.15,\,k_2=0.4,\,\epsilon_1=1,\,\epsilon_2=100$} \\
\hline
\#elts & $\frac{D_F(v,-\Lambda^*y^*)}{M_\oplus^2(v,y^*)}[\%]$ & $I_{\text{Eff}}^{\text{CEN,Low}}$ &
$I_{\text{Eff}}^{\text{CEN,Up}}$ & $I_{\text{Eff}}^{\text{E,Up}}$ & $P^{\text{CEN}}_{\text{rel}}~[\%]$ &
\parbox{2.4 cm}{True rel. error in $\text{CEN}~[\%]$}
\\
\hline
196 		      &89.0701  	 &0.67371 		&2.88191			&5.60966		&74.6973	&70.9641	\\
347 		      &92.4942         &0.67919 	        &3.50597			&5.89671		&36.2638	&36.6935	\\
630 		      &85.9525         &0.70066		&2.64380				&4.64848		&27.1574	&27.0680		\\
1315     	      &78.2616	 	&0.70681 		&2.14392			&3.79158		&19.9383	&19.8250		\\
2865            &72.8992         &0.70729 		&1.92142		        &3.40452		&14.7523	&14.7032	\\
5938            &74.3009		 &0.70708 		&1.97256			& 3.46846		&9.87419	&9.85973	\\
12006          &72.6473		 &0.70722 		&1.91238			&3.38130			&7.06762	&7.06119	\\
24571          &73.1176 	 &0.70708		&1.92864			&3.41485		&4.93753	&4.93591	\\
48483          &72.4826		 &0.70694 		&1.90588			&3.37371		&3.50789	&3.50805	\\
97423          &73.0084 	 &0.70678		&1.92392			&3.40108		&2.47256	&2.47347	\\
192905       &72.8486	 	 &0.70629 		&1.91692			&3.38145		&1.74226	&1.74418	\\
386185       &72.9912  	 &0.70546 		&1.91972			&3.38748		&1.23829	&1.24114	\\
\hline
\end{tabular}
\end{center}
}
\vspace{1ex}

 In Table~\ref{Example1_Table3} we can see that $I_{\text{Eff}}^{\text{CEN,Low}}$ is approximately
 equal to $0.7071\approx \frac{\sqrt{2}}{2}$. The value of the efficiency index with respect to the combined
 energy norm and the value of the ratio $D_F(v,-\Lambda^*y^*)/M_\oplus^2(v,y^*)$ are also coupled in
 the sense that if we have only one of these two quantities, we can estimate the other one by using the
 main error equality~\eqref{Explicit_form_of_Functional_error_equality}. This estimation is accurate
 because the integral term
 in~\eqref{relation_DFs_and_integral_term_in_Prager_Synge_extended_equality_2} is very close
 to zero and therefore $D_F(v,-\Lambda^*y^*) \approx D_F(v-\Lambda^*p^*)+D_F(u-\Lambda^*y^*)$.
 One more consequence of using a partially equilibrated flux is that we obtain a very accurate practical
 estimate of the absolute and relative error in combined energy norm as illustrated in the last two columns
 of Table~\ref{Example1_Table3}.
 
Figure~\ref{run35_2D_Th_Adapted_With_Functional_Error_Indicator0009} depicts a mesh that
is a part of a sequence of meshes obtained by mesh adaptation using the localized functional
error indicator~$\|\sqrt{2}\eta\|_{L^2(O_i)}$.
Figure~\ref{run35_2D_Th_Adapted_With_EpsGradvMinusyStar_Error_Indicator0009} depicts a
mesh with approximately the same number of elements but obtained by mesh adaptation using
the error indicator $\vertiii{\epsilon\nabla v-y^*}_{*(O_i)}$.
The mesh in Figure~\ref{run35_2D_Th_Adapted_With_Functional_Error_Indicator0009} is refined
mainly where the error in $\div y^*$ is the dominant part of the error
$M_\oplus^2(v,-\Lambda^*p^*)+M_\oplus^2(u,-\Lambda^*y^*)$. On the other hand, the mesh
in Figure~\ref{run35_2D_Th_Adapted_With_EpsGradvMinusyStar_Error_Indicator0009} is refined
most around the extrema of the solution.
Figure~\ref{Th_Adapted_With_EpsGradvMinusyStar_Indicator_AdptLevel_7} depicts the minimal
set of elements $K$ of a mesh $T_h$ that contains at least $30\%$ of the total indicated error
$\sum_{K\in T_h}{\vertiii{\epsilon\nabla v-y^*}_{*(K)} }$ (greedy algorithm with a bulk factor of $0.3$),
where $T_h$ is part of the same sequence as the mesh illustrated in
Figure~\ref{run35_2D_Th_Adapted_With_EpsGradvMinusyStar_Error_Indicator0009}. 

Figure \ref{run35_greedy1_level_02_marked_true_nonlinear_error_bulkfactor_0p50000} depicts
the elements marked by the greedy algorithm using a bulk factor of $0.5$ and employing the true
error $\sqrt{2M_\oplus^2(v,p^*)+2M_\oplus^2(u,y^*)}$ as indicator.
Figure~\ref{run35_greedy1_level_02_Differently_marked_by_FuncInd_compared_to_true_nonlinear_error_bulkfactor_0p50000}
depicts elements which are marked additionally or fail to be marked by the same greedy algorithm
when employing the functional error indicator $\|\sqrt{2}\eta\|_{L^2(O_i)}$ for the same bulk factor.
The ratio of the number of these differently marked elements, that is, elements which are marked by one
of the two methods but not by the other one, and the total number of elements is $0.022$ and the ratio
of the number of differently marked elements to the number of marked elements using the true error
is $0.048$ (see Table~\ref{Example1_Table_Marked_Elements}). 
%Figure~\ref{Th_Adapted_With_Functional_Error_Indicator_Depicted_Marked_Elements_With_Functional_Error_Indicator} and \ref{Th_Adapted_With_Functional_Error_Indicator_Depicted_Marked_Elements_With_Full_Nonlinear_Norm_Error_Indicator} depict the marked elements on the same mesh using the same greedy algorithm with a bulk factor of $0.3$ once employing the functional error indicator $\|\sqrt{2}\eta\|_{L^2(O_i)}$ (left picture) and once employing the true error $\sqrt{2M_\oplus^2(v,p^*)+2M_\oplus^2(u,y^*)}$ (right picture). The ratio of the number of elements that are marked on one picture but not on the other one and the total number of elements is $0.1$. 
Comparing the indicated error and the true error elementwise, one finds that the error indicator generated by the majorant $M_\oplus^2(v,y^*)$
reproduces the local distribution of the error with a very high accuracy.
This is also confirmed by Figure~\ref{Errors_FuncInd_vs_FullNonlinearNorm_Refinement} where it can be seen that
all error measures are almost identical in both cases of adaptive mesh refinement. Mesh adaptation based on the
functional error indicator $\|\sqrt{2}\eta\|_{L^2(O_i)}$ instead of the
error indicator $\vertiii{\epsilon\nabla v-y^*}_{*(O_i)}$ (see Figure~\ref{Majorant_Errors_vs_DOFs}) yields
approximately twice smaller efficiency indexes in energy and combined energy norms and approximately
twice smaller values for the full error $M_\oplus^2(v,p^*)+M_\oplus^2(u,y^*)$ on meshes with a comparable
number of elements. The reason for the higher efficiency indexes is that no adaptive control is applied on the
nonlinear part of the error measure in \eqref{Explicit_form_of_Functional_error_equality}, and consequently,
the ratio $D_F(v,-\Lambda^*y^*)/M_\oplus^2(v,y^*)$ is increasing, reaching values close to $100\%$ on fine
meshes. However, the error in $\vertiii{\nabla(v-u)}$ and $\vertiii{y^*-p^*}_*$ might be a little higher in the case
of the functional error indicator $\|\sqrt{2}\eta\|_{L^2(O_i)}$. For example, on the mesh from
Figure~\ref{Th_Adapted_With_EpsGradvMinusyStar_Indicator_AdptLevel_7} with $24\,122$ elements,
$M_\oplus^2(v,p^*)+M_\oplus^2(u,y^*)=3.8314$, $\vertiii{\nabla(v-u)} = 0.4674$, $\vertiii{y^*-p^*}_*=0.6540$,
whereas on a mesh with $24\,571$ elements from the sequence adapted with the indicator
$\|\sqrt{2}\eta\|_{L^2(O_i)}$, we obtained a value of $1.9263$ for $M_\oplus^2(v,y^*)$, and $0.574791$
and $0.8399$ for $\vertiii{\nabla(v-u)}$ and $\vertiii{y^*-p^*}_*$, respectively. This shows that by reducing
the error in $\div y^*$ the functional error indicator $\|\sqrt{2}\eta\|_{L^2(O_i)}$ provides a better approximation
for the primal and dual problem together.
%\begin{figure}[!htb]
%    \centering
%    \begin{minipage}{0.5\textwidth}
%        \centering
%        \captionsetup{width=.9\linewidth}
%        \includegraphics[width=1\linewidth]{Majorant_Errors_vs_DOFs.pdf}
%        \caption{{\scriptsize Comparison of errors in the case of AMR with the functional error indicator $\|\sqrt{2}\eta\|_{L^2(O_i)}$ versus AMR with the indicator $\vertiii{\epsilon\nabla v-y^*}_{*(O_i)}$. }}
%        \label{Majorant_Errors_vs_DOFs}
%    \end{minipage}%
%    \begin{minipage}{0.5\textwidth}
%        \centering
%        \captionsetup{width=.9\linewidth}
%        \includegraphics[width=1\linewidth]{Errors_FuncInd_vs_FullNonlinearNorm_Refinement.pdf}
%        \caption{{\scriptsize Comparison of errors in the case of AMR with the functional error indicator $\|\sqrt{2}\eta\|_{L^2(O_i)}$ versus AMR with the indicator generated by the true error $\sqrt{2M_\oplus^2(v,p^*)+2M_\oplus^2(u,y^*)}$.}}
%        \label{Errors_FuncInd_vs_FullNonlinearNorm_Refinement}
%    \end{minipage}
%\end{figure}
        \begin{figure}[!htb]
    \centering
    \begin{minipage}{1\textwidth}
        \centering
        \captionsetup{width=0.8\linewidth}
      \includegraphics[width=0.8\linewidth]{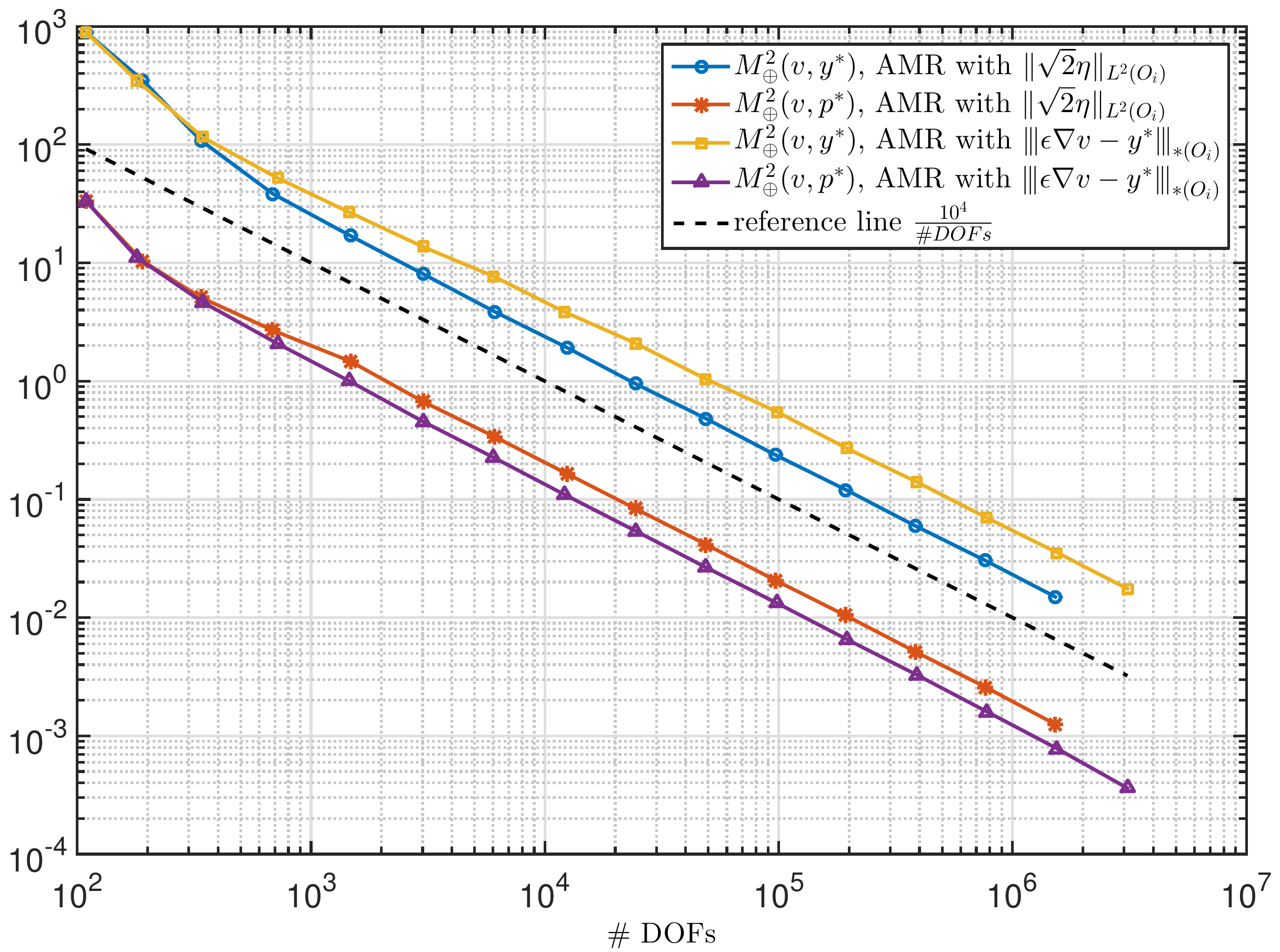}
        \caption{{\scriptsize Comparison of errors for AMR based on the functional error indicator
        $\|\sqrt{2}\eta\|_{L^2(O_i)}$ versus AMR based on the indicator $\vertiii{\epsilon\nabla v-y^*}_{*(O_i)}$. }}
        \label{Majorant_Errors_vs_DOFs}
    \end{minipage}%
    \end{figure}
    \begin{figure}
    \begin{minipage}{1\textwidth}
        \centering
        \captionsetup{width=0.9\linewidth}
        \includegraphics[width=0.9\linewidth]{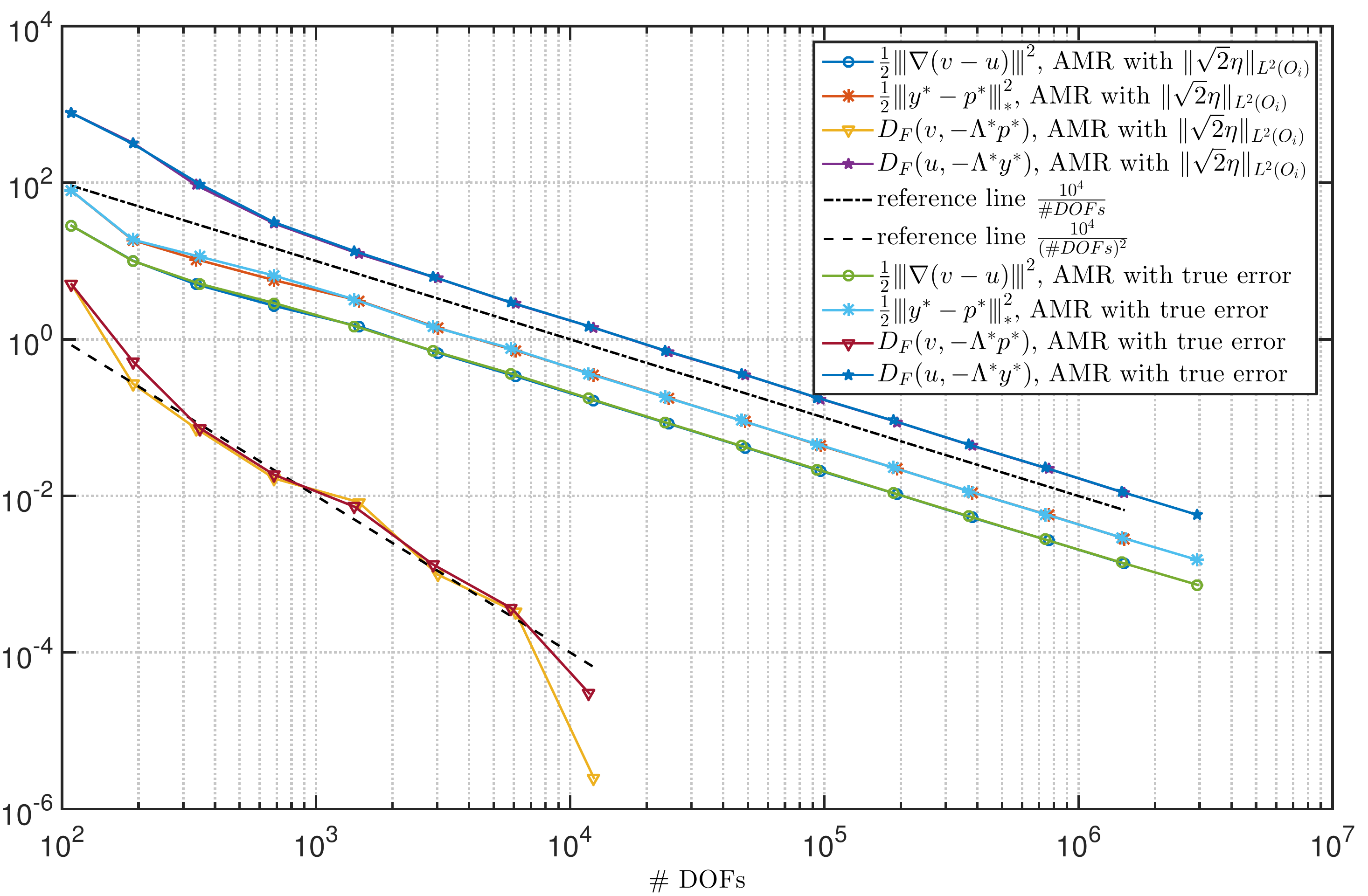}
        \caption{{\scriptsize Comparison of errors for AMR based on the functional error indicator
        $\|\sqrt{2}\eta\|_{L^2(O_i)}$ versus AMR based on the indicator generated by the true error
        $\sqrt{2M_\oplus^2(v,p^*)+2M_\oplus^2(u,y^*)}$.}}
        \label{Errors_FuncInd_vs_FullNonlinearNorm_Refinement}
    \end{minipage}
\end{figure}
\begin{figure}[!htb]
    \centering
    \begin{minipage}{0.5\textwidth}
        \centering
        \captionsetup{width=.9\linewidth}
        \includegraphics[width=1\linewidth]{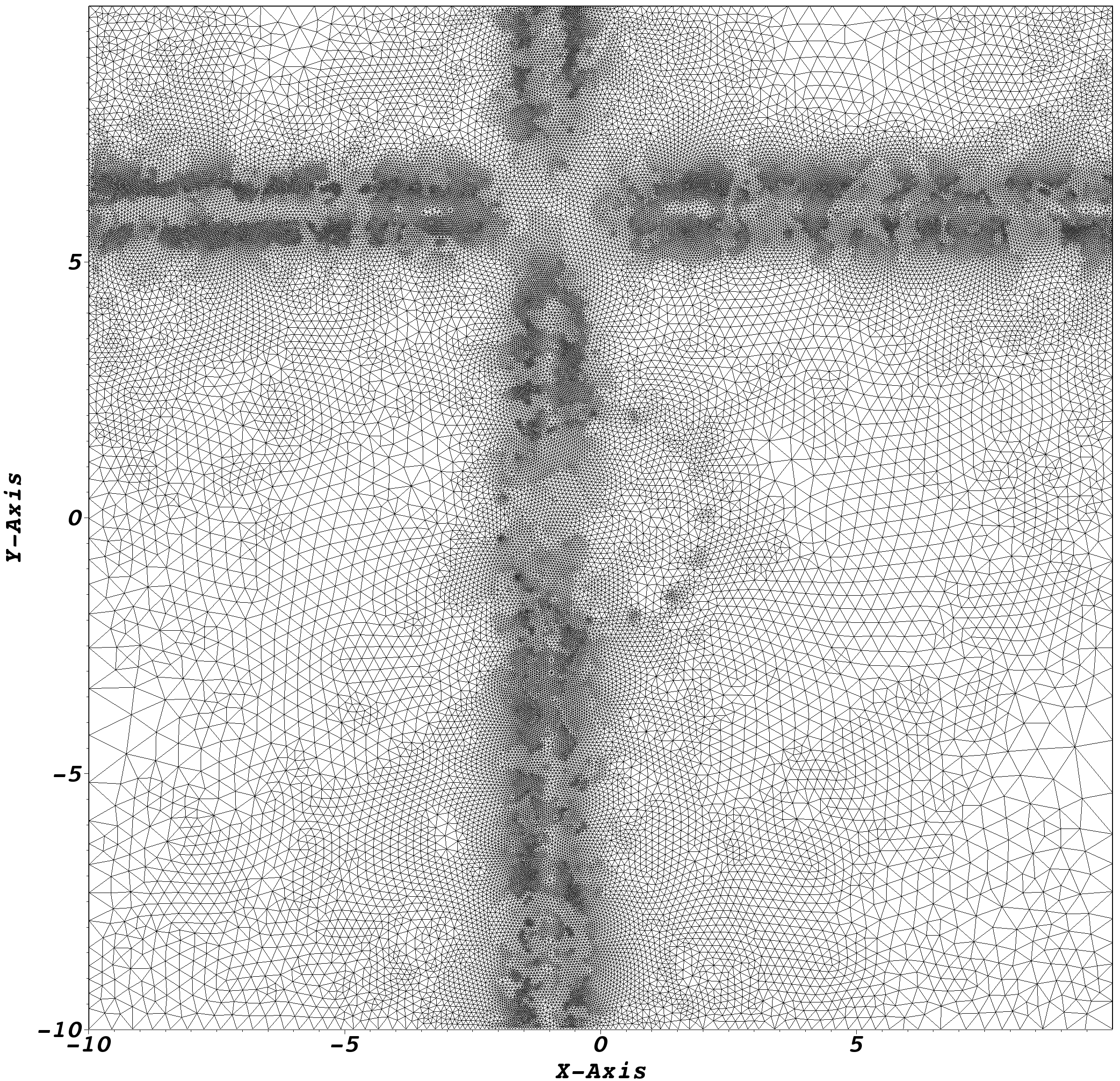}
        \caption{{\scriptsize Mesh on the $9$th level of AMR ($97\,423$ elements) based on the error
        indicator $\|\sqrt{2}\eta\|_{L^2(O_i)}$ with flux equilibration for~$y^*$.}}
        \label{run35_2D_Th_Adapted_With_Functional_Error_Indicator0009}
    \end{minipage}%
    \begin{minipage}{0.5\textwidth}
        \centering
        \captionsetup{width=.9\linewidth}
        \includegraphics[width=1\linewidth]{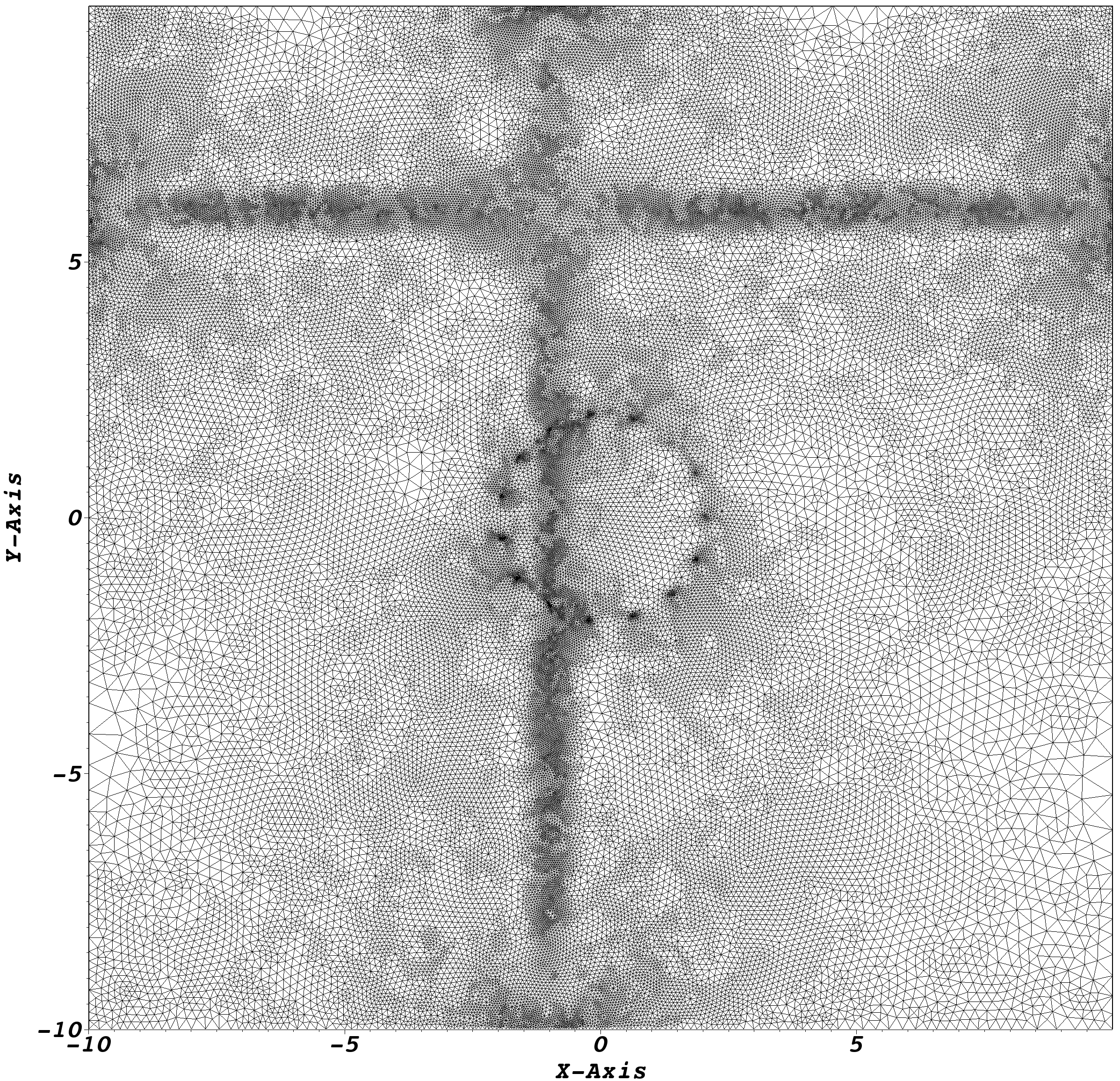}
        \caption{{\scriptsize Mesh on level $9$th level of AMR ($97\,353$ elements) based on the error
        indicator $\vertiii{\epsilon\nabla v-y^*}_{*(O_i)}$ with flux equilibration for~$y^*$.}}
        \label{run35_2D_Th_Adapted_With_EpsGradvMinusyStar_Error_Indicator0009}
    \end{minipage}
\end{figure}
%%%%%%%%%%%%%%%%%%%%%%%%%%%%%%%%%%%%%
{\small
\begin{center}
\captionof{table}{Example 1 (2D)} \label{Example1_Table_Marked_Elements} 
\vspace{1ex}
%\begin{tabular}{ |p{1.9cm}|p{1.8cm}|p{2.cm}|p{2.1cm}|p{2cm}|p{2cm}|p{2cm}| }
\begin{tabular}{ |c|c|c|c| }
\hline
\multicolumn{4}{|c|}{Example 1 (2D): $k_1=0.15,\,k_2=0.4,\,\epsilon_1=1,\,\epsilon_2=100$} \\
\hline
\#elts &\parbox{2.8cm}{\#marked elts\\ with true error}& \parbox{2.5cm}{\#differently\\ marked elts}&
\parbox{4.2cm}{differently marked elts \\ in \% of all mesh elts }\\
\hline
196 		     	 &62 	     &6 			&3.06122			\\
347 		      &150       &10 	     &2.88184			\\
630 		      &288       &14 		&2.22222			\\
1315     	      &632       &39 		&2.96578			\\
2865         	&1439      &113 		&3.94415 		     \\
5938       		&2949	     &216 		&3.63759			\\
12006       	&5981      &534  		&4.44778			\\
24571       	&12099    &961 		&3.91111			\\
48483       	&24194    &2233 		&4.60574			\\
97423       	&47784    &4012 		&4.11812			\\
\hline
\end{tabular}
\end{center}
}
%%%%%%%%%%%%%%%%%%%%%%%%%%%%%%%%%%%%%
\begin{figure}[h!t]
    \centering
    \begin{minipage}[t]{0.52\textwidth}
        \centering
        \captionsetup{width=0.9\linewidth}
        \includegraphics[width=1\linewidth,valign=t]{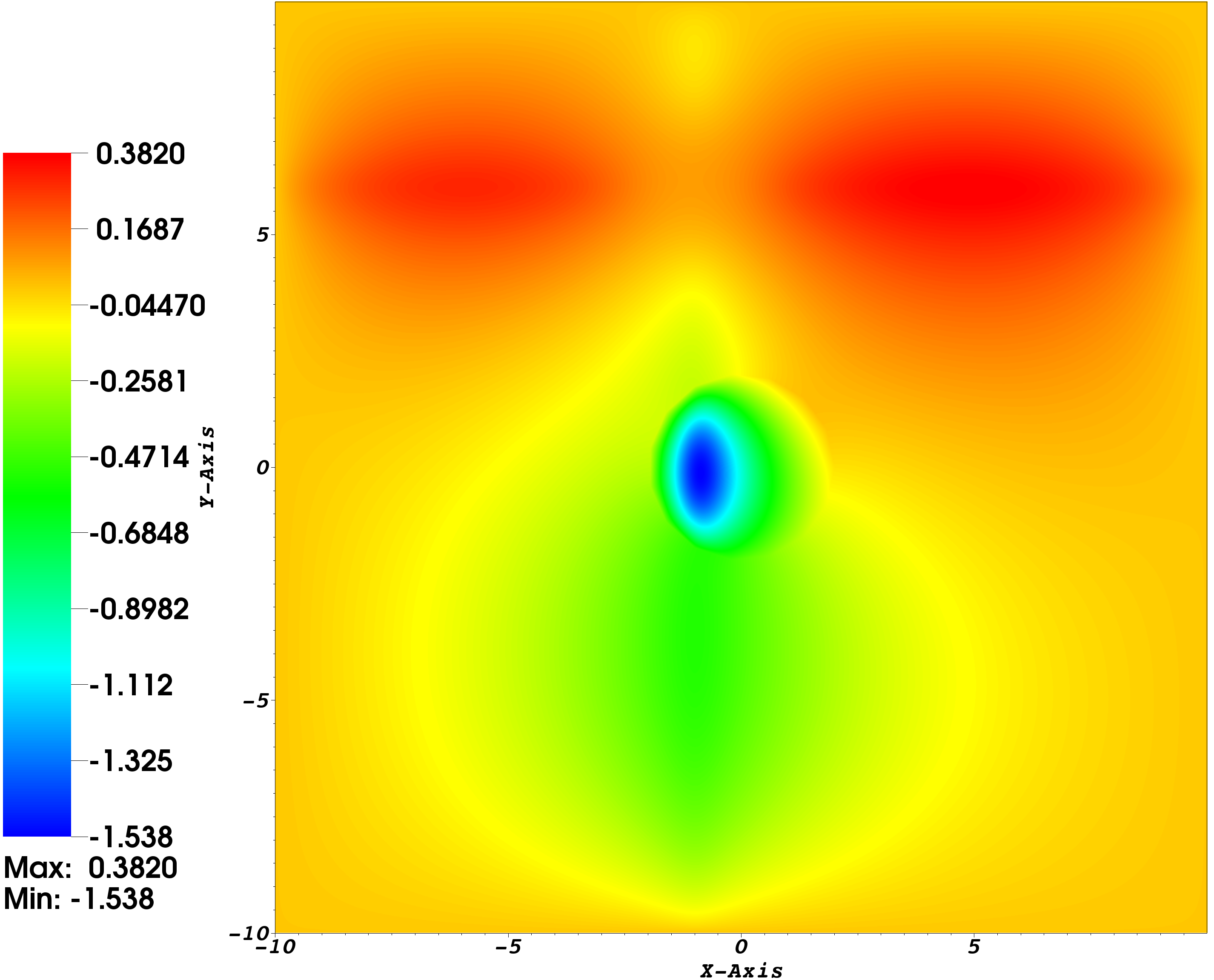}
        \caption{{\scriptsize Reference solution for Example 1 (2D).}}
        \label{Reference_Solution}
    \end{minipage}%
    \begin{minipage}[t]{0.434\textwidth}
        \centering
        \captionsetup{width=.9\linewidth}
        \includegraphics[width=1\linewidth,valign=t]{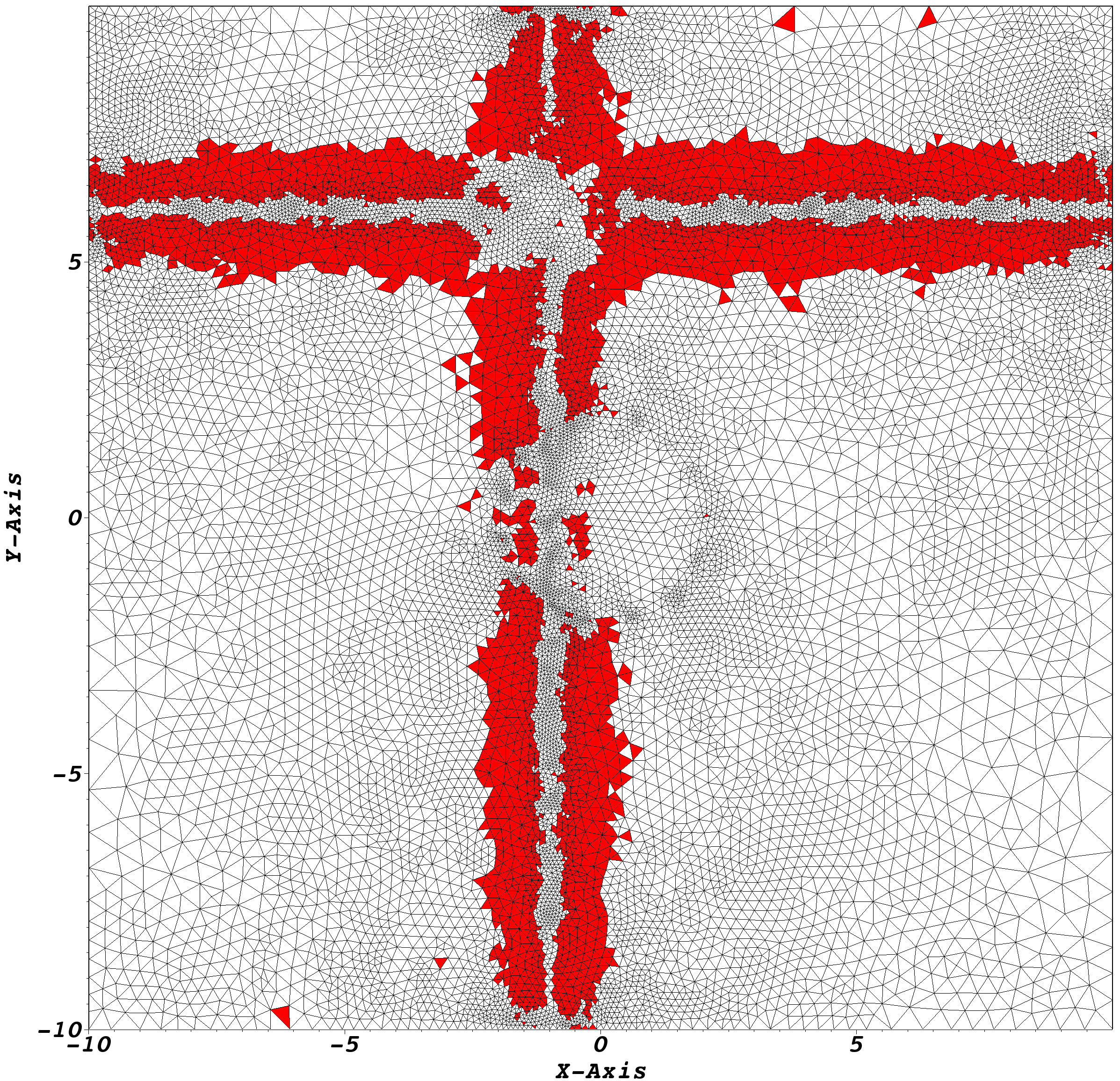}
        \caption{{\scriptsize Mesh on the $7$th level of AMR ($24\,122$ elements) based on the error indicator
        $\vertiii{\epsilon\nabla v-y^*}_{*(O_i)}$ with flux equilibration for~$y^*$.
        Marked elements using the error indicator $\|\sqrt{2}\eta\|_{L^2(K)}$ applying greedy algorithm with bulk factor~$0.3$.}}
        \label{Th_Adapted_With_EpsGradvMinusyStar_Indicator_AdptLevel_7}
    \end{minipage}
\end{figure}
\vspace{1ex}
\begin{figure}[h!t]
    \centering
    \begin{minipage}[t]{0.5\textwidth}
        \centering
        \captionsetup{width=.9\linewidth}
        \includegraphics[width=1\linewidth]{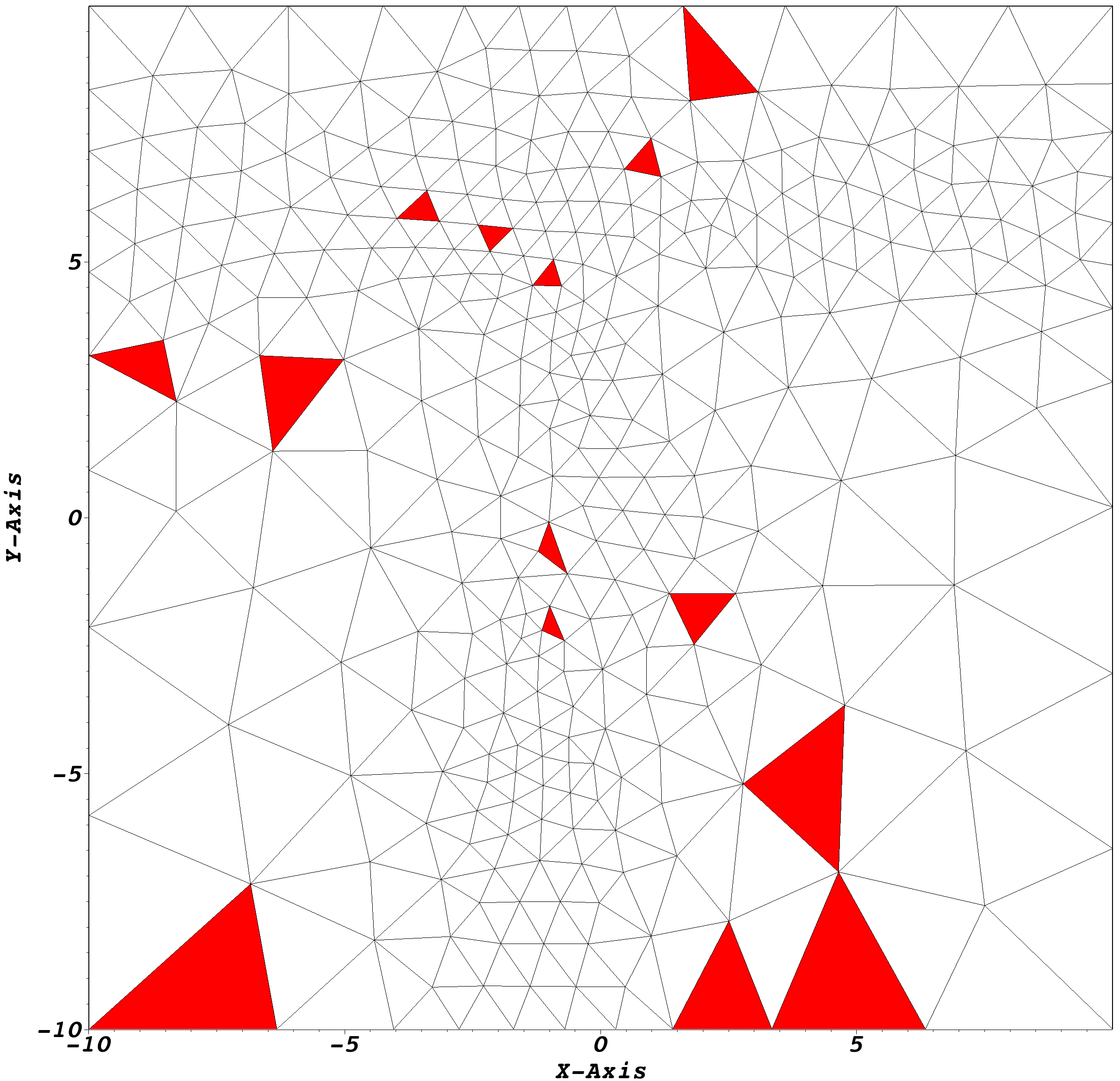}
%        \caption{{\scriptsize Mesh on level 2, consisting of $630$ elements, adapted using the error indicator $\|\sqrt{2}\eta\|_{L^2}$ with flux equilibration for $y^*$. Differently marked elements using the error indicator $\|\sqrt{2}\eta\|_L^2$ compared to the marked elements by using the true error $M_\oplus^2(v,-\Lambda^*p^*)+M_\oplus^2(u,-\Lambda^*y^*)$. Again, the greedy algorithm with a bulk factor of $0.5$ is used.}}
 \caption{{\scriptsize Mesh on the $2$nd level of AMR ($630$ elements) based on the error indicator
 $\|\sqrt{2}\eta\|_{L^2(O_i)}$ with flux equilibration for~$y^*$. Differently marked elements using the
 error indicator $\|\sqrt{2}\eta\|_{L^2(K)}$ as compared to the elements marked when using the true
 error $M_\oplus^2(v,-\Lambda^*p^*)+M_\oplus^2(u,-\Lambda^*y^*)$ applying greedy algorithm with
 bulk factor~$0.5$.}}
        \label{run35_greedy1_level_02_Differently_marked_by_FuncInd_compared_to_true_nonlinear_error_bulkfactor_0p50000}
    \end{minipage}%
    \begin{minipage}[t]{0.5\textwidth}
        \centering
        \captionsetup{width=.9\linewidth}
        \includegraphics[width=1\linewidth]{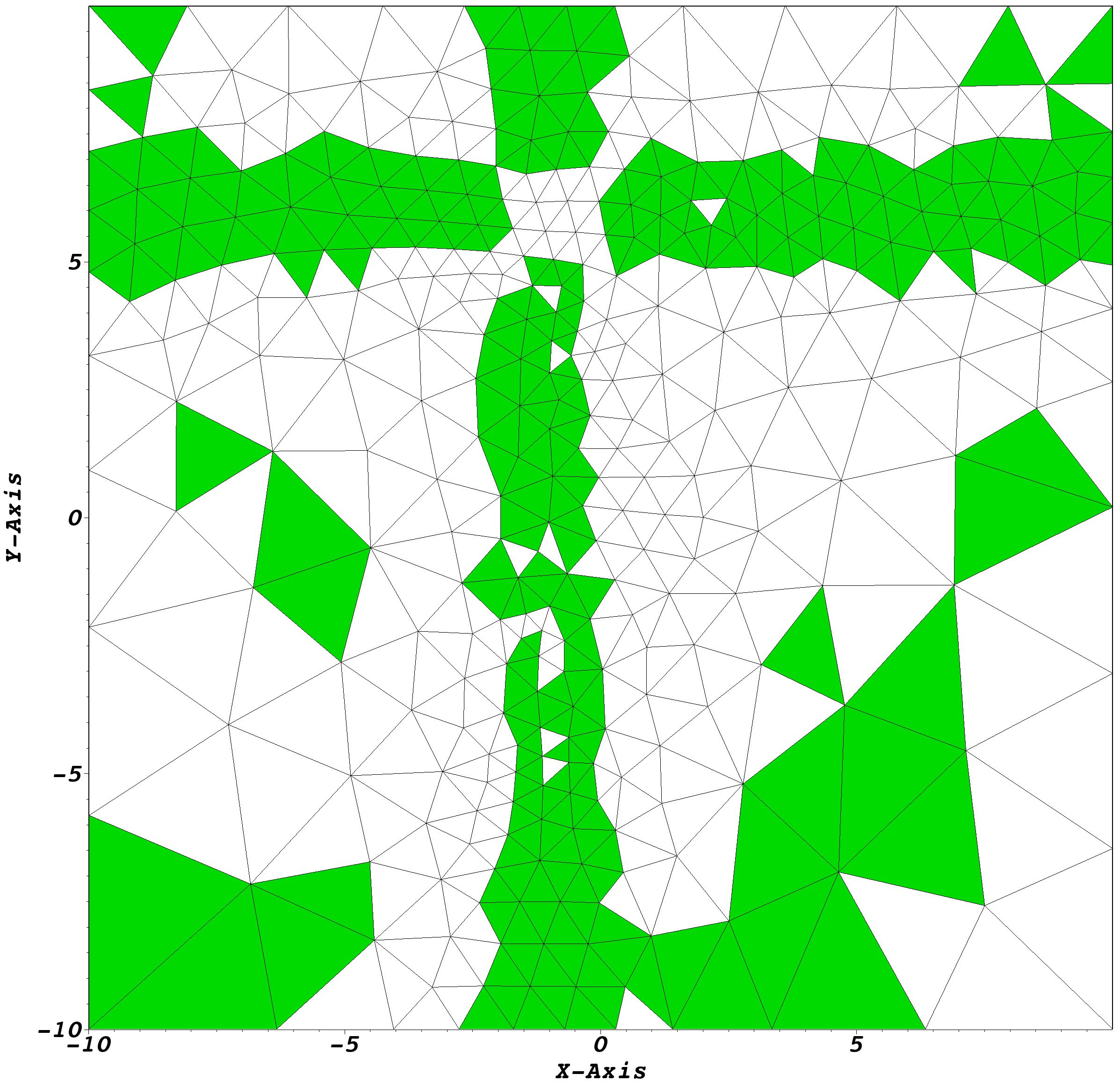}
        \caption{{\scriptsize Mesh on the $2$nd level of AMR  ($630$ elements) based on the error indicator
        $\|\sqrt{2}\eta\|_{L^2(O_i)}$ with flux equilibration for~$y^*$. Marked elements using the true error
        $\sqrt{2M_\oplus^2(v,p^*)+2M_\oplus^2(u,y^*)}$ applying greedy algorithm with bulk factor~$0.5$.}}
        \label{run35_greedy1_level_02_marked_true_nonlinear_error_bulkfactor_0p50000}
    \end{minipage}
\end{figure}

In the following we want to demonstrate that flux equilibration is indeed an important subtask
to make the proposed error bounds reliable and efficient. For this purpose, we use a simple
global gradient averaging procedure, i.e. project the numerical flux $\epsilon\nabla v\in L^2(\Omega)$
onto the subspace $\left[V_h\right]^2$, where $V_h$ is the finite element space of continuous piecewise
linear functions. We then solve adaptively Example 1 once by applying the functional error indicator
$\|\sqrt{2}\eta\|_{L^2(O_i)}$ and once by applying the error indicator $\vertiii{\epsilon\nabla v-y^*}_{*(O_i)}$.
Figure~\ref{run35_2D_Th_Adapted_With_Functional_Error_Indicator_Using_Gradient_Averaging} shows
an adapted mesh with $563\,965$ elements which is a part of a sequence of meshes obtained by applying
the functional error indicator with gradient averaging for $y^*$
while Figure~\ref{run35_2D_Th_Adapted_With_EpsGradvMinusyStar_Error_Indicator_Using_Gradient_Averaging_for_yStar}
shows a mesh with $444\,092$ elements which is part of a sequence of meshes adapted using the second indicator
with gradient averaging for $y^*$. It can be seen by comparing with the results based on flux equilibration for~$y^*$
that the mesh in $\Omega_2$ close to the interface $\Gamma$ is refined too much for both error indicators.
Apart from that, the meshes on
%Figure~\ref{run35_2D_Th_Adapted_With_Functional_Error_Indicator_Using_Gradient_Averaging}
Figures~\ref{run35_2D_Th_Adapted_With_EpsGradvMinusyStar_Error_Indicator_Using_Gradient_Averaging_for_yStar}
and~\ref{run35_2D_Th_Adapted_With_EpsGradvMinusyStar_Error_Indicator0009} look quite similar,
unlike the meshes on Figures~\ref{run35_2D_Th_Adapted_With_Functional_Error_Indicator_Using_Gradient_Averaging}
and~\ref{run35_2D_Th_Adapted_With_Functional_Error_Indicator0009}.
For meshes with similar number of elements, by applying the indicator $\vertiii{\epsilon\nabla v-y^*}_{*(O_i)}$
using flux equilibration versus gradient averaging we obtained around $30 \%$ larger values for the error
$\vertiii{\nabla(v-u)}$ and $60\%$ larger values for the error $\vertiii{y^*-p^*}_*$. The difference in the errors
when applying the functional indicator $\|\sqrt{2}\eta\|_{L^2(O_i)}$ with flux equilibration versus with gradient
averaging for $y^*$ is even more drastic--between $40\%$ and $180\%$ larger error $\vertiii{\nabla(v-u)}$
and between  $64\%$ and $66\%$ larger error $\vertiii{y^*-p^*}_*$ for meshes with between $21\,528$ and
$563\,965$ elements.
In both cases we obtained an increasing sequence of efficiency indexes with respect to energy and combined
energy norms reaching values of $133$ and $107$ with the functional error indicator on a mesh with $2\,089\,022$
elements, and $ 570$ and $269$ with the error indicator $\vertiii{\epsilon\nabla v-y^*}_{*(O_i)}$ on a mesh with
$2\,954\,218$ elements. This is due to the fact that the nonlinear term $D_F(u,-\Lambda^*y^*)$, which measures
the error in $\div y^*$ (see \eqref{inequality_for_DFuMinusLambdaStaryStar} and
\eqref{Lower_Quadratic_Bound_for_DFuMinusLambdaStaryStar}),
dominates the other terms in the nonlinear measure $M_\oplus^2(v,p^*)+M_\oplus^2(u,y^*)$ for the error,
reaching more than $99.99 \%$ of it in both cases. In both experiments with gradient averaging for $y^*$,
increasing values of $D_F(u,-\Lambda^*y^*)$ are in correspondence with increasing error
$\|\div y^*-\div p^*\|_{L^2(\Omega)}$ and increasing efficiency indexes.
\vspace{1ex}

\begin{figure}[!ht]
    \centering
    \begin{minipage}{0.5\textwidth}
        \centering
        \captionsetup{width=.9\linewidth}
        \includegraphics[width=1\linewidth]{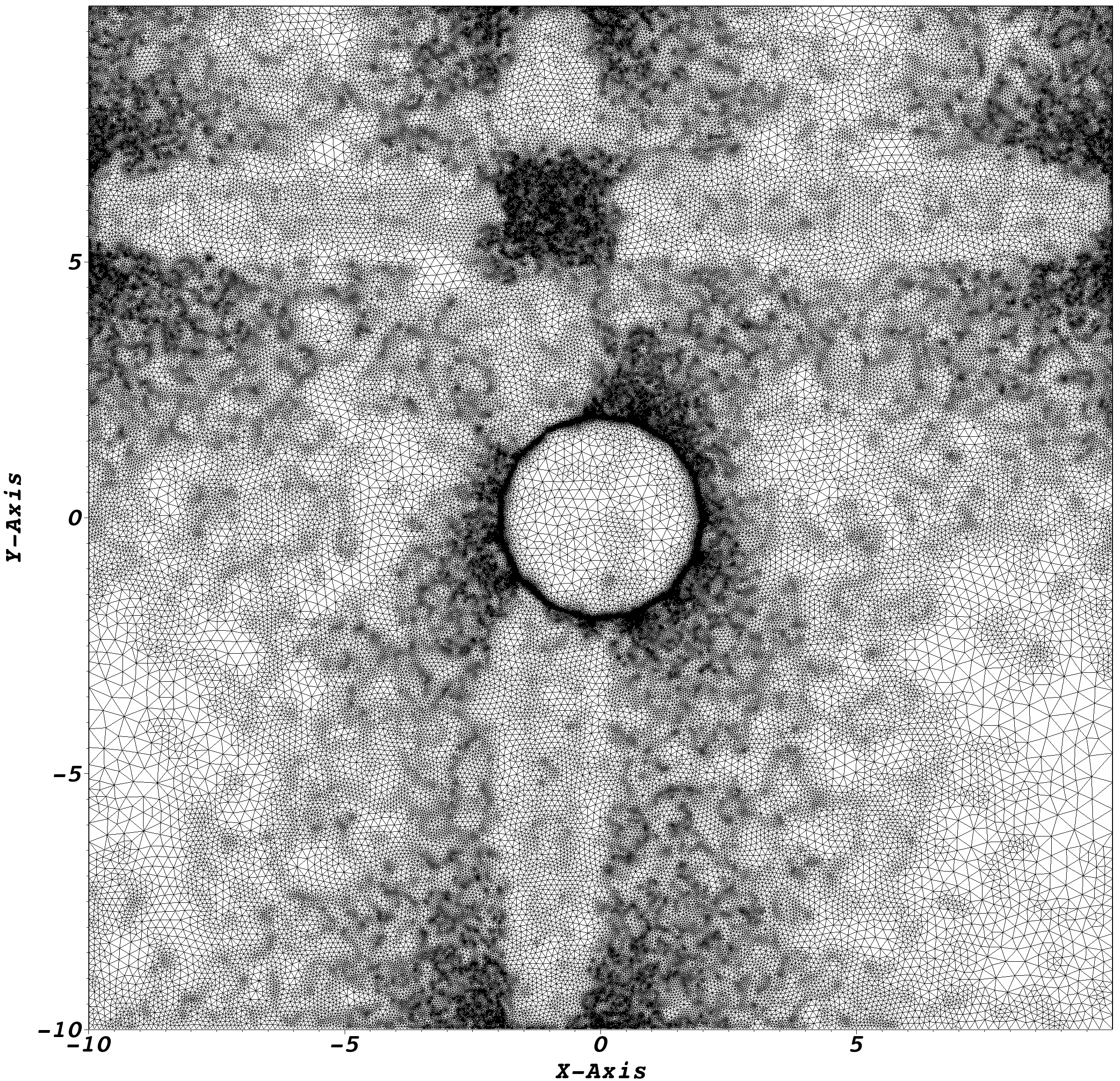}
        \caption{{\scriptsize Mesh with $563\,965$ elements, adapted using the error indicator $\|\sqrt{2}\eta\|_{L^2(O_i)}$ with gradient averaging for $y^*$. }}
        \label{run35_2D_Th_Adapted_With_Functional_Error_Indicator_Using_Gradient_Averaging}
    \end{minipage}%
    \begin{minipage}{0.5\textwidth}
        \centering
        \captionsetup{width=.9\linewidth}
        \includegraphics[width=1\linewidth]{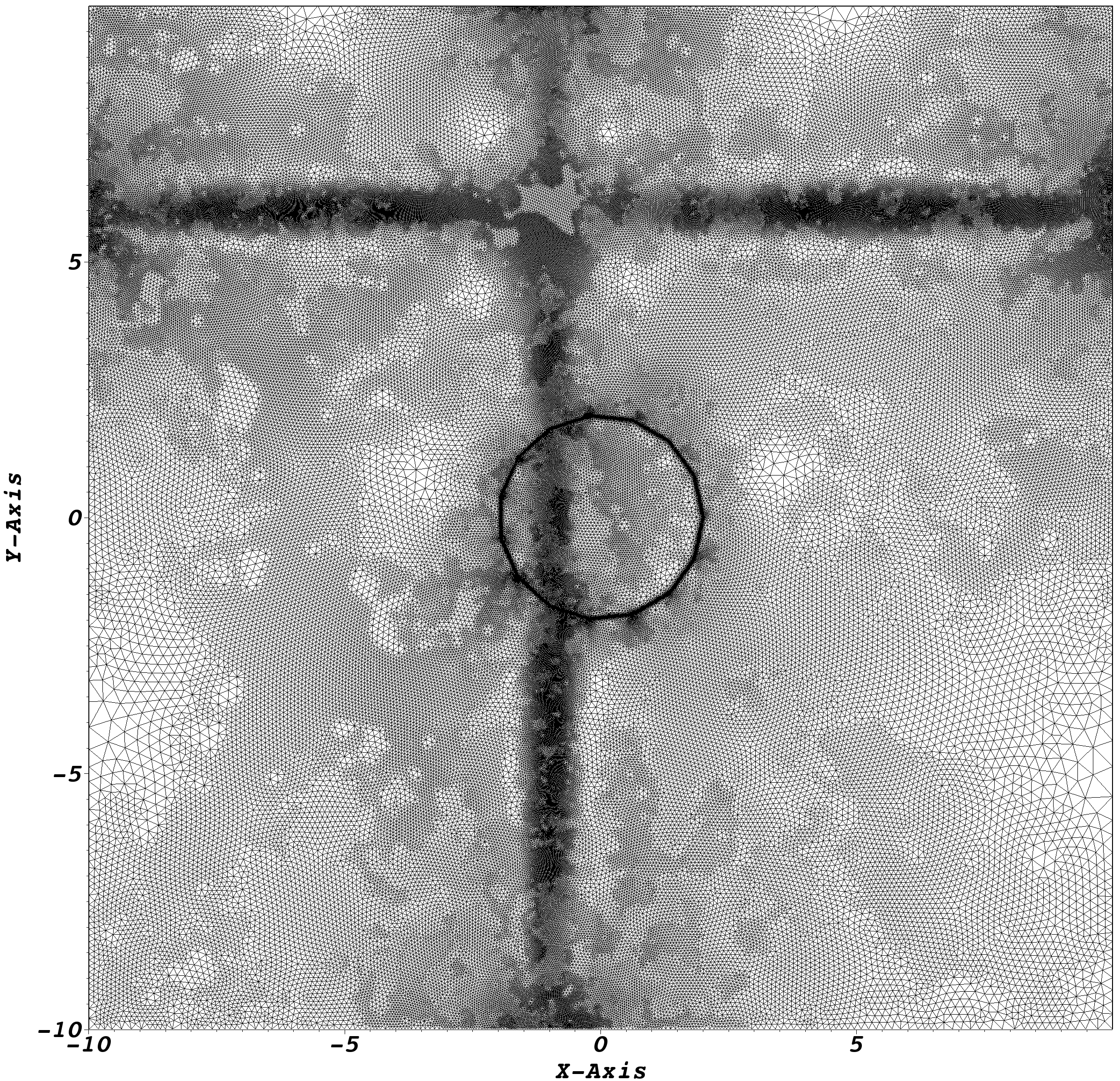}
        \caption{{\scriptsize Mesh with $444\,092$ elements, adapted using the error indicator $\vertiii{\epsilon\nabla v-y^*}_{*(O_i)}$ with gradient averaging for $y^*$.}}
        \label{run35_2D_Th_Adapted_With_EpsGradvMinusyStar_Error_Indicator_Using_Gradient_Averaging_for_yStar}
    \end{minipage}
\end{figure}
\subsection{Example 2 (2D)}
Figures~\ref{run26_2D_Th_Adapted_With_Functional_Error_Indicator0011} and \ref{run26_2D_Th_Adapted_With_EpsGradvMinusyStar_Error_Indicator0012} show the dependence of the meshes on the indicator for another example. Here, $\epsilon_1=1\, \epsilon_2=100$, $k_1=0.2,\,k_2=0.3$. The function $g=\exp\left(-b_1\left(\frac{|x-c_1|^2}{\sigma_1^2}-1\right)\right)-\exp\left(-b_2\left(\frac{|x-c_2|^2}{\sigma_2^2}-1\right)\right)$ and $l=\exp\left(-b_3\left(\frac{|x|^2}{\sigma_3^2}-1\right)\right)\sin\left(\frac{x_1x_2}{4}\right)$, where $b_1=2.2$, $b_2=2.5$, $b_3=6$, $c_1=(-1,0)$, $c_2=(5,5)$, $\sigma_1=\sigma_2=2$, $\sigma_3=10$.
The indicator $\vertiii{\epsilon\nabla v-y^*}_{*(O_i)}$ approximates well the elementwise error in combined energy norm but does not capture the rest of the error which is a result from the nonlinearity $k^2\sinh(u+w)$ and the right-hand side $l$ in \eqref{PBE_special_form_regular_nonlinear_part}. On the other hand, the term $D_F(v,-\Lambda^*y^*)$ controls the error $D_F(v,-\Lambda^*p^*)+D_F(u,-\Lambda^*y^*)$ and this is the reason why the mesh on Figure~\ref{run26_2D_Th_Adapted_With_Functional_Error_Indicator0011} resembles the wavy features of the function $f=-k^2\sinh(u+w)+l$.
 
\noindent 
The isolines of the reference solution and of the function $f$ are depicted on
Figures~\ref{Reference_Solution_50_Iso_Lines0000} and~\ref{Example_run26_2D_Function_f_200_Iso_Lines0000}.
\vspace{1ex} 
 
\begin{figure}[h!t]
    \centering
        \begin{minipage}[t]{0.516\textwidth}
        \captionsetup{width=0.9\linewidth}
        \includegraphics[width=1\linewidth,valign=t]{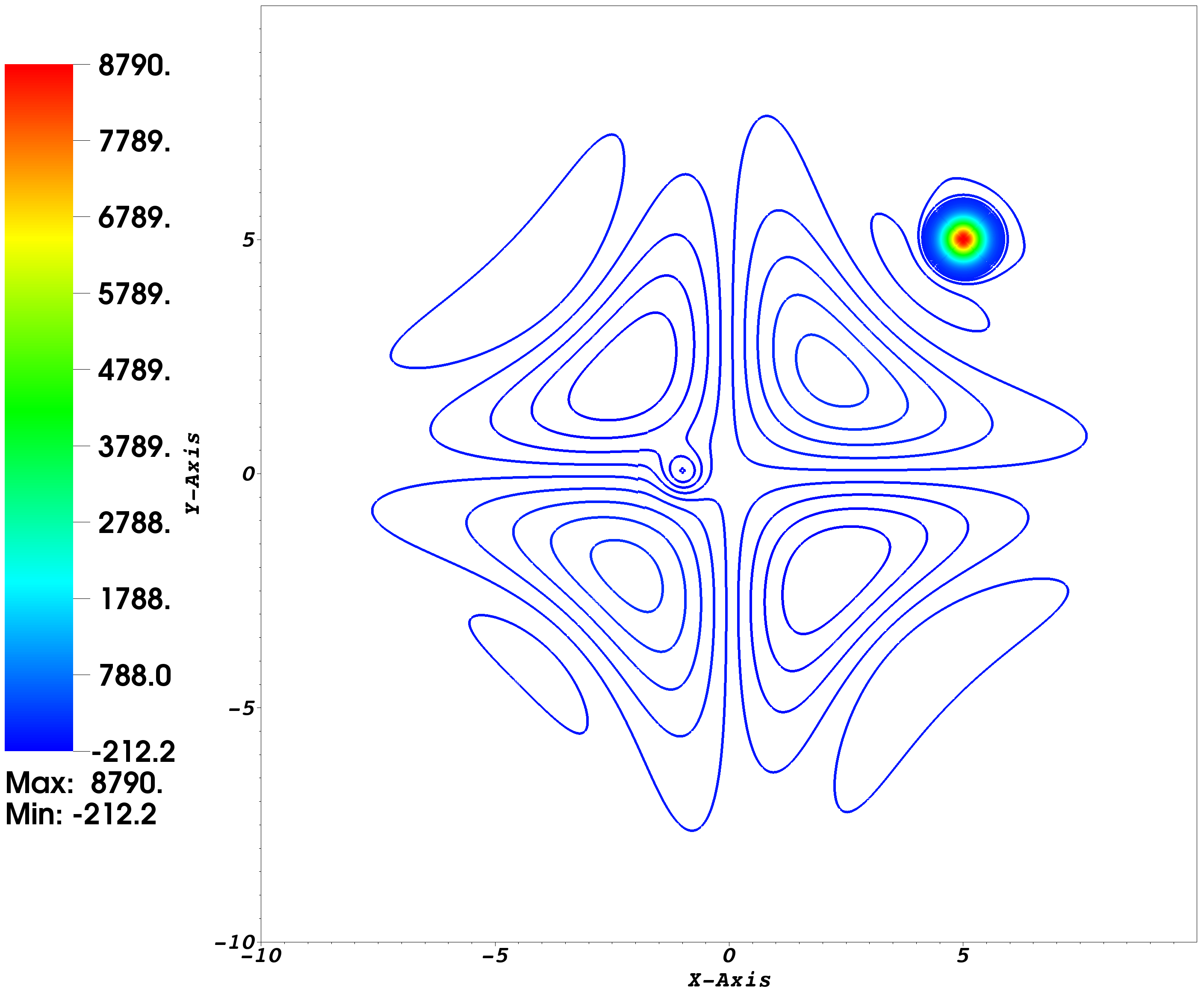}\vspace{-1ex}
        \caption{{\scriptsize Function $f=-k^2\sinh(u+w)+l$.}}
        \label{Example_run26_2D_Function_f_200_Iso_Lines0000}
    \end{minipage}
    \begin{minipage}[t]{0.434\textwidth}
        \captionsetup{width=.9\linewidth}
        \includegraphics[width=1\linewidth,valign=t]{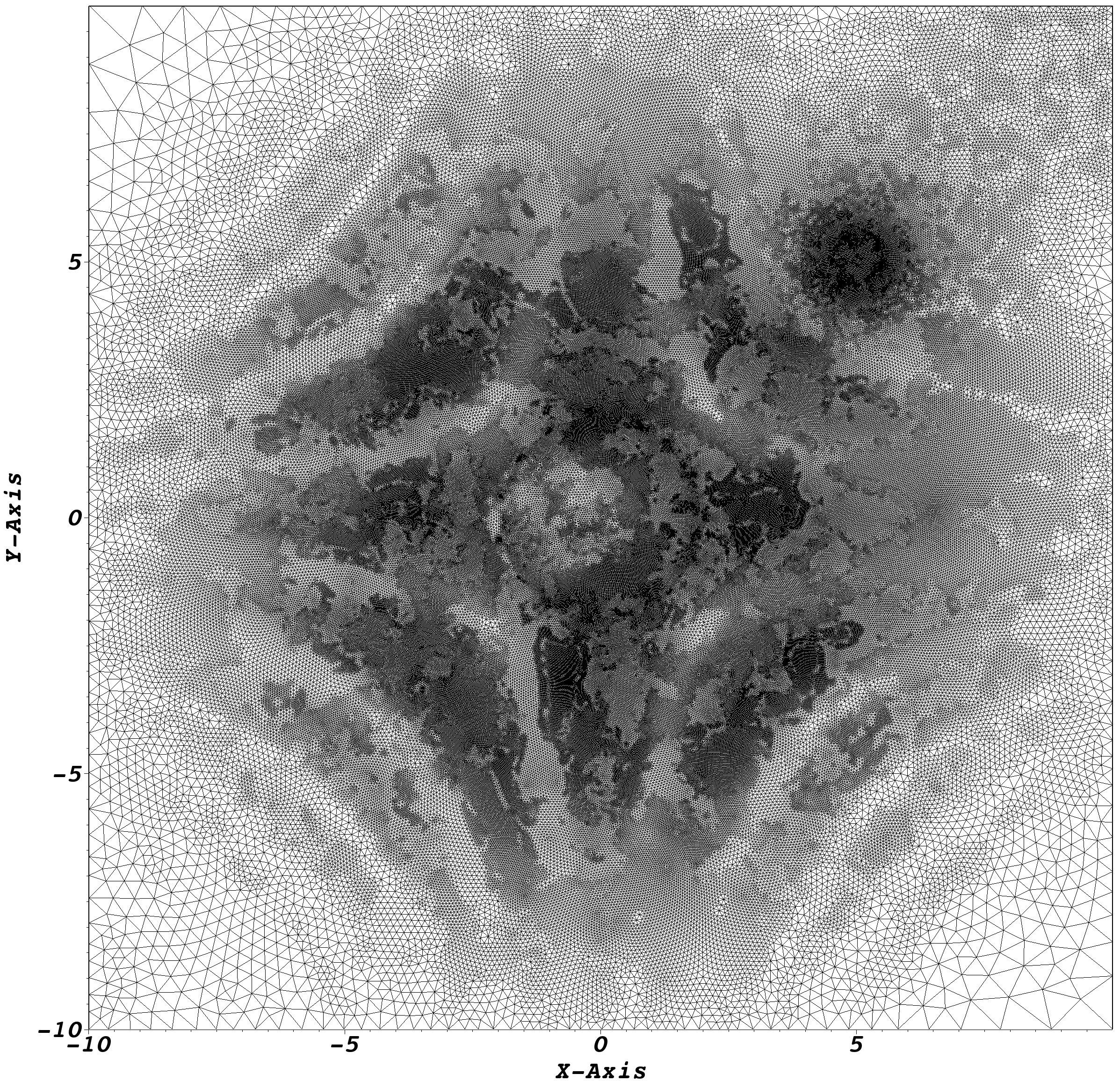}\vspace{-1ex}
        \caption{{\scriptsize Mesh with $395\,935$ elements, obtained by AMR using the error indicator $\|\sqrt{2}\eta\|_{L^2(O_i)}$ with
        flux equilibration for~$y^*$.}}
        \label{run26_2D_Th_Adapted_With_Functional_Error_Indicator0011}
    \end{minipage}\\[4ex]%
%\end{figure}
%\begin{figure}[ht]
    \centering
    \begin{minipage}[t]{0.516\textwidth}
        \captionsetup{width=0.9\linewidth}
        \includegraphics[width=1\linewidth,valign=t]{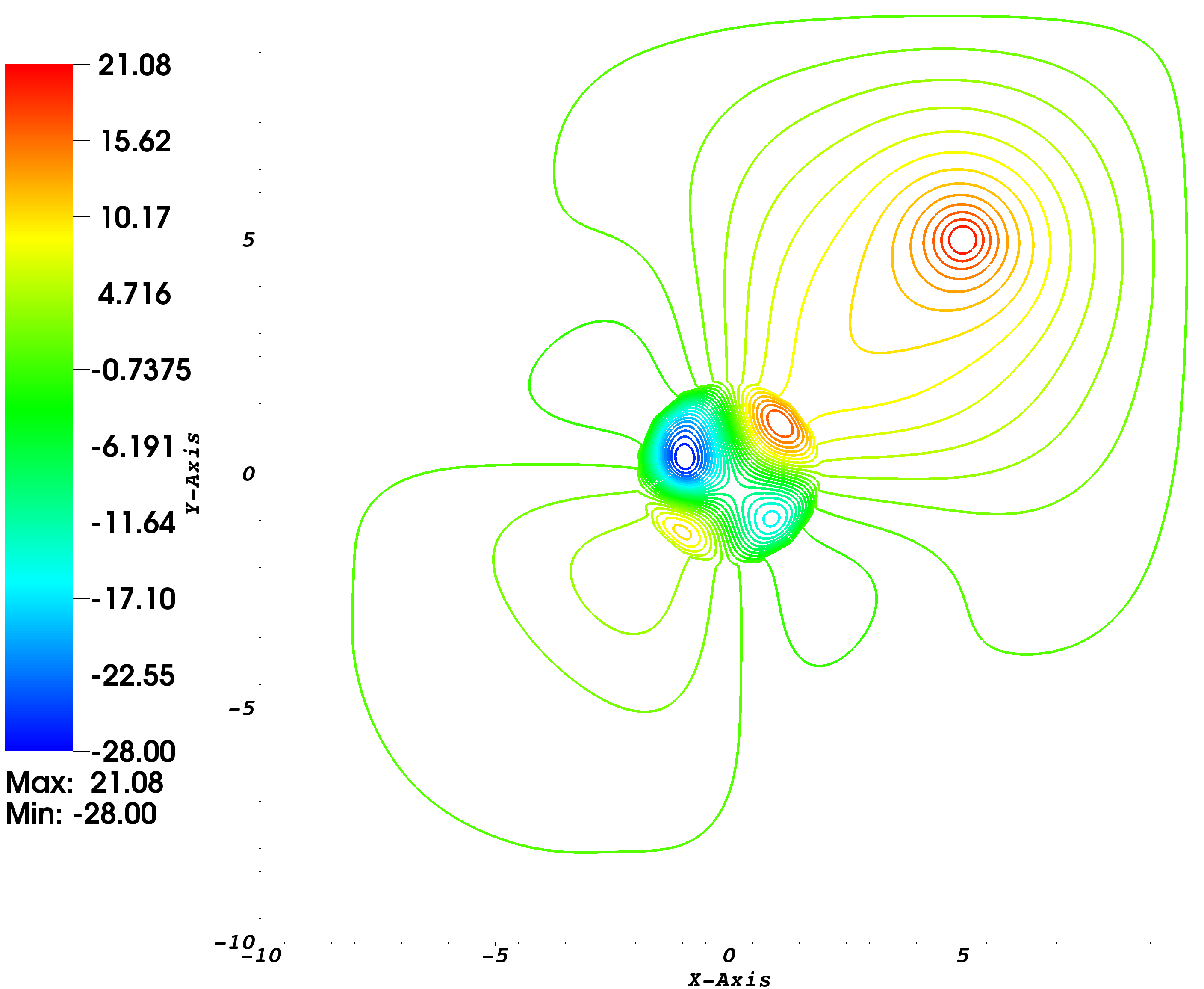}\vspace{-1ex}
        \caption{{\scriptsize Reference solution.}}
        \label{Reference_Solution_50_Iso_Lines0000}
    \end{minipage}%
    \begin{minipage}[t]{0.434\textwidth}
        \captionsetup{width=0.9\linewidth}
        \includegraphics[width=1\linewidth,valign=t]{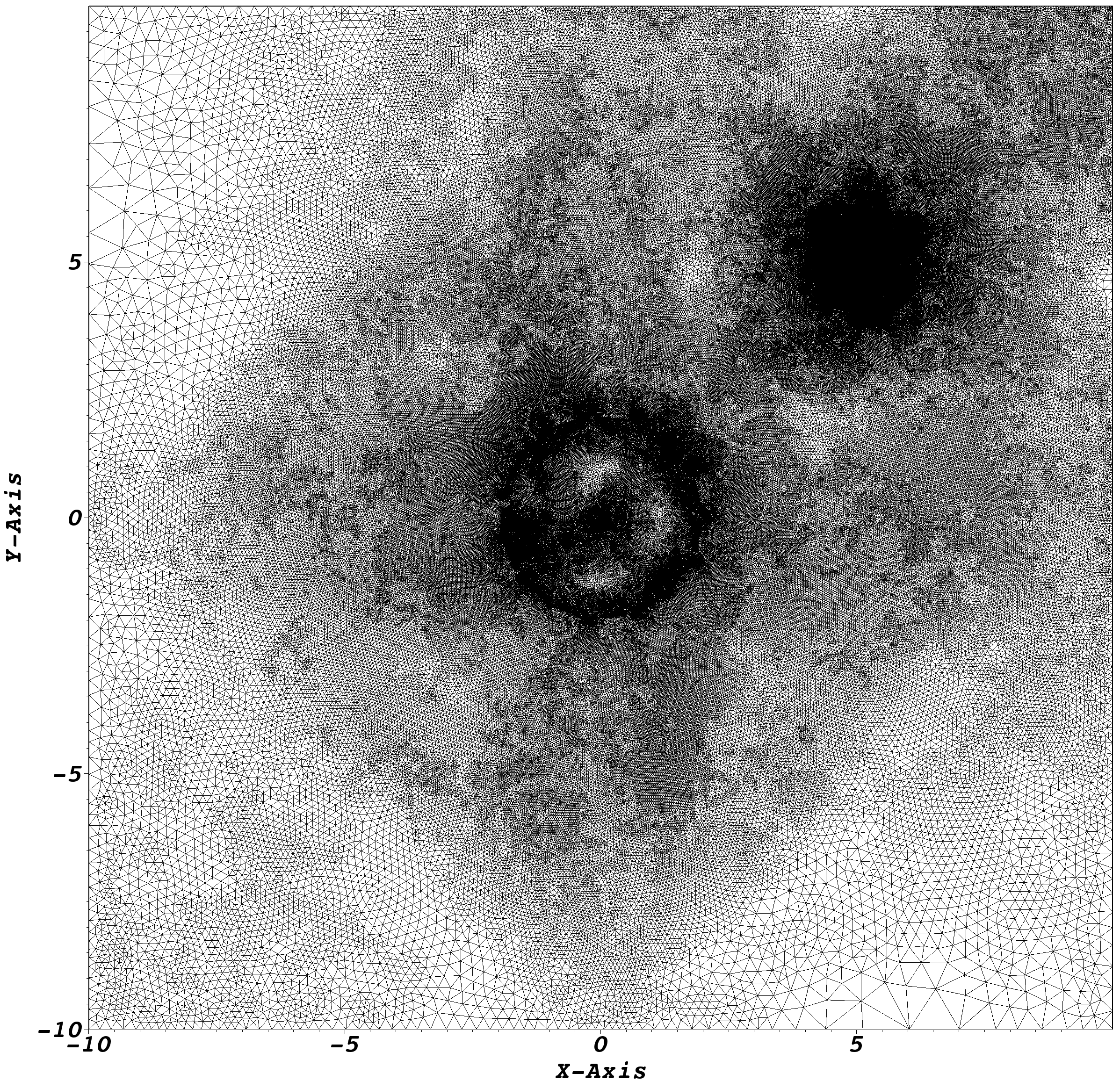}\vspace{-1ex}
        \caption{{\scriptsize Mesh with $555\,489$ elements, obtained by AMR using the error indicator $\vertiii{\epsilon\nabla v-y^*}_{*(O_i)}$ with
        flux equilibration for~$y^*$.}}
        \label{run26_2D_Th_Adapted_With_EpsGradvMinusyStar_Error_Indicator0012}
    \end{minipage}
\end{figure}
\vspace{1ex}

\subsection{Example 3 (3D)}

 In the third example, the computational domain $\Omega$ is a cube of side length $20$ Angstroms with
 a triangulated water molecule $\Omega_1$ in it. The diameter of the water molecule, which is positioned
 in the center of the cube, is about $2.75$ Angstroms. Its shape is not changed during the mesh adaptation
 process. The surface mesh of the water molecule is taken from~\cite{WaterMoleculeSurfaceMesh}. 
 Figure~\ref{Initial_Tetrahedral_Mesh_run17_3D} illustrates the initial tetrahedral mesh, which consists of
 $60\,222$ elements. It is generated using TetGen~\cite{TetGen} and adaptively refined with the help of
 mmg3d~\cite{mmg3d_dobrzynski}.
 Using the localized error indicator $\|\sqrt{2}\eta\|_{L^2(O_i)}$ computed on each vertex patch $O_i$ of the
 mesh, a new local mesh size at each vertex is defined by the formula 
 \begin{align*}
 h_i^{\text{new}}=h_i^{\text{old}}\left(\max\biggl\{\min\biggl\{\frac{\text{AM}\left\{\|\sqrt{2}\eta\|_{L^2(O_j)}\right\}}{\|\sqrt{2}\eta\|_{L^2(O_i)}},1 \biggr\}, 0.35 \biggr\}\right)
 \end{align*}
 and supplied to mmg3d, where $\text{AM}\left\{\|\sqrt{2}\eta\|_{L^2(O_j)}\right\}$ is the arithmetic mean of $\left\{\|\sqrt{2}\eta\|_{L^2(O_j)}\right\}$ over all vertex patches $O_j$. The coefficients $\epsilon$ and $k$ for this example are typical for electrostatic computations in biophysics using the PBE and are given by\\
\begin{minipage}{0.5\textwidth}
\begin{align*}%\label{definition_of_eps}
&\epsilon(x)=\left\{
\begin{aligned}
&\epsilon_1=2,\quad &x\in \Omega_1,\\
&\epsilon_2=80,\quad &x\in\Omega_2.
\end{aligned}
\right.
\end{align*}
\end{minipage}
\begin{minipage}{0.5\textwidth}
\begin{align*}%\label{definition_of_eps}
&k(x)=\left\{
\begin{aligned}
&k_1=0,\quad &x\in \Omega_1,\\
&k_2=0.84,\quad &x\in\Omega_2.
\end{aligned}
\right.
\end{align*}
\end{minipage}
Moreover, we assume that the problem is homogeneous, i.e., $l=0$, and
\begin{align*} 
&g=\exp\left(-b_1\left(\frac{|x-c_1|^2}{\sigma_1^2}-1\right)\right)-\exp\left(-b_2\left(\frac{|x-c_2|^2}{\sigma_2^2}-1\right)\right)\\
&+\exp\left(-b_3\left(\frac{|x-c_3|^2}{\sigma_3^2}-1\right)\right)+\exp\left(-b_4\left(\frac{|x-c_4|^2}{\sigma_4^2}-1\right)\right),
\end{align*}
where $b_1=b_2=b_3=b_4=2.3$, $c_1=(1,1,0)$, $c_2=(4,4,0)$,  $c_2=(0,6,0)$, $c_2=(-5,0,0)$,
 $\sigma_1=\sigma_2=\sigma_3=\sigma_4=2$.
 The reference solution $z_{h_{ref}}$ is computed on an adapted mesh with $79\,917\,007$ tetrahedrons.
 \begin{figure}[H]
    \centering
    \begin{minipage}[t]{0.45\textwidth}
          %\centering
        \captionsetup{width=0.9\linewidth}
        \includegraphics[width=1\linewidth,valign=t]{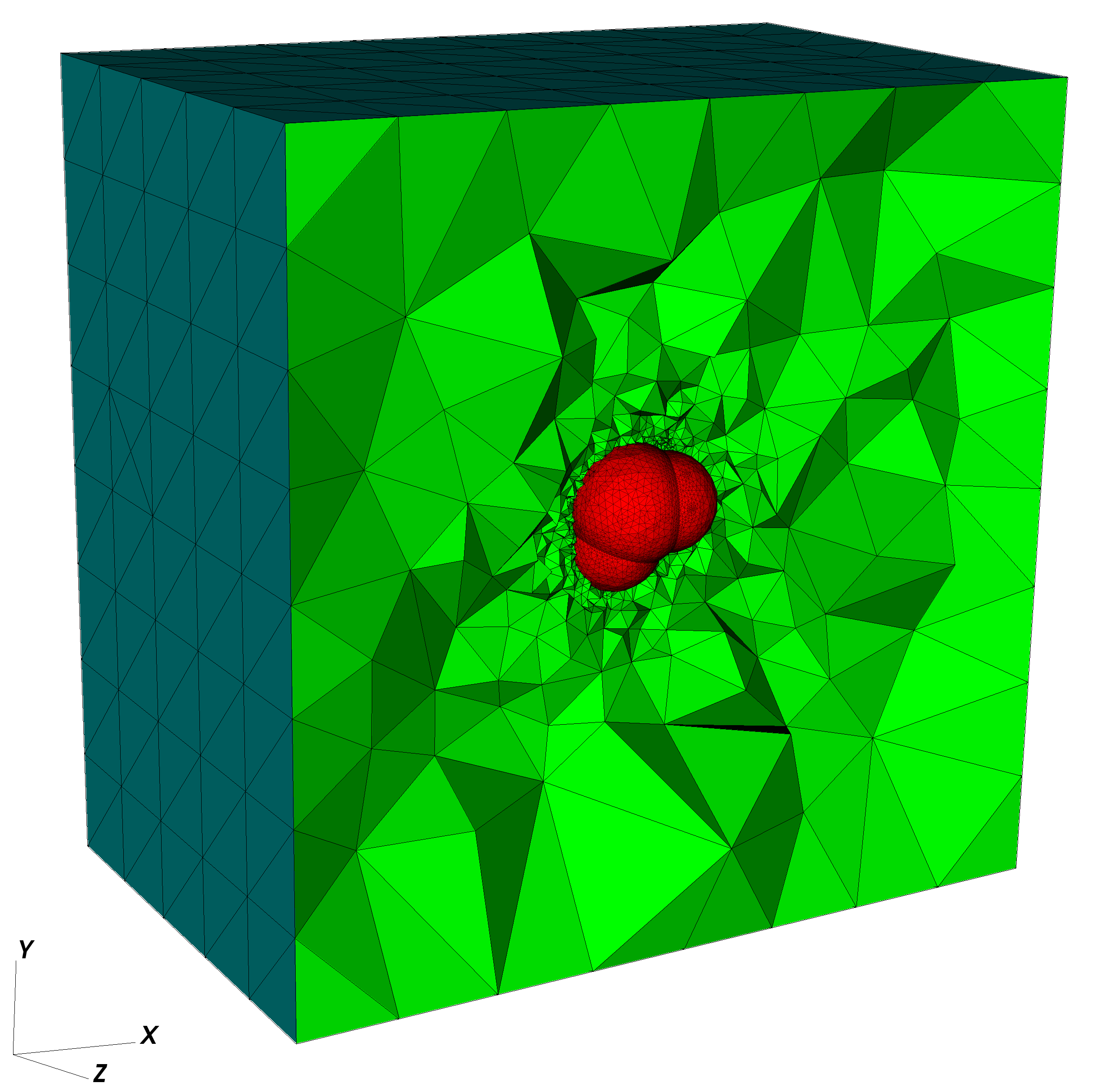}
        \caption{{\scriptsize Initial mesh in Example 3 consisting of $60\,222$ tetrahedrons.}}
        \label{Initial_Tetrahedral_Mesh_run17_3D}
    \end{minipage}%
    \begin{minipage}[t]{0.5\textwidth}
        %\centering
        \captionsetup{width=0.9\linewidth}
        \includegraphics[width=1\linewidth,valign=t]{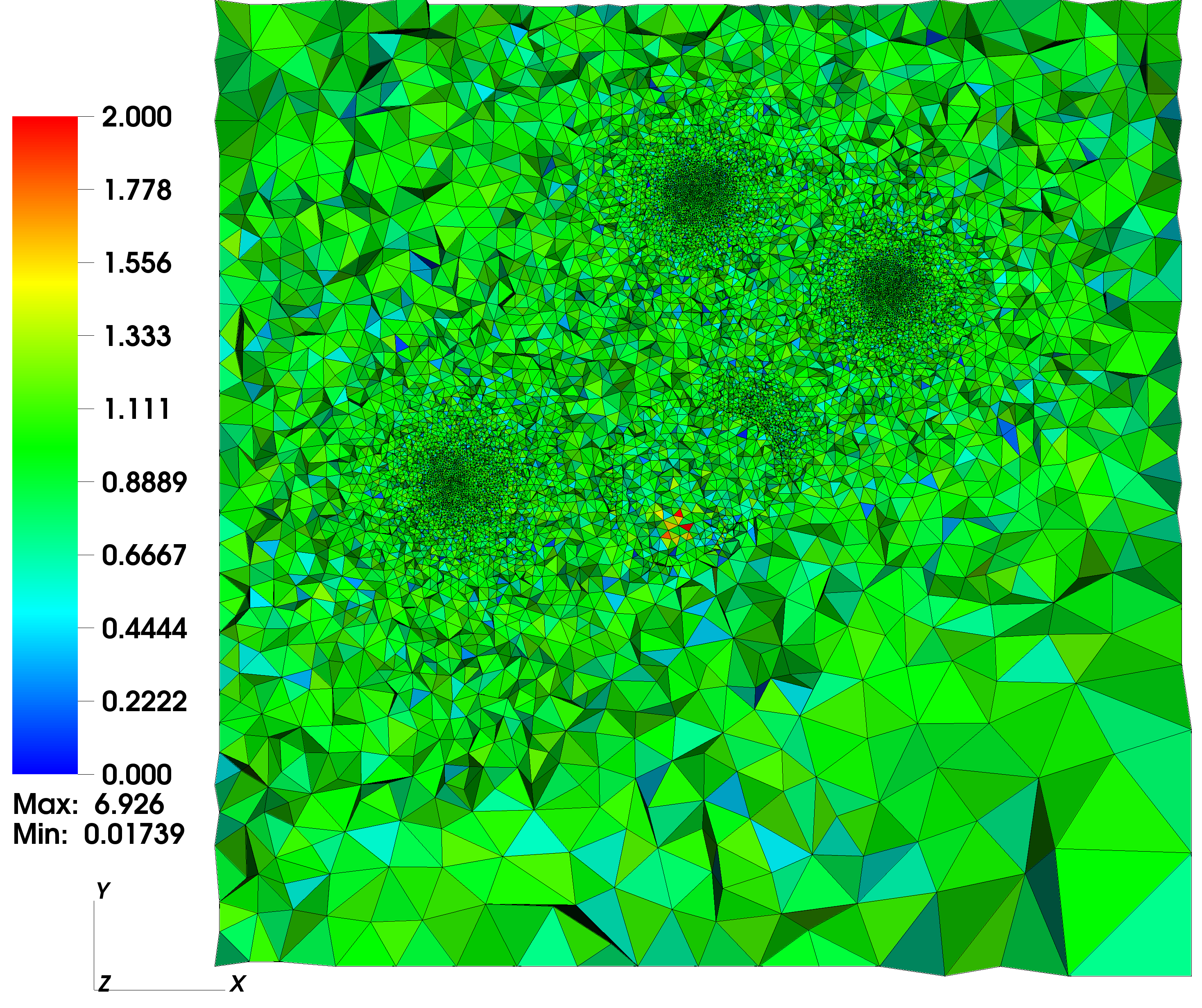}
        \caption{{\scriptsize Ratio of error indicator $\vertiii{\epsilon\nabla v-y^*}_*$
        and combined energy norm of the error, elementwise. Mesh on the $4$th level of AMR ($1.1736e+06$ elements)
        in Example 3 using the error indicator $\|\sqrt{2}\eta\|_{L^2(O_i)}$ with flux equilibration for~$y^*$.}}
        \label{run17_3D_Th_Adapted_With_Functional_Error_Indicator_Depicted_Ratio_EpsGradvMinusyStar_Error_Indicator_To_CEN_Error_Level}
    \end{minipage}
\end{figure}
 
 Since $l=0$ in $\Omega_1$ is a constant function, the patchwise reconstruction from \cite{Braess_Schoberl_2006}
 produces a flux $y^*$ with zero divergence in $\Omega_1$ and therefore the reliability of our majorant is guaranteed.
 In this example we achieve a very tight guaranteed bound on the error in combined energy norm, as well as in energy norm.
 The efficiency index $I_{\text{Eff}}^{\text{CEN,Up}}$ settles at around $1.05$ and the efficiency index $I_{\text{Eff}}^{\text{E,Up}}$
 decreases to $1.30$ (see Table~\ref{Example3_Table2}). This is in a good agreement with the fact that in this example the ratio
 $D_F(v,-\Lambda^*y^*)/M_\oplus^2(v,y^*)$ is well controlled and decreases to around $10\%$
 (see column 2 in Table~\ref{Example3_Table3}). We also note that in this example we obtained very similar results with the
 error indicator $\vertiii{\epsilon\nabla v-y^*}_{*(O_i)}$. For the efficiency index $I_{\text{Eff}}^{\text{CEN,Low}}$ of the lower
 bound on the combined energy norm of the error we obtain values converging to approximately $0.7071$ which is the
 approximate value of $\frac{\sqrt{2}}{2}$ (see column $3$ in Table~\ref{Example3_Table2}). This means that the combined
 energy norm of the error $\sqrt{\vertiii{\nabla(v-u)}^2+\vertiii{y^*-p^*}_*^2}$ is practically equal to $\vertiii{\epsilon\nabla v-y^*}_*$.
 
 Another consequence of this fact is the good accuracy of the practical estimation $P_{\text{rel}}^{\text{CEN}}$ of the relative error
 in combined energy norm (see columns $6$ and $7$ in Table~\ref{Example3_Table2}). The tight bounds on the error also enable
 us to compute tight and guaranteed upper bounds on the relative error in energy norm and combined energy norm as follows:
 \begin{subequations}\label{Sure_Upper_Bounds_For_Relative_Errors}
\begin{eqnarray}
\frac{\vertiii{\nabla(v-u)}}{\vertiii{\nabla u}}&\hspace{-3ex}&\leq\frac{\sqrt{2M_\oplus^2(v,y^*)}}{\vertiii{\nabla v}-\sqrt{2M_\oplus^2(v,y^*)}}
=:{\text{RE}}^{\text{Up}} \label{Sure_Upper_Bounds_For_Relative_Errors_1}\\
\frac{\sqrt{\vertiii{\nabla(v-u)}^2+\vertiii{y^*-p^*}_*^2}}{\sqrt{\vertiii{\nabla v}^2+\vertiii{y^*}_*^2}}&\hspace{-3ex}&\leq
\frac{\sqrt{2M_\oplus^2(v,y^*)}}{\sqrt{\vertiii{\nabla v}^2+\vertiii{y^*}_*^2}-\sqrt{2M_\oplus^2(v,y^*)}}=:
{\text{RCEN}}^{\text{Up}} \qquad \label{Sure_Upper_Bounds_For_Relative_Errors_2}
\end{eqnarray}
\end{subequations}
where \eqref{Sure_Upper_Bounds_For_Relative_Errors_1} is valid if $\vertiii{\nabla v}-\sqrt{2M_\oplus^2(v,y^*)}>0$
and \eqref{Sure_Upper_Bounds_For_Relative_Errors_2} is valid if $\sqrt{\vertiii{\nabla v}^2+\vertiii{y^*}_*^2}-\sqrt{2M_\oplus^2(v,y^*)}>0$.
%\begin{align}\label{Sure_Upper_Bounds_For_Relative_Errors}
%\begin{aligned}
% &\frac{\vertiii{\nabla(v-u)}}{\vertiii{\nabla u}}&\leq &\frac{\sqrt{2M_\oplus^2(v,y^*)}}{\vertiii{\nabla v}-\sqrt{2M_\oplus^2(v,y^*)}}=:RE^{Up}, \\
%&\frac{\sqrt{\vertiii{\nabla(v-u)}^2+\vertiii{y^*-p^*}_*^2}}{\sqrt{\vertiii{\nabla v}^2+\vertiii{y^*}_*^2}}&\leq& \frac{\sqrt{2M_\oplus^2(v,y^*)}}{\sqrt{\vertiii{\nabla v}^2+\vertiii{y^*}_*^2}-\sqrt{2M_\oplus^2(v,y^*)}}=:RCEN^{Up}.
%\end{aligned}
%\end{align}

 Similarly, we compute a tight and guaranteed lower bound for the relative error in combined energy norm by 
 \begin{align}\label{Sure_Lower_Bound_Combined_Energy_Norm_Error}
 \begin{aligned}
 {\text{RCEN}}^{\text{Low}}:=\frac{\frac{1}{\sqrt{2}}\vertiii{\epsilon\nabla v-y^*}_* }{\sqrt{\vertiii{\nabla v}^2+\vertiii{y^*}_*^2}+\sqrt{2M_\oplus^2(v,y^*)}}\leq \frac{\sqrt{\vertiii{\nabla(v-u)}^2+\vertiii{y^*-p^*}_*^2}}{\sqrt{\vertiii{\nabla v}^2+\vertiii{y^*}_*^2}}.
 \end{aligned}
 \end{align}
 \vspace{1ex}
 
{\small
\begin{center}
\captionof{table}{Example 3 (3D)} \label{Example3_Table1} 
\vspace{1ex}
%\begin{tabular}{ |p{1.9cm}|p{1.8cm}|p{2.cm}|p{2.1cm}|p{2cm}|p{2cm}|p{2cm}| }
\begin{tabular}{ |c|c|c|c|c|c|c| }
\hline
\multicolumn{7}{|c|}{Example 3 (3D): $k_1=0,\,k_2=0.84,\,\epsilon_1=2,\,\epsilon_2=80$} \\
\hline
\#elts &$\frac{\|v-u\|_0}{\|u\|_0}[\%]$ &$\frac{\vertiii{\nabla(v-u)}}{\vertiii{\nabla u}}[\%]$ & $\frac{\vertiii{y^*-p^*}_*}{\vertiii{p^*}_*}[\%]$ &
$2M_\oplus^2(v,y^*)$& $2M_\oplus^2(v,p^*)$ &$2M_\oplus^2(u,y^*)$\\
\hline
60222 		     		 &76.8320 	 	 &108.015 	&167.589			&425569		&117373 	     &308196\\
103236 		      &11.9257       &46.3306 	     &55.1210			&47104.5		&17845.0	&29259.5\\
222118 		      &1.09233       &17.7353 		&14.9578			&4484.44		&2224,69	&2259.75 \\
552936     	      &0.49820     &8.67222 		&7.09062			&965.067		&513.706		&451.361\\
1.1736e+06         &0.25609      &6.58075 	&5.33661 		     &539.734		&295.254		&244.481\\
2.05668e+06       &0.17094 	      &5.37625 		&4.18207			&350.648		&197.016 	&153.631\\
2.97315e+06       &0.12317     &4.73466  	&3.53852			&265.167		&152.783		&112.385\\
3.90692e+06       &0.10071     &4.32886 		&3.12966			&216.336		&127.703		&88.6336\\
\hline
\end{tabular}
\end{center}
}

{\small
\begin{center}
\captionof{table}{Example 3 (3D)} \label{Example3_Table2} 
%\begin{tabular}{ | p{1.9cm}|p{2.cm}|p{2.cm}|p{2.6cm}|p{2.6cm}| }
\vspace{1ex}
\begin{tabular}{ |c|c|c|c|c| }
\hline
\multicolumn{5}{|c|}{Example 3 (3D): $k_1=0,\,k_2=0.84,\,\epsilon_1=2,\,\epsilon_2=80$} \\
\hline
\#elts & $\vertiii{\nabla(v-u)}^2$ &$\vertiii{y^*-p^*}_*^2$ & $2D_F(v,-\Lambda^*p^*)$ &$2D_F(u,-\Lambda^*y^*)$\\
\hline
60222 		   		&79487.0 	    &191346 			&37886.0		&116850		\\
103236 		     &14623.9      &20699.7 	     		&3221.12		&8559.78		\\
222118 		     &2142.92      &1524.28			&81.7757		&735.474		\\
552936     	     &512.376	    &342.528			&1.32980		&108.833		\\
1.1736e+06        &295.039      &194.026 	 		&0.21458 		&50.455		\\
2.05668e+06      &196.919 	    &119.155 			&0.09743		&34.4762		\\
2.97315e+06      &152.724	    &85.3044 			&0.05857		&27.0805		\\
3.90692e+06      &127.666 	    &66.7303			&0.03663  	&21.9033		\\
\hline
\end{tabular}
\end{center}
}

In Table~\ref{Example3_Table4} we show the computed
by~\eqref{Sure_Upper_Bounds_For_Relative_Errors} and \eqref{Sure_Lower_Bound_Combined_Energy_Norm_Error}
guaranteed bounds on the relative errors.

{\small
\begin{center}
\captionof{table}{Example 3 (3D)} \label{Example3_Table3} 
\vspace{1ex}
%\begin{tabular}{ | p{1.9cm}|p{2.2cm}|p{1.7cm}|p{1.6cm}|p{1.6cm}|p{1.6cm}|p{2.4cm}| }
\begin{tabular}{ |c|c|c|c|c|c|c| }
\hline
\multicolumn{7}{|c|}{Example 3 (3D): $k_1=0,\,k_2=0.84,\,\epsilon_1=2,\,\epsilon_2=80$} \\
\hline
\#elts & $\frac{D_F(v,-\Lambda^*y^*)}{M_\oplus^2(v,y^*)}[\%]$ & $I_{\text{Eff}}^{\text{CEN,Low}}$ &
$I_{\text{Eff}}^{\text{CEN,Up}}$ & $I_{\text{Eff}}^{\text{E,Up}}$ & $P^{\text{CEN}}_{\text{rel}}~[\%]$ &
\parbox{2.4cm}{True rel. error in $\text{CEN}~[\%]$}\\
\hline
60222 		     		 &40.0541     &0.68627 		&1.25353			&2.31386		&92.8434	&140.985	\\
103236 		      &20.4500     &0.72828 	     &1.15478			&1.79473		&47.6870	&50.9159	\\
222118 		      &16.1172     &0.71615		&1.10583			&1.44661		&16.4040	&16.4054	\\
552936     	      &11.2249	    &0.70786 		&1.06248			&1.37241		&7.90966	&7.92099	\\
1.1736e+06         &9.33477     &0.70731		&1.05053		     &1.35254		&5.98505	&5.99106	\\
2.05668e+06       &9.82289	    &0.70725 		&1.05327			&1.33442		&4.81343	&4.81632	\\
2.97315e+06       &10.2057	    &0.70722 		&1.05547			&1.31767		&4.17784	&4.17960	\\
3.90692e+06       &10.1194 	    &0.70719		&1.05492			&1.30175		&3.77592	&3.77716	\\
\hline
\end{tabular}
\end{center}
}

{\small
\begin{center}
\captionof{table}{Example 3 (3D)} \label{Example3_Table4} 
\vspace{1ex}
%\begin{tabular}{ | p{2.2cm}|p{2.2cm}|p{2.2cm}|p{2.2cm}|}
\begin{tabular}{ |c|c|c|c|}
\hline
\multicolumn{4}{|c|}{Example 3 (3D): $k_1=0,\,k_2=0.84,\,\epsilon_1=2,\,\epsilon_2=80$} \\
\hline
\#elts & ${\text{RCEN}}^{\text{Low}}[\%]$ &${\text{RCEN}}^{\text{Up}}[\%]$ & ${\text{RE}}^{\text{Up}}[\%]$\\
\hline
60222 		     		 &26.8329     &2480.32 		&-					\\
103236 		      &20.9158     &98.4934 	     &310.049			\\
222118 		      &9.76945     &21.6027		&33.9219			\\
552936     	      &5.14869	    &9.14078 		&13.4714			\\
1.1736e+06         &3.97619     &6.69650 		&9.75647		     \\
2.05668e+06       &3.23651	    &5.33417 		&7.72193			\\
2.97315e+06       &2.82755	    &4.60886 		&6.64970			\\
3.90692e+06       &2.56630 	    &4.14555		&5.96873			\\
\hline
\end{tabular}
\end{center}
}
\vspace{2ex}

As a remark, we note that the efficiency indexes with respect to the energy and combined energy norms
of the error can be improved if we use a flux reconstruction in a bigger space, say $RT_1$, which has better
approximation properties. In this way the error in $\div y^*$ will decrease and as a result,
the term $D_F(v,-\Lambda^*y^*)$ and consequently the dual part of the error
$2M_\oplus^2(u,y^*)=\vertiii{y^*-p^*}_*^2+D_F(u,-\Lambda^*y^*)$ will constitute a smaller part of the whole
majorant and the error, respectively. Even better, we can minimize the majorant with respect to $y^*$
in a subspace of $H(\div;\Omega)$ like $RT_0$, possibly on another finner mesh. Note that in the
limit case we have
$\inf\limits_{y^*\in H(\div;\Omega)}M_\oplus^2(v,y^*)=M_\oplus^2(v,p^*)=\frac{1}{2}\vertiii{\nabla(v-u)}^2+D_F(v,-\Lambda^*p^*)$
and the dual error completely vanishes. In this case, 
\begin{align*}
I_{\text{Eff}}^{\text{CEN,Up}}=I_{\text{Eff}}^{\text{E}}=\frac{\sqrt{2M_\oplus^2(v,p^*)}}{\vertiii{\nabla(v-u)}}=\frac{\sqrt{\vertiii{\nabla(v-u)}^2+2D_F(v,-\Lambda^*p^*)}}{\vertiii{\nabla(v-u)}}
\end{align*}
where the last ratio tends to $1$ because by
\eqref{Upper_Quadratic_Bound_for_DFvMinusLambdaStarpStar} the term
$D_F(v,-\Lambda^*p^*)\sim \|v-u\|_{L^2(\Omega)}^2$
and has a higher order of convergence than $\vertiii{\nabla(v-u)}^2$.
In practice, we can minimize the majorant with respect to $y^*$ only once on
a sufficiently big subspace of $H(\div;\Omega)$ to find some good approximation
$\overline{y}^*$ of $p^*$ and then reuse this $\overline{y}^*$ and obtain guaranteed
and very tight bounds on the error in energy and combined energy norm.
To illustrate these ideas, for the first example we recomputed the value of the
majorant $M_\oplus^2(v,y^*)$ on all mesh levels (sequence of meshes is the
same one from Tables \ref{Example1_Table1}, \ref{Example1_Table2}, \ref{Example1_Table3})
using the flux $\overline y^*$ that we obtained through the patchwise  reconstruction with equilibration
at the last level, $11$, where the mesh consists of $386\,185$ elements.
This $\overline y^*$ gives a very good approximation to the exact flux $p^*$ and thus the error
in $\div \overline y^*$ at all adaptation levels before level $11$ is much smaller relative to the error
measured in energy or combined energy norm.
As a consequence, the ratio $D_F(v,-\Lambda^*\overline y^*)/M_\oplus^2(v,\overline y^*)$ is small
and increases from around $4\%$ to its final value of $73 \%$ at level $11$. The respective efficiency
indexes with respect to the energy and combined energy norms are given in Table \ref{Example1_Table7}.
This time, the majorant $M_\oplus^2(v,\overline y^*)$ gives a much tighter bound on the error in energy
and combined energy norm and the efficiency indexes increase from around $1$ to their final values at
level $11$ of $3.3889$ and $1.9206$, respectively. 
{\small
\begin{center}
\captionof{table}{Example 1 (2D)} \label{Example1_Table7} 
\vspace{1ex}
%\begin{tabular}{ | p{1.9cm}|p{2.2cm}|p{1.7cm}|p{1.6cm}|p{1.6cm}|p{1.6cm}|p{2.4cm}| }
\begin{tabular}{ |c|c|c|c|c|c|c| }
\hline
\multicolumn{7}{|c|}{Example 1 (2D): $k_1=0.15,\,k_2=0.4,\,\epsilon_1=1,\,\epsilon_2=100$} \\
\hline
\# elements & $\frac{D_F(v,-\Lambda^*y^*)}{M_\oplus^2(v,y^*)}[\%]$ & $I_{\text{Eff}}^{\text{CEN,Low}}$ &
$I_{\text{Eff}}^{\text{CEN,Up}}$ & $I_{\text{Eff}}^{\text{E,Up}}$ & $P^{\text{CEN}}_{\text{rel}}~[\%]$ &
\parbox{2.4cm}{True rel. error in $\text{CEN}~[\%]$}\\
\hline
196 		   &15.8135  	 &0.70520 	&1.08700			&1.08740		&38.5074	&36.4137	\\
347 		   &3.61970     &0.70640 	&1.01760			&1.01870		&22.1410	&21.8386	\\
630 		   &3.24520     &0.70650		&1.01570			&1.01800		&15.5098	&15.4285	\\
1315     	   &2.99700 	 &0.70980 	&1.01930			&1.02350		&11.3338	&11.2565	\\
2865            &5.11630     &0.71080 	&1.03190		     &1.03970		&8.41663	&8.36086	\\
5938            &9.91240	 &0.71310 	&1.06250			&1.08000		&5.75210	&5.69982	\\
12006          &19.9535	 &0.70580 	&1.11560			&1.15160		&4.11607	&4.12246	\\
24571          &35.1659 	 &0.69030		&1.21230			&1.29130		&2.89890	&2.96931	\\
48483          &45.0879     &0.70940 	&1.35380			&1.52340		&2.23724	&2.23000	\\
97423          &59.5529 	 &0.69360		&1.54240			&1.91030		&1.69993	&1.73298	\\
192905        &68.6293	 &0.69130 	&1.74560			&2.51110		&1.39059	&1.42237	\\
386185        &73.0132  	 &0.70550 	&1.92060			&3.38890		&1.23821	&1.24105	\\
\hline
\end{tabular}
\end{center}
}    
\vspace{2ex}
        
\section{Conclusions}

We proved the existence and uniqueness of a solution $u$ of the nonlinear elliptic
Problem~\eqref{PBE_special_form_regular_nonlinear_part}, which appears in context
of solving the nonlinear PBE numerically by two- or three-term regularization.
We further proved an $L^\infty(\Omega)$ a priori bound on the (regular component of the)
solution $u$ (of the PBE), established an analogue of Cea's lemma,
cf.~\eqref{quasi_optimal_a_priori_error_estimate_1}, and used it to prove unqualified
convergence of the $P_1$ Lagrange FEM under uniform mesh refinement.

As a main result we derived the identity~\eqref{Explicit_form_of_Functional_error_equality} by finding the
explicit form of the terms in the abstract relations~\eqref{Func_a_posteriori_estimate_with_forcing_functionals}
and~\eqref{Functional_error_equality}. It defines a natural error measure for the considered class of problems
and is the basis for fully computable guaranteed tight bounds on the global errors (see Table~\ref{Example3_Table4}).

A big advantage of our approach is that it can be used for any conformal approximation ($P_1$, $P_2$, IGA,...)
and that there are no local or global constants present in the estimates for the error in energy and combined enery
norm (CEN). Demonstrated by our theoretical findings as well as by the presented numerical tests, good efficiency
indexes/tight bounds on the errors, require a flux reconstruction with equilibration.
The key factor that determines the efficiency index is the ratio $\frac{D_F(v,-\Lambda^*y^*)}{M_\oplus^2(v,y^*)}$.

Assuming that
%the integral $\int\limits_{\Omega}{(y^*- p^*)\cdot \nabla (v-u) dx}$
%in~\eqref{relation_DFs_and_integral_term_in_Prager_Synge_extended_equality_2} is close to zero (as it is
%in all of our experiments) one gets
$$D_F(v,-\Lambda^*y^*)\approx D_F(v,-\Lambda^* p^*)+D_F( u,-\Lambda^*y^*),$$
which means that the last term in~\eqref{relation_DFs_and_integral_term_in_Prager_Synge_extended_equality_2}
is close to zero, we obtain from~\eqref{Explicit_form_of_Functional_error_equality} the estimate
%a practical estimation of the efficiency index $I_{\text{Eff}}^{\text{CEN,Up}}$:
$$
I_{\text{Eff}}^{\text{CEN,Up}} \approx \frac{1}{\sqrt{1-\frac{D_F(v,-\Lambda^*y^*)}{M_\oplus^2(v,y^*)}}}.
$$
From what we observed, the efficiency index $I_{\text{Eff}}^{\text{E,Up}}$ with respect to the energy norm usually is
no more than twice bigger than $I_{\text{Eff}}^{\text{CEN,Up}}$ (assuming we have a good approximation $y^*$ to $p^*$). 
Therefore, if during the computations we detect that this ratio is 
increasing
%high and respectively our estimation for $I_{\text{Eff}}^{\text{E,Up}}$ is also high, then
we can apply the so-called estimation with one step delay, i.e compute the value of the majorant $M_\oplus^2(v,y^*)$
for the current mesh level with the reconstructed $y^*$ from the next level.
%In this way we obtain a better idea of the real error.
Another strategy is to find somehow a good approximation $\bar y^*$ of $p^*$ and reuse it on several AMR levels
(for example by means of solving the dual problem $P^*$--maximizing $I^*$ on possibly another mesh).
We also conclude that gradient averaging is not appropriate for obtaining good efficiency indexes and that it tends
to overrefine the mesh around the interface.

\bibliographystyle{plain}
\bibliography{references}

\begin{thebibliography}{10}

\bibitem{WaterMoleculeSurfaceMesh}
A collection of molecular surface meshes.
\newblock
  \url{https://www.rocq.inria.fr/gamma/gamma/download/affichage.php?dir=MOLECULE&name=water_mol&last_page=6}.
\newblock Accessed: 2017-08-18.

\bibitem{Best_constants_in_some_exponential_Sobolev_inequalities}
{B. Kawohl, M. Lucia}.
\newblock Best constants in some exponential {S}obolev inequalities.
\newblock {\em Indiana University Mathematics Journal}, 57(4):1907--1928, 2008.

\bibitem{CC_Praetorius_2014}
M.~Page D.~Praetorius C.~Carstensen, M.~Feischl.
\newblock Axioms of adaptivity.
\newblock {\em Comput. Math. Appl.}, 67(6):1195--1253, 2014.

\bibitem{VisIt}
Hank Childs, Eric Brugger, Brad Whitlock, Jeremy Meredith, Sean Ahern, David
  Pugmire, Kathleen Biagas, Mark Miller, Cyrus Harrison, Gunther~H. Weber, Hari
  Krishnan, Thomas Fogal, Allen Sanderson, Christoph Garth, E.~Wes Bethel,
  David Camp, Oliver R\"{u}bel, Marc Durant, Jean~M. Favre, and Paul
  Navr\'{a}til.
\newblock {VisIt: An End-User Tool For Visualizing and Analyzing Very Large
  Data}.
\newblock In {\em {High Performance Visualization--Enabling Extreme-Scale
  Scientific Insight}}, pages 357--372. Oct 2012.

\bibitem{Braess_Schoberl_2006}
{D. Braess, J. Sch{\"o}berl}.
\newblock Equilibrated residual error estimator for {M}axwell's equations.
\newblock {\em RICAM report}, 2006.

\bibitem{Kinderlehrer_Stampacchia}
{D. Kinderlehrer, G. Stampacchia}.
\newblock {\em An Introduction to Variational Inequalities and Their
  Applications}.
\newblock SIAM, 2000.

\bibitem{mmg3d_dobrzynski}
Cecile Dobrzynski.
\newblock {MMG3D: User Guide}.
\newblock Technical Report RT-0422, {INRIA}, March 2012.

\bibitem{Fogolari_Brigo_Molinari_2002}
{F. Fogolari, A. Brigo, H. Molinari}.
\newblock The {P}oisson-{B}oltzmann equation for biomolecular electrostatics: a
  tool for structural biology.
\newblock {\em J. Mol. Recognit.}, 15:377--392, 2002.

\bibitem{Fogolari_Zuccato_Esposito_Viglino_1999}
{F. Fogolari, P. Zuccato, G. Esposito, P. Viglino}.
\newblock Biomolecular electrostatics with the linearized {P}oisson-{B}oltzmann
  equation.
\newblock {\em Biophysical Journal}, 76:1--16, 1999.

\bibitem{Stampacchia_1965}
{G. Stampacchia}.
\newblock Le probl{\`e}me de {D}irichlet pour les {\'e}quations elliptiques du
  second ordre {\`a} coefficients discontinus.
\newblock {\em Annales de l'institut Fourier}, 15(1):189--257, 1965.

\bibitem{Brezis_FA}
{H. Br{\'e}zis}.
\newblock {\em Functional Analysis, {S}obolev Spaces and Partial Differential
  Equations}.
\newblock Springer, 2011.

\bibitem{Brezis_Browder_1978_One_Property_of_Sobolev_Spaces}
{H. Br{\'e}zis, F. Browder}.
\newblock Sur une propri{\'e}t{\'e} des espaces de {S}obolev.
\newblock {\em C. R. Acad. Sc. Paris}, 287:113--115, 1978.

\bibitem{Oberoi_Allewell_1993}
{H. Oberoi, N. M. Allewell}.
\newblock Multigrid solution of the nonlinear {P}oisson-{B}oltzmann equation
  and calculation of titration curves.
\newblock {\em Biophysical Journal}, 65:48--55, 1993.

\bibitem{FreeFem}
F.~Hecht.
\newblock New development in {F}ree{F}em++.
\newblock {\em J. Numer. Math.}, 20(3-4):251--265, 2012.

\bibitem{Ekeland_Temam}
{I. Ekeland, R. Temam}.
\newblock {\em Convex Analysis and Variational Problems}.
\newblock North-Holland Publishing Company, 1976.

\bibitem{Knapp_Schoberl_2014}
{I. Sakalli, J. Sch{\"o}berl, E. W. Knapp}.
\newblock {mFES: A Robust Molecular Finite Element Solver for Electrostatic
  Energy Computations}.
\newblock {\em J. Chem. Theory Comput.}, 10:5095--5112, 2014.

\bibitem{Sharp_Honig_1990}
{K. A. Sharp, B. Honig}.
\newblock Calculating total electrostatic energies with the nonlinear
  {P}oisson-{B}oltzmann equation.
\newblock {\em J. Phys. Chem}, 94:7684--7692, 1990.

\bibitem{Chen2006b}
{Long Chen, Michael J. Holst, Jinchao Xu}.
\newblock Adaptive finite element modeling techniques for the
  {P}oisson-{B}oltzmann equation.
\newblock {\em Siam J. Numer. Anal.}, 45(6):2298--2320, 2007.

\bibitem{Praetorius_Zee_2016}
K.~G. van der~Zee M.~Feischl, D.~Praetorius.
\newblock An abstract analysis of optimal goal-oriented adaptivity.
\newblock {\em SIAM J. Numer. Anal.}, 54(3):1423--1448, 2016.

\bibitem{Holst2012}
{M. Holst, J.A. McCammon, Z. Yu, Y. C. Zhou, Y. Zhu}.
\newblock Adaptive finite element modeling techniques for the
  {P}oisson-{B}oltzmann equation.
\newblock {\em Commun. Comput. Phys.}, 11:179--214, 2012.

\bibitem{Repin}
{P. Neittaanmaki, S. Repin}.
\newblock {\em Reliable Methods for Computer Simulation: Error Control and
  Posteriori Estimates}.
\newblock Elsevier, 2004.

\bibitem{Showalter}
{R. E. Showalter}.
\newblock {\em {H}ilbert Space Methods for Partial Differential Equations}.
\newblock Courier Corporation, 2010.

\bibitem{Repin_noninear_var_problems_2012}
S.~Repin.
\newblock On measures of errors for nonlinear variational problems.
\newblock {\em Russian J. Numer. Anal. Math. Modelling}, 27(6):577--584, 2012.

\bibitem{Kesavan}
{S. Kesavan}.
\newblock {\em Topics in Functional Analysis and Applications}.
\newblock New Age International (P) Limited, 1989.

\bibitem{Repin_2000}
{S. Repin}.
\newblock A posteriori error estimation for variational problems with uniformly
  convex functionals.
\newblock {\em Math. Comp}, 69:481--500, 2000.

\bibitem{Repin_Valdman_2017}
J.~Valdman S.~Repin.
\newblock Error identities for variational problems with obstacles.
\newblock {\em Z. Angew. Math. Mech.}, pages 1--24, 2017.

\bibitem{TetGen}
H.~Si.
\newblock {T}et{G}en, a {D}elaunay-based quality tetrahedral mesh generator.
\newblock {\em ACM Transactions on Mathematical Software (TOMS)}, 41(11), 2015.

\end{thebibliography}
\end{document}